\DeclareMathOperator{\diam}{diam}
\DeclareMathOperator*{\argmin}{arg\,min}
\newcommand{\commentout}[1]{}
\newcommand{\ra}[1]{\renewcommand{\arraystretch}{#1}}
\newcommand{\N}{\mathbb{N}}
\newcommand{\Z}{\mathbb{Z}}
\newcommand{\R}{\mathbb{R}}
\let\S\relax \newcommand{\S}{\mathbb{S}}
\newcommand{\spn}{\operatorname{span}}
\newcommand{\supp}{\operatorname{supp}}
\let\P\relax\newcommand{\P}{\mathbb{P}}
\newcommand{\E}{\mathbb{E}}
\newcommand{\var}{\mathrm{Var}}
\newcommand{\Var}{\mathrm{Var}}
\newcommand{\cov}{\mathrm{Cov}}
\newcommand{\Cov}{\mathrm{Cov}}
\newcommand{\cc}{\mathcal{C}}
\newcommand{\mm}{\mathcal{M}}
\newcommand{\la}{\lambda}
\newcommand{\eps}{\varepsilon}
\newcommand{\kk}{\mathcal{K}}
\newcommand{\K}[1]{\kk_{#1}}
\newcommand{\hh}{\mathcal{H}}
\newcommand{\bb}{\mathcal{B}}
\newcommand{\hrho}{\widehat{\rho}}
\newcommand{\Ijkv}{{I_{j,k|v}}}
\newcommand{\Ijk}[3]{{I_{#1,#2|#3}}}
\newcommand{\hfjkv}{\widehat{f}_{j,k|v}}
\newcommand{\hfjv}{\widehat{f}_{j|v}}
\newcommand{\yjkv}{{\overline{{y}}}_{j,k|v}}
\newcommand{\hyjkv}{\widehat{{y}}_{j,k|v}}
\newcommand{\hcovjkv}{\widehat{q}_{j,k|v}}
\newcommand{\hvarjkv}{\widehat{s}_{j,k|v}}
\newcommand{\xjk}[3]{{\overline{x}_{#1,#2|#3}}}
\newcommand{\xjkv}{\overline{x}_{j,k|v}}
\newcommand{\hxjkv}{\widehat{x}_{j,k|v}}
\newcommand{\hmjkv}{\widehat{\beta}_{j,k|v}}
\newcommand{\hv}{{\widehat{v}}}
\newcommand{\hf}{\widehat{f}}
\newcommand{\Ijkhv}{{I_{j,k|\hv}}}
\newcommand{\hfjkhv}{\widehat{f}_{j,k|\hv}}
\newcommand{\hfjhv}{\widehat{f}_{j|\hv}}
\newcommand{\hFjhv}{\widehat{F}_{j|\hv}}
\newcommand{\hFjhvl}{\widehat{F}_{j|\hvc\el}}
\newcommand{\yjkhv}{{\overline{{y}}}_{j,k|\hv}}
\newcommand{\hyjkhv}{\widehat{{y}}_{j,k|\hv}}
\newcommand{\hcovjkhv}{\widehat{q}_{j,k|\hv}}
\newcommand{\hvarjkhv}{\widehat{s}_{j,k|\hv}}
\newcommand{\xjkhv}{\overline{x}_{j,k|\hv}}
\newcommand{\hxjkhv}{\widehat{x}_{j,k|\hv}}
\newcommand{\mujk}[2]{{\mu}_{#1,#2}}
\newcommand{\hmu}{{\widehat{\mu}}}
\newcommand{\hmujk}[2]{{\hmu}_{#1,#2}}
\newcommand{\hmjkhv}{\widehat{\beta}_{j,k|\hv}}
\newcommand{\hvl}{{\hv}_l}
\newcommand{\hvjk}[2]{{\hv}_{#1,#2}}
\newcommand{\vjk}[2]{{v}_{#1,#2}}
\newcommand{\II}{\mathbbm{1}}
\newcommand{\bIjkl}[3]{{E_{#1,#2|#3}}}
\newcommand{\hfjkl}{\widehat{f}_{j,k|l}}
\newcommand{\hfjl}{\widehat{f}_{j|l}}
\newcommand{\hFjl}{\widehat{F}_{j|l}}
\newcommand{\fjv}{f_{j|v}}
\newcommand{\ar}{{r}}
\newcommand{\Si}{{\Sigma}}
\newcommand{\hSi}{\widehat{\Si}}
\newcommand{\tSi}{\widetilde{\Si}}
\newcommand{\tSijk}[2]{{\tSi_{#1,#2}}}
\let\j\relax\newcommand{\j}{{j}}
\let\k\relax\newcommand{\k}{{k}}
\newcommand{\el}{{l}}
\newcommand{\h}{{h}}
\renewcommand{\L}{{L}}
\newcommand{\Ci}{{S}}
\newcommand{\Cijk}[2]{{{\Ci}_{#1,#2}}}
\newcommand{\hSijk}[2]{{\hSi}_{#1,#2}}
\newcommand{\Sijk}[2]{{\Si}_{#1,#2}}
\newcommand{\hM}{{\widehat{M}}}
\newcommand{\hMc}[1]{{\hM}_{#1}}
\newcommand{\Mc}[1]{{M}_{#1}}
\newcommand{\hV}{{\widehat{V}}}
\newcommand{\hVc}[1]{{\hV}_{#1}}
\newcommand{\hvc}[1]{{\hv}_{#1}}
\newcommand\Item[1][]{%
  \ifx\relax#1\relax  \item \else \item[#1] \fi
  \abovedisplayskip=0pt\abovedisplayshortskip=0pt~\vspace*{-1.7\baselineskip}}
\newcommand{\myitem}[1]{%
\item[#1]\protected@edef\@currentlabel{#1}%
}
\newcommand{\new}[1]{#1}
\newcommand{\newnew}[1]{#1}
\theoremstyle{definition}
\theoremstyle{remark}
\theoremstyle{plain}
\newtheorem{lem}{Lemma}
\newtheorem{prop}{Proposition}
\newtheorem*{prop*}{Proposition}
\newtheorem{thm}{Theorem}
\newtheorem*{thm*}{Theorem}
\begin{document}

\begin{frontmatter}

\title{Conditional regression for single-index models}
\runtitle{Conditional regression for single-index models}

% indicate corresponding author with \corref{}
% \author{\fnms{John} \snm{Smith}\corref{}\ead[label=e1]{smith@foo.com}\thanksref{t1}}
% \thankstext{t1}{Thanks to somebody} 
% \address{line 1\\ line 2\\ printead{e1}}
% \affiliation{Some University}

\begin{aug}
\author{\fnms{Alessandro} \snm{Lanteri}\thanksref{a} \ead[label=e1]{alessandro.lanteri@unimi.it}}, 
\author{\fnms{Mauro} \snm{Maggioni}\thanksref{b}\ead[label=e2]{mauromaggionijhu@icloud.com}}
\and
\author{\fnms{Stefano} \snm{Vigogna}\thanksref{c}*%
\ead[label=e3]{vigogna@dibris.unige.it}%
}

\address[a]{DEMM, Universit\`a degli Studi di Milano, Milano, Italy.
\printead{e1}
}

\address[b]{Department of Mathematics and Department of Applied Mathematics \& Statistics, Johns Hopkins University, Baltimore, USA.
\printead{e2}
}

\address[c]{MaLGa, DIBRIS, Universit\`a degli Studi di Genova, Genova, Italy.
\printead{e3}
}

\runauthor{A. Lanteri, M. Maggioni and S. Vigogna}

%\affiliation{University of Torino and Collegio Carlo Alberto, Johns Hopkins University, University of Genova}
\end{aug}

\begin{abstract}
The single-index model is a statistical model for intrinsic regression
where responses are assumed to depend on a single yet unknown linear combination of the predictors,
allowing to express the regression function as $ \E [ Y | X ] = f ( \langle v , X \rangle ) $
for some unknown \emph{index} vector $v$ and \emph{link} function $f$.
Conditional methods provide a simple and effective approach to estimate $v$
by averaging moments of $X$ conditioned on $Y$, but
depend on parameters whose optimal choice is unknown and
do not provide generalization bounds on $f$.
In this paper we propose a new conditional method
converging at $\sqrt{n}$ rate under an explicit parameter characterization.
Moreover, we prove that polynomial partitioning estimates achieve the $1$-dimensional min-max rate for regression of H\"older functions
when combined to any $\sqrt{n}$-convergent index estimator.
Overall this yields an estimator for dimension reduction and regression of single-index models that attains statistical optimality in quasilinear time.
\end{abstract}

\begin{keyword}[class=MSC]
\kwd[primary ]{62G05}
\kwd[; secondary ]{62G08}
\kwd{62H99}
\end{keyword}

\begin{keyword}
\kwd{Single-index model}
\kwd{dimension reduction}
\kwd{nonparametric regression}
\kwd{finite-sample bounds}
\end{keyword}

\end{frontmatter}

\section{Introduction}

Consider the standard regression problem
of estimating a function $ F : \R^d \to \R $ from $n$ samples $ \{(X_i,Y_i)\}_{i=1}^n $,
where the $X_i$'s are independent realizations of a predictor variable $X\in\mathbb{R}^d$,
\begin{equation}
Y_i=F(X_i)+\zeta_i\,, \qquad  i = 1,\dots,n \,,
\label{e:regressionDef}
\end{equation}
and the $\zeta_i$'s are realizations, independent among themselves and of the $X_i$'s, of a random variable $\zeta$ modeling noise.
Under rather general assumptions on $\zeta$ and the distribution $\rho$ of $X$, if we only know that $F$ is $s$-H\"older regular (and, say, compactly supported), it is well-known that the min-max nonparametric rate for estimating $F$ in $L^2(\rho)$ is $n^{-s/(2s+d)}$ \cite{Gyorfi}.
This is an instance of the \emph{curse of dimensionality}: the rate slows down dramatically as the dimension $d$ increases.
Many regression models have been introduced throughout the decades to circumvent this phenomenon; see, for example, the classical reference \cite{Stone82}.
When the covariates are intrinsically low-dimensional, concentrating on an unknown low-dimensional set,
several estimators have been proved to converge at rates that are optimal with respect to the intrinsic dimension \cite{bickel2007,NIPS2011_4455,NIPS2013_5103,liao2016learning,GMRARegression}.
In other models, the domain may be high-dimensional, but the function itself is assumed to depend only on a small number of features.
A classical case is the so-called {\em{single-index model}}, where $F$ has the structure
\begin{equation}
F(x)=f(\langle v,x\rangle)
\label{e:SIMdef}
\end{equation}
for some {\em{index vector}} $ v \in \R^d $ (that we may assume unitary without loss of generality) and {\em{link function}} $f:\mathbb{R}\rightarrow\mathbb{R}$. In this context one may consider different estimation problems, depending on whether $f$ is known (e.g. in logistic regression) or both $f$ and $v$ are unknown. We are interested in the latter case.
Clearly, if $v$ was known we could learn $f$ by solving a $1$-dimensional regression problem, which may be done efficiently for large classes of functions $f$.
So the question is: what is the price to pay for not knowing $v$?

It was conjectured in \cite{Stone82} that the min-max rate for regression of single-index models is $ n^{-s/(2s+1)} $,
that is, the min-max rate for univariate functions:
no statistical cost would have to be paid.
This rate was achieved for pointwise convergence with kernel estimators in \cite[Theorem 3.3]{ADE89} and \cite[Section 2.5]{horowitz1998semiparametric},
observing that the index can be learned at the parametric rate $ n^{-1/2} $.
Based on these results or on similar heuristics, a wide part of literature focused on index estimation, setting aside the regression problem.
From this perspective, the main point is that
the estimation of the index $v$ can be carried out at parametric rate
in spite of the unknown nonparametric nonlinearity $f$.
\new{
The existence of an estimator converging in $L^2(\rho)$ at rate $ n^{-s/(2s+1)} $
was established in \cite[Corollary 22.1]{Gyorfi}.
A proof that $ n^{-s/(2s+1)} $ is also a lower min-max rate of convergence (in $L^2(\rho)$),
thus completing the proof of Stone's conjecture,
can be found in \cite[Theorem 2]{Aggr}.
}

Granted that the estimation of the index does not entail additional \emph{statistical} costs (in terms of regression rates),
a different but no less important problem is
determining the \emph{computational} cost (\new{expressed as the number of required elementary operations)} to implement a statistically optimal estimator for the single-index model.
The rate in \cite[Corollary 22.1]{Gyorfi} is obtained by a least squares joint minimization over $v$ and $f$,
but no executable algorithm is provided.
In \cite{Aggr} it was proposed to aggregate local polynomial estimators on a lattice of the unit sphere,
yielding an adaptive, universal min-max estimator, although at the expense of a possibly exponential number of operations $\Omega(n^{(d-1)/2})$. While a heuristic faster algorithm is therein also proposed, its statistical effectiveness was not proved.

Several other methods for the estimation of $v$ or $f$ were developed over the years.
A first category includes semiparametric methods based on maximum likelihood estimation \cite{ICHIMURA199371,hardle1993,Delecroix1997Efficient,delecroix1999,DELECROIX2006730,Carroll97,Carroll98}.
M-estimators produce $\sqrt{n}$-consistent index estimates under general assumptions, but their implementation is cumbersome and computationally demanding,
in that depends on sensitive bandwidth selections for kernel smoothing
and relies on high-dimensional joint optimization.
An attempt at avoiding the data sparsity problem was made by \cite{cui2011},
which proposed a fixed-point iterative scheme only involving $1$-dimensional nonparametric smoothers.
Direct methods such as the average derivative estimation (ADE \cite{Stoker86consistentestimation,ADE89})
estimate the index vector exploiting its proportionality with the derivative of the regression function.
Early implementations of this idea suffer from the curse of dimensionality
due to kernel estimation of the gradient,
while later iterative modifications \cite{hristache2001} provide $\sqrt{n}$-consistency under mild assumptions,
yet not eliminating the computational overhead.
More recently, Isotron \cite{kalai2009the,Isotron} and SILO \cite{SILO}
achieved linear complexity,
but the proven regression rate, even if independent of $d$, is not min-max
(albeit SILO focuses on the $ n \ll d $ regime,
rather than the limit $n\rightarrow\infty$ as here and most past work).
In a different yet related direction,
the even more recent \cite{BachConvexNeural} showed that convex neural networks
can adapt to a large variety of statistical models, including single-index;
however, they do not match the optimal learning rate (even for the single-index case),
and at the same time do not have associated fast algorithms.

Meanwhile, a line of research addressed \emph{sufficient dimension reduction} \cite{li2018sufficient} in the more general \emph{multi-index model} (or a slight extension thereof),
where $F$ depends on multiple $ k < d $ index directions spanning an unknown index subspace.
Along this thread we can find
nonparametric methods extending ADE to multiple indices,
such as structural adaptation (\cite{SA,dalalyan2008}),
the outer product of gradients (OPG \cite{MAVE})
and the minimum average variance estimation (MAVE \cite{MAVE,Xia2006}).
Alternatively, \textit{conditional methods}
derive their estimates from statistics of the conditional distribution of the explanatory variable $X$ given the response variable $Y$.
Prominent examples are sliced inverse regression (SIR \cite{DuanLi1,SIR}),
sliced average variance estimation (SAVE \cite{SAVE}),
simple contour regression (SCR \cite{CR}) and its generalizations (e.g. GCR \cite{CR}, DR \cite{DR}).
Conditional methods are appealing for several reasons.
Compared to nonparametric methods, their implementation is straightforward,
consisting of noniterative computation of ``sliced'' empirical moments
and having only one ``slicing'' parameter to tune.
Moreover, they are computationally efficient and simple to analyze,
enjoying $\sqrt{n}$-consistency and, in most cases,
complexity linear in the sample size and quadratic in the ambient dimension.
On the downside, this comes in general at the cost of stronger distributional assumptions,
and no theoretically optimal choice of the slicing parameter is known \cite[p. 75]{MR3211755}.
Moreover, while conditional methods offer a provable, efficient solution for sufficient dimension reduction,
they do not address the problem of estimating the link function on the estimated index space.
We summarize the key properties for these and other aforementioned techniques in Table \ref{tab:market} below.

\begin{center}
\begin{table}[h] \label{tab:market}
\caption{
Proven rate \new{in $L^2(\rho)$} (up to log factors) and computational cost of several methods for index estimation and/or regression in single-index models, together with salient assumptions on the model.
}

\label{tab:market}
\renewcommand{\arraystretch}{1.2}
\resizebox{\columnwidth}{!}{%
\begin{tabular}{c  c  c  c  c  c  c  c }
\cline{2-8}
& \multicolumn{4}{c }{\textbf{Performance}} & \multicolumn{3}{c}{\textbf{Assumptions}}
\\
\cline{2-8}
& \multicolumn{2}{c }{\textbf{Proven rate}} & \multicolumn{2}{c }{\textbf{Computational cost}} & \multirow{2}{*}{$\pmb{X}$} & \multirow{2}{*}{$\pmb{f}$} & \multirow{2}{*}{$\pmb{\zeta}$}
\\
& $\pmb{\hv}$ & $\pmb{\hf}$  & $\pmb{\hv}$ & $\pmb{\hf}$ & & &
\\
\hline
\multicolumn{1}{ l }{\textbf{SIR}} \cite{SIR} & $n^{-1/2}$ & $-$ & $d^2n\log n$ & $-$ & linear $ \E [ X | v^TX ] $ & N/A & N/A
\\
\hline
\multicolumn{1}{ l }{\textbf{SAVE}} \cite{SAVE} & $n^{-1/2}$ & $-$ & $d^2n\log n$ & $-$ & \makecell{ linear $ \E [ X | v^TX ] $, \\ const $ \Cov [ X | v^TX ] $ } & N/A & N/A
\\
\hline
\multicolumn{1}{ l }{\textbf{SCR}} \cite{CR} & $n^{-1/2}$ & $-$ & $d^2n^2\log n$ & $-$ & \makecell{ linear $ \E [ X | v^TX ] $, \\ const $ \Cov [ X | v^TX ] $ } & \makecell{stochastically\\monotone} & \makecell{decreasing\\density of $\zeta - \widetilde{\zeta}$}
\\
\hline
\multicolumn{1}{ l }{\textbf{DR}} \cite{DR} & $n^{-1/2}$ & $-$ & $d^2n\log n$ & $-$ & \makecell{ linear $ \E [ X | v^TX ] $, \\ const $ \Cov [ X | v^TX ] $ } & N/A & N/A
\\
\hline
\multicolumn{1}{ l }{\textbf{ADE}} \cite{hristache2001} & $n^{-1/2}$ & $-$ & $ d^2n^2\log n $ & $-$ & $\cc^0$ positive density & $ \cc^2 $ & Gaussian
\\
\hline
\multicolumn{1}{ l }{\textbf{rMAVE}} \cite{Xia2006} & $n^{-1/2}$ & N/A & \multicolumn{2}{c }{$d^2n^2$ per iteration} & \multicolumn{1}{c }{\makecell{ $v^TX$ has $\cc^3$ density, \\ $ \E |X|^6 < \infty $ } } & \multicolumn{1}{c }{ $\cc^3$ } & $ \E|Y|^3 < \infty $
\\
\hline
\multicolumn{1}{ l }{\textbf{Aggregation}} \cite{Aggr} & $-$ & $n^{-\tfrac{s}{2s+1}}$ & \multicolumn{2}{c }{$(n\log n)^d$} & \makecell{compact supported\\lower bounded density} & $\cc^s$ & $\sigma(X)\mathcal{N}(0,1)$
\\
\hline
\multicolumn{1}{ l }{\textbf{SlIsotron}} \cite{Isotron} & N/A & $n^{-1/6}$ & \multicolumn{2}{c }{$(\tfrac{n}{d\log n})^{1/3}dn\log n$} & bounded & \makecell{monotone,\\Lipschitz} & bounded
\\
\hline
\multicolumn{1}{ l }{\textbf{SILO}} \cite{SILO} & $n^{-1/4}$ & $n^{-1/8}$ & $dn$ & $n\log n$ & Gaussian & \makecell{monotone,\\Lipschitz} & bounded
\\
\hline
\hline
\multicolumn{1}{ l }{\textbf{SVR}} & $n^{-1/2}$ & $n^{-\tfrac{s}{2s+1}}$ & $d^2n\log n$ & $n\log n$ & \makecell{ linear $ \E [ X | v^TX ] $, \\ $ \Var [ w^TX | v^TX ] \gtrsim 1 $ } & \makecell{coarsely\vspace{-2pt}\\monotone,\\ $\cc^s$} & sub-Gaussian
\\
\hline
\end{tabular}%
}
\end{table}
\end{center}

In this work we introduce a new estimator and a corresponding algorithm, called Smallest Vector Regression (SVR), that are statistically optimal and computationally efficient.
Our dimension reduction technique falls in the category of conditional methods.
Unlike existing studies for similar approaches,
we are able to provide a characterization for the parameter selection,
and bound both the index estimation and the regression errors.
Since regression is performed using standard piecewise polynomial estimates on the projected samples after and independently of the index estimation step,
our regression bounds hold conditioned to any index estimation method of sufficient accuracy.
Our analysis yields convergence by proving finite-sample bounds in high probability.
The resulting statements are stronger compared to the ones in the available literature on conditional methods,
where typically only asymptotic convergence, at most, is established.
As a side note, SVR has been empirically tested with success also in the multi-index model,
but our analysis, and therefore our exposition, will be restricted to the single-index case
(for a related analysis of the multi-index model, see \cite{10.1214/20-EJS1785}).
In summary, the contributions of this work are:
\begin{enumerate}
\item We introduce a new conditional regression method that combines accuracy, robustness and low computational cost. This method is multiscale and sheds light on parameter choices that are important in theory and practice, and are mostly left unaddressed in other techniques.
\item We prove strong, finite-sample convergence bounds, both in probability and in expectation, for the index estimate of our conditional method.
\item We prove that \new{the population statistics used to construct} conditional polynomial partitioning estimation \new{(up to degree $1$) are} H\"older continuous with respect to the index estimation. This allows to bridge the gap between a good estimator of the index subspace and the performance of regression on the estimated subspace.
\item We prove that all $\sqrt{n}$-convergent index estimation methods
lead to the min-max $1$-dimensional rate of convergence
when combined with polynomial partitioning estimates \new{(up to degree $1$)}.
\item Using the above,
we provide an algorithm for the estimation of the single-index model
with theoretical guarantees of optimal convergence in quasilinear time.
\end{enumerate}

The paper is organized as follows. In Section \ref{s:ConditionRegressionMethods} we review several conditional regression methods, and introduce our new estimator; in Section \ref{s:AnalysisConvergence} we analyze the converge of our method and establish min-max rates for regression conditioned on any sufficiently accurate index estimate; in Section \ref{s:NumericalExperiments} we conduct several numerical experiments, both validating the theory and exploring numerically aspects of various techniques that are not covered by theoretical results;
in the Appendix we collect additional proofs and technical results.

\begin{table}[H]
\captionsetup{labelformat=empty}
\caption{\normalfont \textbf{Notation}}
\label{tab:notation}
\centering
%\footnotesize{
\ra{1.2}
\resizebox{\columnwidth}{!}{
\begin{tabular}{l l l l}

\textbf{symbol} & \textbf{definition} & \textbf{symbol} & \textbf{definition} \\

\hline

$C,c$ & positive absolute constants & $ \|A\| $ & spectral norm of a matrix $A$ \\

$a \lesssim b$ & $ a \le C b $ for some \new{positive absolute constant} $C$ & $ \la_i(A) $ & $i$-th largest eigenvalue of a matrix $A$ \\

$ a \asymp b $ & $a \lesssim b$ and $b \lesssim a$ & $ |I| $ & Lebesgue measure of an interval $I$ \\

$ \|u\| $ & Euclidean norm of a vector $u$ & $ \new{\#\mathcal{S}} $ & \new{cardinality of a set $\mathcal{S}$} \\

$ B(x,r) $ & Euclidean ball of center $x$ and radius $r$ & $ \II (E) $ & indicator function of an event $E$ \\

\new{$ \spn\{\mathcal{S}\} $ } & linear span of a set $\mathcal{S}$ & $\P(E)$  & probability of an event $E$ \\

\new{$ \{\mathcal{S}\}^\perp$} & orthogonal complement of a set $\mathcal{S}$ & $\E[X]$ & expectation of a random variable $X$ \\

\new{$ P_u $ } & orthogonal projection onto $ \spn\{u\} $ & $ \var[X] $ & variance of a r.v. $X$ \\

\new{$\angle(u,v)$} & angle between $\spn{u}$ and $\spn{v}$ & $ \cov[X] $ & covariance matrix of a r.v. $X$ \\

$ \langle u , v \rangle $ & inner product of vectors $u$ and  $v$ & $ X \mid Y $ & r.v. $X$ conditioned on r.v. $Y$ \\

\hline

\end{tabular}
}
\end{table}

\setcounter{table}{1}

\section{Conditional regression methods}
\label{s:ConditionRegressionMethods}

We consider the regression problem as in \eqref{e:regressionDef}, within the single-index model, with the definition and notation as in \eqref{e:SIMdef}.
When $f$ is at least Lipschitz, \eqref{e:SIMdef} implies $ \nabla F(x) \in \spn\{v\} $ for a.e. $x$;
this is the reason why we may refer to $v$ as the gradient direction.
Given $n$ independent copies $ (X_i,Y_i) $, $ i = 1,\dots,n $, of the random pair $ (X,Y) $,
we will construct estimators $\hv$ and $\hf$,
and derive separate and compound non-asymptotic error bounds in probability and expectation.
Our method is conditional in two ways:
\begin{enumerate*}[label={\arabic*)}]
\item \label{it:hv} the estimator $\hv$ is a statistic of the conditional distributions of the $X_i$'s given the $Y_i$'s (restricted in suitable intervals);
\item \label{it:hf} the estimator $\hf$ is conditioned on the estimate $\hv$.
\end{enumerate*}
Several conditional methods for step \ref{it:hv} have been previously introduced, see e.g. \cite{SIR,SAVE,CR,DR}.
Our error bounds for step \ref{it:hf} are independent of the particular method used in \ref{it:hv}, only requiring a minimal non-asymptotic convergence rate.
For these reasons, one may as well consider other existing or new methods for \ref{it:hv},
even non conditional,
and check for each one the convergence rate needed to pair it with \ref{it:hf}.

The common idea of all conditional methods is to compute statistics of the predictor $X$ conditioned on the response $Y$.
Conditioning on $Y$, one introduces anisotropy in the distribution of $X$,
forcing it to reveal the index structure through its moments,
be they means (SIR) or variances (SAVE, SCR, DR).

Before introducing SVR,
we will review the two methods that have strongest connections with ours, namely SIR and SAVE.
For consistency with SVR, we will present SIR and SAVE through a particular multiscale implementation.
This will allow to progressively define the objects SVR is built upon,
facilitating the comparison.

%\noindent {\bf{Note}}. All the algorithms we consider include a preprocessing step where data are standardized to have $0$ mean and isotropic covariance.
%Thus, when illustrating each method, we will assume such standardization.

\subsection{Sliced Inverse Regression}
\label{s:SIR}

Sliced Inverse Regression \cite{SIR} (SIR) estimates the index vector
by a principal component analysis of a sliced empirical approximation of the inverse regression curve $ \E[X | Y] $.
Samples on this curve are obtained by slicing the range of the function
and computing sample means of the corresponding approximate level sets.
In our version of SIR, we take dyadic partitions $ \{ {\Cijk\el\h} \}_{\h=1}^{2^\el} $, $ \el \in \Z $, of a subinterval of the range of $Y$, where each $ {\Cijk\el\h} $ is an interval of length $ \asymp 2^{-\el} $.
\new{
Let
$
 n_{\el,\h} = \# \{Y_i \in \Cijk\el\h\} .
$
}
After calculating the sample mean for each slice,
\begin{equation*}
\hmujk\el\h = \frac{1}{\new{n_{\el,\h}}} \sum_i X_i \II\{Y_i \in {\Cijk\el\h} \} \,, \qquad \h = 1,\dots,2^{\el}\,,
%\label{e:SIRmu}
\end{equation*}
SIR outputs $ \hvc\el $ as the eigenvector of largest eigenvalue of the weighted covariance matrix
$$
 \hMc\el = \sum_{\h} \hmujk\el\h \hmujk\el\h^T \frac{\new{n_{\el,\h}}}n \, .
$$
Note that the population limits of $\hmujk\el\h$ and $\hMc\el$ are, respectively,
$$
 \mujk\el\h = \E [ X \mid Y \in {\Cijk\el\h} ] \,, \qquad \Mc\el = \sum_\h \mujk\el\h \mujk\el\h^T \P\{ Y \in {\Cijk\el\h} \} \,.
$$

\subsection{Sliced Average Variance Estimation} \label{s:SAVE}
Sliced Average Variance Estimation \cite{SAVE} (SAVE)
generalizes SIR to second order moments.
After slicing the range of $Y$ and computing the centers $ \hmujk\el\h $'s,
it goes further and construct the sample covariance on each slice:
\begin{equation*}
   \hSijk\el\h = \frac{1}{\new{n_{\el,\h}}} \sum_i (X_i - \hmujk\el\h) (X_i - \hmujk\el\h)^T \II\{Y_i \in {\Cijk\el\h}\}\,.
%\label{e:SigmaHatSVR}
\end{equation*}
Then, it averages the $ \hSijk\el\h $'s and defines $ \hv_\el $ as the eigenvector of largest eigenvalue of
\begin{equation*}
 \hSi_\el = \sum_h (I - \hSijk\el\h)^2 \frac{\new{n_{\el,\h}}}{n} \,.
\end{equation*}
The matrices $ \hSijk\el\h $ and $ \hSi_\el $ are empirical estimates of
\begin{equation*}
\Sijk\el\h = \cov [ X \mid Y \in {\Cijk\el\h} ] \,, \qquad \Si_\el = \sum_h (I - \Sijk\el\h)^2 \P \{ Y \in \Cijk\el\h \} \,.
\end{equation*}

\subsection{Smallest Vector Regression (SVR)}

This is the new method we propose here. We perform a local principal component analysis on each approximate level set obtained by multiscale slices of $Y$.
Because of the special structure \eqref{e:SIMdef},
we expect each (approximate) level set to be narrow along $v$ and spread out along the orthogonal directions.
Thus, the smallest principal component should approximate $v$.
Once we have an estimate for $v$, we can project down the $d$-dimensional samples and perform nonparametric regression of the $1$-dimensional function $f$.
The method consists of the following steps:
\begin{enumerate}[label=\textnormal{1.\alph*)}]
 \item \label{it:Clh} Construct a multiscale family of dyadic partitions of a subinterval $\Ci$ of the range of $Y$:
 $$
  \{ {\Cijk\el\h} \}_{\h=1}^{2^\el} \,, \qquad \el\in\Z \,.
 $$
 For each $l$, $ \{ \Cijk\el\h \}_h $ is a partition of $\Ci$ with $ | {\Cijk\el\h} | = |\Ci| 2^{-\el} $.
 \item \label{it:vlh} Let $\hh_\el$ be the set of $h$'s such that $\new{n_{\el,\h}} \ge 2^{-\el} n $. For $h\in \hh_\el$, let $\hvjk\el\h$ be the eigenvector corresponding to the \emph{smallest} eigenvalue of $\hSijk\el\h$.
 \item \label{it:vl} Compute the eigenvector $\hvc\el$ corresponding to the \emph{largest} eigenvalue of
 $$
  \hVc\el = \frac{1}{\sum_{\h\in \hh_\el} \new{n_{\el,\h}}} \sum_{\h\in \hh_\el} \hvjk\el\h \hvjk\el\h^T  \new{n_{\el,\h}}\, .
 $$
  \end{enumerate}
  \begin{enumerate}[label=2)]
 \item Regress $f$ using a dyadic polynomial estimator $ \hf_{j|\hvc\el} $ at scale $ j \ge 0 $ on the samples $ (\langle \hvc\el, X_i\rangle , Y_i ) $, $ i = 1,\dots,n $ (more details in Section \ref{sec:hf}). Return $ \hFjhvl(x) = \hf_{j|\hvc\el} (\langle \hvc\el,x\rangle) $.
 \end{enumerate}
 
While SVR shares step \ref{it:Clh} with SIR, it differs from SIR in step \ref{it:vlh},
where it takes conditional (co)variance statistics in place of conditional means,
and in step \ref{it:vl}, where it averages smallest-variance directions rather than means.
We may regard SAVE and SVR as two different modifications of SIR to higher order statistics,
which allows in general for better and more robust estimates (see Section \ref{s:numestv}).
The fundamental difference between SVR and SAVE is that
SVR computes local estimates of the index vector which then aggregates in a global estimate,
while SAVE first aggregates local information and then computes a single global estimate.

\new{
In our theoretical analysis we will assume $Y$ sub-Gaussian,
and choose $S$ deterministically with radius multiple of a standard deviation proxy of $|Y|$,
in order to ensure that $S$ contains most (but possibly not all) samples (see Section \ref{sec:index-bounds}).
However, a practically good choice for $S$ is the smallest closed interval containing all the observed data, namely $ [ \min_i Y_i , \max_i Y_i ] $,
in which case $S$ would be a random interval.
If $Y$ is bounded in a known interval, one may also take $S$ as that interval,
so that $S$ would be again deterministic.
}

\subsection{Conditional partitioning estimators} \label{sec:hf}
In step \ref{it:hf} we use piecewise polynomial estimators in the spirit of \cite{BCDDT1,BCDD2}: these techniques are based on partitioning the domain (here, in a multiscale fashion), and constructing a local polynomial on each element of the partition by solving a least squares fitting problem.
A global estimator is then obtained by summing the local polynomials over a certain partition (possibly using a partition of unity to obtain smoothness across the boundaries of the partition elements).
The degree of the local polynomials needed to obtain optimal rates depends on the regularity of the function, and may be chosen adaptively if such regularity is unknown.
A proper partition (or scale) is then chosen to minimize the expected mean squared error (MSE), by classical bias-variance trade-off.

In detail, given an estimated direction $\widehat v$, our step \ref{it:hf} consists of:
\begin{enumerate}[label=\textnormal{2.\alph*)}]
\item \label{it:Ijk} Construct a multiscale family of dyadic partitions of an interval $I$:
$$
  \{ I_{j,k} \}_{\k\in\K\j} \,, \qquad \j\in\Z \,.
 $$
 For each $j$, $\{ I_{j,k} \}_{\k\in\K\j}$ is a partition of $I$ with $ | I_{j,k} | = |I| 2^{-\j} $.
 \item \label{it:fjk} For each $ I_{j,k} $, compute the best fitting $m$-order polynomial
 $$
 \hfjkhv = \argmin_{\deg(p) \le m} \sum_{i} | Y_i - p(\langle \hv,X_i\rangle) |^2 \II\{\langle \hv,X_i\rangle \in I_{j,k} \} \,.
 $$
 \item \label{it:fj} Sum all \new{(truncated)} local estimates $ \hfjkhv $ over the partition $ \{I_{j,k}\}_{\k\in\K\j} $:
 $$
 \hfjhv (t) = \sum_{\k\in\K\j} \new{T_M[}\hfjkhv (t) \new{]} \II\{t \in I_{j,k}\} \,,
 $$
\new{ where $ T_M [t] = \operatorname{sign}(t) \min\{ |t| , M \} $ for some $ M \in (0,+\infty] $.}
 \end{enumerate}
 The final estimator of $F$ at scale $j$ and conditioned on $\hv$ is given by
\begin{equation*}
  \hFjhv(x) = \hfjhv (\langle \hv,x\rangle) \,.
%\label{e:SVRestimatorform}
\end{equation*}

In SVR, step \ref{it:hf} is carried out on $ \hv = \hvl $, yielding for each $\el$ a multiscale family of local polynomials $ \hfjkl $ and global estimators $ \hfjl , \hFjl $.
However, we will prove results on the performance of \ref{it:hf} also when $\hv$ is the output of any estimator with $n^{-1/2}$ probabilistic convergence rate (Theorem \ref{thm:main}).

Note that for SVR, but also for SIR and SAVE, the final estimator $ \hFjl $ depends on two scale parameters, $l$ and $j$, which may be chosen independently.
Our analysis yields optimal choices for these two scale parameters;
the scale  $2^{-\el}$ at which the direction $v$ is estimated will not be finer than the noise level,
while a possibly finer partition with $\j > \el$ may be selected to improve the polynomial fit,
allowing the estimator $ \hFjl $ to de-noise its predictions, provided that enough training samples are available (see Figure \ref{fig:reg_example}).

\begin{figure}[h]

\centering

\includegraphics[width = 0.3\linewidth]{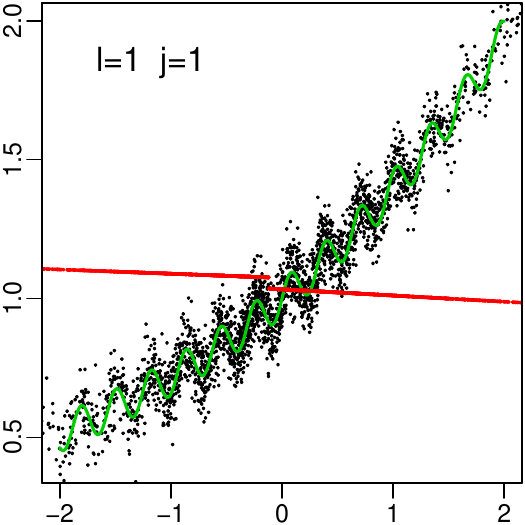}
\includegraphics[width = 0.3\linewidth]{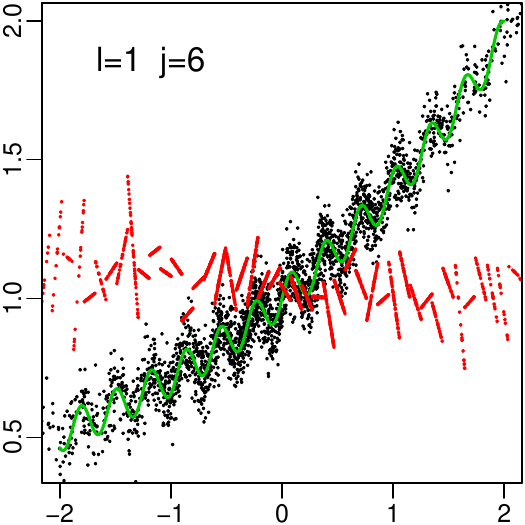}
\includegraphics[width = 0.3\linewidth]{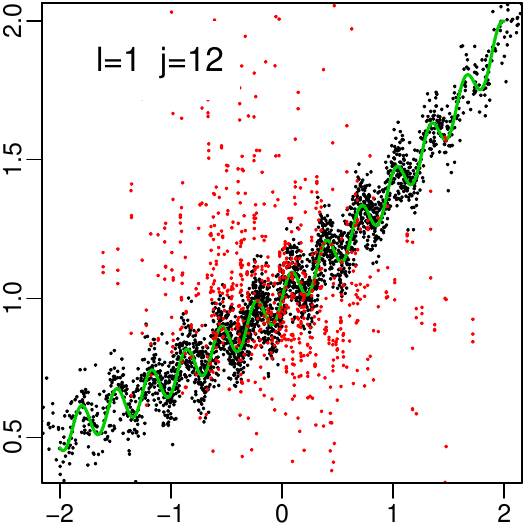} \\
\includegraphics[width = 0.3\linewidth]{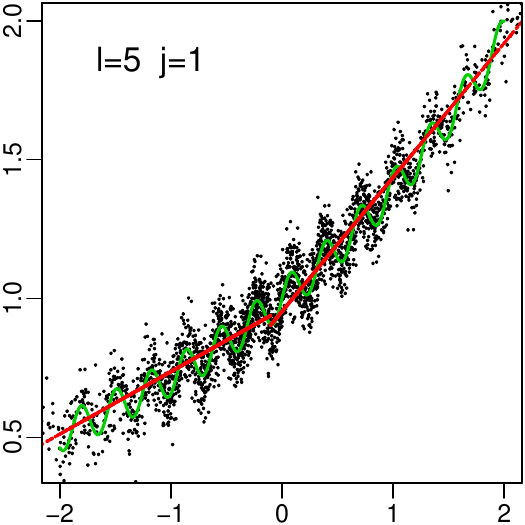}
\includegraphics[width = 0.3\linewidth]{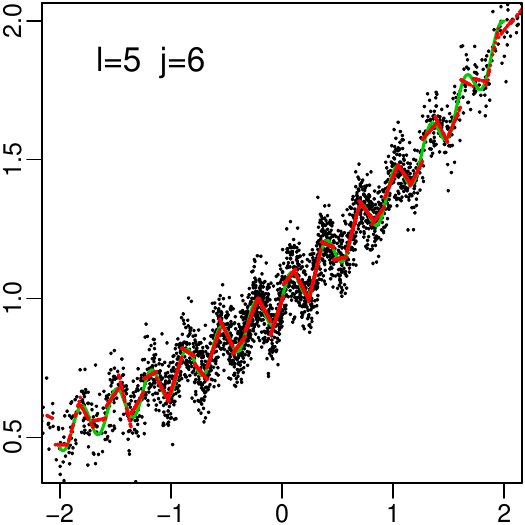}
\includegraphics[width = 0.3\linewidth]{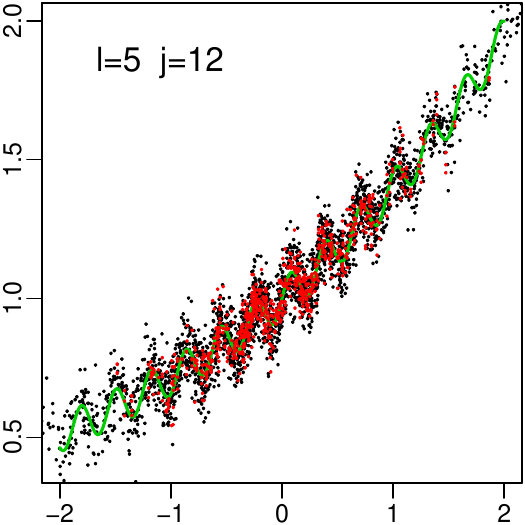}

\caption{ Local linear estimator (red) at different scales $l$, to regress the function $f$ (green) from noisy samples (black). The horizontal axis is $\langle v,x\rangle$, while of course the estimator $\hat f_{j|l}$ is a function of $\langle\hv,x\rangle$ and may appear multi-valued in $\langle v,x\rangle$.
For small $l$ (top row) the error in the estimation of the index $v$ is large, leading to poor regression estimates regardless of the regression scale $j$. For larger $l$ (bottom row) a good accuracy for the index vector $v$ is achieved, and the estimator is able to approximate the function even below the noise level and the non-monotonicity scale (e.g. for $j=6$); overfitting occurs for $j$ too large (e.g. $j=12$ in this case).\label{fig:reg_example}}

\end{figure}

\new{
For theoretical purposes, we will assume $X$ sub-Gaussian,
and take $I$ as a deterministic interval with radius
proportional to a standard deviation proxy of $\|X\|$ (see Section \ref{sec:regression}).
Again, the practitioner may choose $I$ as the random interval $ [ \min_i \langle \hv , X_i \rangle , \max_i \langle \hv , X_i \rangle ] $,
or as the projection of the support of $X$, if this is known.
}
\new{The truncation level $M$ is ideally $ \| F \|_\infty ( = \| f \|_\infty ) $, or a proxy thereof.}

We report below the complete sequence of steps run by SVR.
The time complexity of the algorithm is shown in Table \ref{tab:cost}.
Note that \ref{it:fj} has only an evaluation cost, i.e. $\hfjhv$ does not need to be constructed, but only evaluated.
%The intervals $\Ci$ and $I$ can be chosen such that they contain all or most of the observed points. We will discuss in Section \ref{s:AnalysisConvergence} possible choices for $\Ci$ and $I$.

\begin{algorithm}[h]
\caption{SVR} \label{alg:SVR}

 \SetKwInOut{Input}{Input}
 \SetKwInOut{Output}{Output}
 
 \Input{
  samples $ \{ (X_i,Y_i) \}_{i=1}^n \subset \R^d \times \R $, intervals $S,I$, polynomial degree $ m \in \N $\new{, truncation level $ M \in (0,\infty] $.}
  %\\ \hspace{0.4pt} polynomial degree $ m \in \N $.
 }
 
 \Output{
 $\hvl$ estimate of $v$, $\hfjl$ estimate of $f$.
 }
 
 \hrulefill
 
 \begin{enumerate}[label=\footnotesize\textbf{1.\alph*)}]%,wide, labelwidth=!, labelindent=0pt]
 
% \item[\phantom{2.a)\hspace{2.5pt}}] standardize data to $0$ mean and $I_d$ covariance;
 \item construct $ \{{\Cijk\el\h}\}_{\el,\h} $, dyadic decomposition of $\Ci$;
 \item 
 for all $ h \in \hh_\el = \{ h : n_{\el,\h} \ge 2^{-\el} n \} $,
 where  $ n_{\el,\h} = \#\{ Y_i \in \Cijk \el\h \} $, compute \\
 $ \hmujk\el\h = \frac{1}{n_{\el,\h}} \sum_i X_i \II\{Y_i \in {\Cijk\el\h} \}$, \\
 $ \hSijk\el\h = \frac{1}{n_{\el,\h}} \sum_i (X_i - \hmujk\el\h) (X_i - \hmujk\el\h)^T \II\{Y_i \in {\Cijk\el\h} \}$, \\
 $\hvjk\el\h$, the eigenvector of $\hSijk\el\h$
 corresponding to the smallest eigenvalue;
 \item compute $\hvl$, the eigenvector of $ \hVc\el = \frac{1}{\sum_{\h\in\hh_\el}n_{\el,\h}} \sum_{\h\in\hh_\el} \hvjk\el\h \hvjk\el\h^T n_{\el,\h}$ corresponding to the largest eigenvalue;
 \end{enumerate}
%\hrulefill
 
 \begin{enumerate}[label=\footnotesize\textbf{2.\alph*)}]%,wide, labelwidth=!, labelindent=0pt]
 \item construct $ \{I_{j,k} \}_{\j,\k} $, dyadic decomposition of $I$;  
 \item compute $ \hfjkl = \argmin_{\deg(p) \le m} \sum_{i} | Y_i - p(\langle \hvl, X_i\rangle) |^2 \II\{\langle \hvl, X_i\rangle \in I_{j,k}\} $;
 \item define $ \hfjl (t) = \sum_{\k} \new{T_M [}\hfjkl (t)\new{]} \II\{t \in I_{j,k}\} $. 
 \end{enumerate}

\end{algorithm}

\begin{table}[h]
\caption{Computational cost breakdown for SVR.}
\label{tab:cost}
\ra{1.5}
\centering
\begin{tabular}{l l l }
 \cline{2-3}
 & \textbf{task} & \textbf{computational cost} \\
 \cline{2-3}
%  & standardization & $ O(d^2n) $ \\

 \ref{it:Clh} & dyadic decomposition of the range & $ O(n\log n) $ \\

  \ref{it:vlh} & PCA on level sets & $ O(d^2 n\log n) $ \\

 \ref{it:vl} & PCA of local directions & $ O(d^2 n \log n) $ \\
 \cline{2-3}
 \ref{it:Ijk} & dyadic decomposition of the domain & $ O(n \log n) $ \\
 \ref{it:fjk}%\hyperref[it:fj]{c)}
 & $m$-order polynomial regression & $ O(m^2 n \log n) $ \\
 \cline{2-3}
 & total & $ O((d^{2} + m^2) n \log n) $ \\
 \cline{2-3}
\end{tabular}
\end{table}

\section{Analysis of convergence}
\label{s:AnalysisConvergence}

\new{
We next state and discuss several assumptions that are going to be useful for our theoretical analysis.
We point out that, when stating an assumption, such an assumption it is not intended to be implicitly assumed once for all throughout the paper.
Rather, in each result we will explicitly write which assumptions are being used,
referring to the corresponding labels.
}
We start by collecting a few basic requirements on the distributions of $X$, $Y$ and $\zeta$:
\begin{enumerate}[label = \textnormal{(X)}]
\item \label{X} $X$ has sub-Gaussian distribution with variance proxy $R^2$.
\end{enumerate}
\begin{enumerate}[label = \textnormal{(Y)}]
\item \label{Y} $Y$ has sub-Gaussian distribution with variance proxy $R^2$.
\end{enumerate}
\begin{enumerate}[label = \textnormal{(Z)}]
\item \label{Z} $ \zeta $ is sub-Gaussian with variance proxy $\sigma^2$.
\end{enumerate}
\ref{X}, \ref{Y} and \ref{Z} are standard assumptions in regression analysis {\it tout court}.
%Note that we start from standardized data,
%that is, we will not be tracking the (negligible) error resulting from standardization based on data samples.
The following is instead typical of single-index models:
\begin{enumerate}[label = \textnormal{(LCM)}]
\item \label{LCM} \new{$ \E [ X \mid \langle v , X \rangle ] $ is linear in $ \langle v , X \rangle $.}
\end{enumerate}
\ref{LCM} is commonly referred to as
the \emph{linear conditional mean} assumption \cite[Condition 3.1]{SIR}.
For standardized $X$, it is equivalent to requiring
\new{ $ \E [ X \mid \langle v , X \rangle ] = P_v X $} \cite[Lemma 1.1]{li2018sufficient}.
Every elliptical distribution
satisfies \ref{LCM} for every $v$ \cite[Corollary 5]{CAMBANIS1981},
and conversely \cite{EATON1986}.
While it does introduce some symmetry, it is less restrictive than it may seem.
It has been shown to hold approximately in high dimension,
where most low-dimensional projections are nearly normal \cite{diaconis1984,hall1993}.
\ref{LCM} is introduced to ensure that $\hv$ is an unbiased estimate of $v$ \cite[Theorem 1]{10.2307/2669934}.
As an alternative to \ref{LCM}, one can rely on the stronger assumption
\begin{enumerate}[label = \textnormal{(SMD)}]
\myitem{\textnormal{(SMD)}} \label{SMD}
for every $ w \in \spn\{v\}^\perp $,
$ \langle w , X \rangle \mid \langle v , X \rangle $ has symmetric distribution (i.e. has same distribution as $ - \langle w , X \rangle \mid \langle v , X \rangle $).
\end{enumerate} 
%The restriction \ref{SMD} to \emph{symmetrical marginal distributions} is purely technical, and we impose it only in order to apply standard Bernstein inequalities for bounded variables \cite[Lemma 2.2.9]{vaart1996}.
%Since $X$ and $Y$ are in general unbounded,
%we will condition the statistics of interest in suitable balls of constant radius (see Section \ref{sec:index-bounds}).
%Such conditioning would in general break \ref{LCM}, but not \ref{SMD}.
%Using Bernstein concentration for bounded variables will allow us to detangle and better analyze the role of the scale parameter $l$ in our bounds.
%On the other hand, similar though less explicit bounds could be obtained
%using directly Bernstein inequalities for sub-exponential variables \cite[Lemma 2.2.11]{vaart1996},
%avoiding the conditioning and hence relaxing \ref{SMD} for the weaker \ref{LCM}.
%While in our bounds the scale parameter is encoded in an explicit variance term,
%in bounds resulting from sub-exponential Bernstein such a term would be replaced by a conditional sub-Gaussian norm,
%which is less interpretable and much more difficult to characterize.
\new{
The restriction \ref{SMD} to \emph{symmetrical marginal distributions}
will be in fact our working assumption.
Assuming \ref{SMD} rather than \ref{LCM} is for us a purely technical choice,
dictated by our goal of disentangling the dependence of the SVR estimator $\hv_\el$ on the slicing parameter $\el$.
To be more specific, our motivation stems from the application of the Bernstein inequality to concentrate the sample statistics to their population limits.
Since $X$ and $Y$ will be assumed to be sub-Gaussian, and therefore possibly unbounded,
we may not use the standard Bernstein inequality for bounded variables \cite[Lemma 2.2.9]{vaart1996},
and we should rather use the sub-exponential version \cite[Lemma 2.2.11]{vaart1996}.
As it turns out, the slicing parameter would be encoded
in the sub-Gaussian norm of $X$ conditioned on a slice,
which is however an implicit quantity, difficult to characterize in general.
On the other hand, Bernstein for bounded variables, although less general, provides a sharper bound,
with probability scaling with the variance rather than the sub-Gaussian norm.
Consequently, it allows to encode the slicing parameter in an explicit variance term.
But in order to apply Bernstein for bounded variables,
and thus obtain a more interpretable bound,
we need to condition the statistics of interest in suitable balls of constant radius (see Section \ref{sec:index-bounds}).
Such conditioning would in general break \ref{LCM}, but not \ref{SMD},
hence the need to assume \ref{SMD}.
}

\newnew{
Although stronger than \ref{LCM}, the condition \ref{SMD} is still satisfied by all elliptical distributions,
as well as other types of symmetric distributions along the index direction.
While ellipticity serves often as a simplified model for the analysis of conditional methods,
it is not necessarily crucial when it comes to applications.
In particular, the performance of SVR seems to be robust to even considerable deviations from such assumptions,
as we illustrate experimentally in Section \ref{s:numestv}.
}

%\newnew{
%Although our working assumption \ref{SMD} is stronger than the classic \ref{LCM},
%they are both satisfied by widely used class of spherical distributions, and thus by standardized elliptical distributions.
%}

In addition to \ref{LCM} or \ref{SMD}, second order methods usually require the so-called \emph{constant conditional variance} assumption \cite[p. 2117]{SAVE}:
\begin{enumerate}[label = \textnormal{(CCV)}]
\item \label{CCV} $ \cov[ X \mid \langle v , X \rangle ] $ is nonrandom.
\end{enumerate}
\new{For standardized $X$}, and assuming \ref{LCM}, \ref{CCV} is equivalent to $ \cov[ X \mid \langle v , X \rangle ] = \new{P_{v^\perp}} $ \cite[Corollary 5.1]{li2018sufficient}.
\ref{CCV} is true for the normal distribution \cite[Proposition 5.1]{li2018sufficient}, and again approximately true in high dimension \cite{diaconis1984,hall1993}.
Some care is required when assuming both \ref{LCM} and \ref{CCV}:
imposing \ref{LCM} for every $v$  is equivalent to assuming spherical symmetry \cite{EATON1986},
and the only spherical distribution satisfying \ref{CCV} is the normal distribution \cite[Theorem 7]{Kelker}.
For this reason we will introduce a relaxation of \ref{CCV} in the next subsection.

We present separately bounds on the estimation of $v$ in the next Section \ref{sec:index-bounds},
and on the regression of $f$ in Section \ref{sec:regression}.
Our main result, Theorem \ref{thm:main}, will give in particular a near-optimal high probability bound on the SVR estimator for $F$, in the form
$$ \left( \E_X [ | \widehat{F}(X) - F(X) |^2 ] \right)^{1/2} \le K ( d\sqrt{\log d} + d^s) (\log n)^{s/2} \left( \frac{\log n}{n} \right)^{\frac{s}{2s+1}} $$
\new{where $F$ is $s$-H\"older continuous} with $ s \in [1/2,2] $,
and $K$ is a constant independent of $n$ and $d$.

\subsection{Index estimation bounds} \label{sec:index-bounds}

\new{
The goal of this section is to obtain finite-sample bounds on the SVR index estimate $\hv_\el$.
In order to obtain an explicit characterization with respect to the parameter $\el$,
we introduce a variant of $\hv_\el$ based on bounded statistics.
As discussed earlier in Section \ref{s:AnalysisConvergence}, similar but less interpretable bounds for SVR can be proved for unbounded statistics.
}

\new{
Consider the event
\begin{equation*} %\label{eq:ball}
\bb = \{ \|X\| \le C_X \sqrt{d} R , |Y| \le C_Y R \}
\end{equation*}
for some $ C_X , C_Y \ge 1 $ independent of $n$ and that will be considered as fixed constant from now on.
We define the following bounded versions of $ \mujk\el\h $, $ \Sijk\el\h $:
$$
 \mujk\el\h^b = \E [ X \mid Y \in {\Cijk\el\h} , \bb ]  ,
 \qquad
 \Sijk\el\h^b = \cov [ X \mid Y \in {\Cijk\el\h} , \bb ] .
$$
Given the event
\begin{equation*} %\label{eq:ball_i}
 \bb_i = \{ \|X_i\| \le C_X \sqrt{d} R , |Y_i| \le C_Y R \} , %\tag{\text{$\bb^i$}}
\end{equation*}
we define
$$
 n^b = \sum_i \II (\bb_i) , \qquad n_{\el,\h}^b = \sum_i \II ( \{ Y_i \in \Cijk\el\h \} \cap \bb_i ) .
 $$
Note that $n^b$ is a random number,
but assuming \ref{X} and \ref{Y}
it is larger than a constant fraction of $n$, with high probability (see Lemma \ref{lem:sub-gauss-ball}).
The sample counterparts of $\mujk\el\h^b$, $\Sijk\el\h^b$ are
$$
\hmujk\el\h^b = \frac{1}{n_{\el,\h}^b} \sum_i X_i \II ( \{Y_i \in {\Cijk\el\h} \} \cap \bb_i ) ,
\ \ 
 \hSijk\el\h^b = \frac{1}{n_{\el,\h}^b} \sum_i (X_i - \hmujk\el\h^b) (X_i - \hmujk\el\h^b)^T \II ( \{Y_i \in {\Cijk\el\h} \} \cap \bb_i ) .
$$
We denote by $ \vjk\el\h^b $ and $ \hvjk\el\h^b $
the eigenvectors of smallest eigenvalue of $ \Sijk\el\h^b $ and $ \hSijk\el\h^b $, respectively.
Finally,
let $ \hh_\el^b = \{ h : n_{\el,\h}^b \ge 2^{-\el} n^b \} $.
We define $ \hvc \el^b $ as the eigenvector of largest eigenvalue of
$$
\hVc\el^b = \frac{1}{\sum_{\h\in\hh_\el^b} n_{\el,\h}^b} \sum_{\h\in\hh_\el^b} \hvjk\el\h^b (\hvjk\el\h^b)^T  n_{\el,\h}^b\, .
$$
}
In accordance with $\bb$,
we pick the interval $\Ci$ in SVR as
\new{$$
\Ci = [ - C_Y R , C_Y R ] .
$$}
Furthermore, we will be assuming lower bounded conditional variance
on the distribution conditioned on $\bb$:

%Suppose \ref{X}, \ref{Y} and \ref{SMD} hold true.
%In the following, we will condition the statistics $\mujk\el\h$ and $\Sijk\el\h$ on
%\begin{equation} \label{eq:ball}
%\|X\| \le C_X \sqrt{d} R \qquad |Y| \le C_Y R
%\end{equation}
%for some constants $C_X, C_Y$.
%Since our samples $(X_i,Y_i)$'s are not drawn from this conditional distribution,
%we implement SVR discarding samples which do not verify $\bb$.
%In doing so, we only reject an arbitrarily small constant fraction of $X_i$'s and $Y_i$'s
%(as $C_X$, $C_Y$ increase)
%with probability higher than $ 1 - 4e^{-cn} $, thanks to assumptions \ref{X} and \ref{Y} and Lemma \ref{lem:sub-gauss-ball},
%while not invalidating assumption \ref{SMD}.
%Moreover, the accepted samples are still independent conditioned on verifying $\bb$.
%The number of these samples is random,
%but larger than $n/C$ for some $ C > 1 $ with probability higher than $ p = 1 - 4e^{-cn} $.
%In the process of computing the bounds, the constant $C$ will be absorbed by other constants;
%since the probability $p$ is higher than all the other probabilities we will be computing,
%it will be absorbed by them.
%As we mentioned in Section \ref{s:AnalysisConvergence}, similar but less interpretable bounds for SVR can be proved for unbounded distributions and thus without discarding points.
%In accordance with $\bb$,
%we pick the interval $\Ci$ in SVR as $ \Ci = [ - C_Y R , C_Y R ] $.
%Furthermore, we will be assuming lower bounded conditional variance
%on the distribution conditioned on $\bb$:
\begin{enumerate}[label=\textnormal{(LCV)}]
\item \label{LCV} There is $\alpha\ge1$ such that $ \var [ \langle w,X\rangle\mid \langle v,X\rangle , \bb ] \ge R^2 / \alpha $ almost surely for all $ w \in \spn\{v\}^\perp  \cap \S^{d-1} $.
\end{enumerate}
This assumption can be seen as a relaxation of the standard \ref{CCV} we mentioned in Section \ref{s:AnalysisConvergence}.
\new{It should be contrasted with \ref{X}:
while \ref{X} imposes a uniform upper bound on the variance of all marginals of $X$,
\ref{LCV} requires that the variance of all (conditional) marginals orthogonal to $v$ is uniformly lower bounded by a quantity of the same order.
This clearly helps SVR in not mistaking a generic direction of small variance with the actual direction of the index.
A practical way towards meeting \ref{X} and $\ref{LCV}$ (but also \ref{SMD}) is standardizing the data,
which is the main reason why conditional methods commonly include this preprocessing step.
Since we directly assume \ref{X} and $\ref{LCV}$ (as well as \ref{SMD}),
we did not include standardization as part of SVR,
and therefore of our theoretical analysis.}

Besides the distributional assumptions discussed so far, we introduce the following functional property:
\begin{enumerate}[label = \textnormal{($\Omega$)}]
\item \label{Om}
There are $ \omega \ge 0 $ and \new{$ \L > 0 $} such that, for every interval $T$ with $ |T| \ge \omega $,\\
$ | [ \min f^{-1}(T) , \max f^{-1}(T) ] | \le \new{\L} |T| $.
\end{enumerate}
Assumption \ref{Om} may be regarded as a large scale sub-Lipschitz property.
Note that, if $f$ is bi-Lipschitz, then \ref{Om} is satisfied with $\omega=0$.
However, $\ref{Om}$ for $\omega>0$ does not imply that $f$ is monotone; it relaxes monotonicity to monotonicity ``at scales larger than $\omega$''.

We may now state the main result for the SVR estimator of $v$:

\begin{thm}[SVR] \label{thm:SVR}
Suppose \ref{X},  \ref{Y}, \ref{Z}, \ref{Om}, \ref{SMD} and \ref{LCV} hold true.
%\new{Let $ \sigma_\el = \sigma / \inf_{h\in\hh_\el} \P \{ Y \in \Ci_{\el,h} \} $,
%or $ \sigma_\el = \sigma $ for all $\el$ if $ |\zeta| \le \sigma $ a.s.}.
Let $\el$ be such that $ 2^{-\el} \lesssim (\L\sqrt{\alpha})^{-1} $ and $ |{\Cijk\el\h}| \gtrsim \max\{\sigma , \omega\} $ for all $ h \in \new{\hh_\el^b} $.
Then, for $n$ large enough so that
%$ \frac{n}{\sqrt{\log n}} \gtrsim (t + \el + \log d) 2^{2\el} $,
$ \tfrac{n}{\sqrt{\tau\log n}} \gtrsim \alpha^2 \L^{2} (t + \el + \log d) d 2^\el $,
we have
 \begin{enumerate}[label=\textnormal{(\alph*)},leftmargin=*]
\item
$
\| \new{P_{\hvc\el^b} - P_v} \| \lesssim \alpha \L \sqrt{t + \el + \log d} \  \sqrt{\frac{ d \ 2^{-\el} }{ n / \sqrt{\tau\log n} }}
$ 
with probability higher than $ 1 - e^{-t} - n^{-\tau} $.
\label{it:SVR-P}
\end{enumerate}
Moreover, if $ \frac{n}{\log n \sqrt{\log n}} \gtrsim \alpha^2 \L^{2} d 2^{\el} $, then
\begin{enumerate}[label=\textnormal{(\alph*)},leftmargin=*,resume]
\item
$
\E [\| \new{P_{\hvc\el^b} - P_v} \|^2] \lesssim \alpha^2 \L^{2} (\el + \log d) \ \frac{d \ 2^{-\el} }{n/\sqrt{\log n}}  \,.
\label{it:SVR-E}
$
\end{enumerate}
If, furthermore, $ | \zeta | \le \sigma $ a.s., then \ref{it:SVR-P} and \ref{it:SVR-E} hold with $n/\sqrt{\log n}$ replaced by $n$ and without $n^{-\tau}$ in \ref{it:SVR-P}.

\end{thm}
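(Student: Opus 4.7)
The plan is to prove (a) by bounding $\|\hvc\el - v\|$ through a two-level Davis--Kahan argument, with (b) following by tail integration. At the population level I first establish the structural fact that under \ref{LCM} (hence also under \ref{SMD}), together with noise-signal independence, each slice covariance $\Sijk\el\h$ is block-diagonal with respect to $\R v \oplus v^\perp$, so its bottom eigenvector coincides \emph{exactly} with $v$ (up to sign) and $\Vc\el = vv^T$. This eliminates the population bias, and the entire error reduces to a stochastic fluctuation around $vv^T$.

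\textbf{Population structure.} Writing $X = v \langle v, X\rangle + P_{v^\perp} X$ and using $Y = f(\langle v, X\rangle) + \zeta$ with $\zeta \perp X$, one has $\E[P_{v^\perp} X \mid \langle v, X\rangle, Y] = \E[P_{v^\perp} X \mid \langle v, X\rangle] = 0$ by \ref{LCM}, and the tower property gives $\E[\langle v, X\rangle (P_{v^\perp} X)^T \mid Y \in \Cijk\el\h] = 0$. Thus $\Sijk\el\h$ is block-diagonal with $v$-eigenvalue $\var[\langle v, X\rangle \mid Y \in \Cijk\el\h] \lesssim (2^{-\el}/\ell)^2$ by \ref{Om} applied at $|\Cijk\el\h| \gtrsim \max\{\sigma, \omega\}$, and orthogonal eigenvalues $\geq 1/\alpha$ by \ref{LCV}. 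The assumption $2^{-\el} \lesssim \ell/\sqrt\alpha$ identifies $v$ as the bottom eigenvector with eigengap $\gtrsim 1/\alpha$. Conditioning on \eqref{eq:cond} preserves the symmetry needed here, which is the role of \ref{SMD} in allowing a bounded-variable Bernstein analysis.

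\textbf{Sample bound, aggregation, and expectation.} I would invoke the refined Davis--Kahan form $\|\hvjk\el\h - v\| \lesssim \alpha \, \|(\hSijk\el\h - \Sijk\el\h) v\|$ and bound the action on $v$ instead of the full spectral norm. For each unit $w \perp v$, the scalar $w^T(\hSijk\el\h - \Sijk\el\h) v$ is a centered empirical mean of $\langle w, X_i\rangle \langle v, X_i\rangle \II\{Y_i \in \Cijk\el\h\}$, whose per-sample variance is $\lesssim (2^{-\el}/\ell)^2/\alpha$ because $|\langle v, X_i\rangle|$ on the slice is at most $\sim 2^{-\el}/\ell$ (by \ref{Om}, with noise handled via the union bound $|\zeta_i| \lesssim \sigma\sqrt{\log n}$). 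Bernstein combined with an $\eps$-net on $\S^{d-2}$ then gives
\[
\|(\hSijk\el\h - \Sijk\el\h) v\| \lesssim \ell^{-1} 2^{-\el} \sqrt{d(t+\log d)/(\alpha n_{l,h})},
\]
which, using $n_{l,h} \gtrsim 2^{-\el} n$ (from a Bernstein bound on slice counts and the $\hh_\el$ threshold), yields $\|\hvjk\el\h - v\| \lesssim \ell^{-1}\sqrt{\alpha d(t+\log d) \, 2^{-\el}/n}$. A union bound over the $\leq 2^\el$ slices replaces $t$ by $t + \el$. Since all $\vjk\el\h$ coincide with $v$, the triangle inequality gives $\|\hVc\el - vv^T\| \leq 2\max_h\|\hvjk\el\h - v\|$, and a final Davis--Kahan on $vv^T$ (eigengap $1$) produces (a). Part (b) follows by integrating the tail, with the sample-size hypothesis keeping us in the Bernstein regime. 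The $\sqrt{\log n}$ factor stems from the union bound over $n$ noise realizations; in the bounded-noise case this step is vacuous and the factor disappears.

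\textbf{Main obstacle.} The critical point is the $2^{-\el/2}$ scaling. A naive Davis--Kahan using $\|\hSijk\el\h - \Sijk\el\h\|_{\mathrm{op}}$ would give the opposite $2^{+\el/2}$ scaling, since the per-slice sample size is only $\sim 2^{-\el} n$. The correct scaling requires exploiting the smallness of $\var[\langle v, X\rangle \mid Y \in \Cijk\el\h]$ when estimating the cross-covariances, which reduces the Bernstein variance by a factor of $(2^{-\el}/\ell)^2$. This is only possible because the population cross-covariances vanish, i.e.\ thanks to the block-diagonal structure of $\Sijk\el\h$ established above.
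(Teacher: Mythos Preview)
Your approach is essentially the paper's: a two-level Davis--Kahan argument, with the local step using the refined form $\|\hvjk\el\h - v\| \lesssim \alpha\,\|v^T(\hSijk\el\h - \Sijk\el\h)\|$ and exploiting the small conditional variance of $\langle v,X\rangle$ on each slice (via \ref{Om}) to gain the $2^{-\el}$ factor in the Bernstein variance, then aggregating and taking a union bound over the $2^\el$ slices. The paper packages the local step as Proposition~3 and the eigengap as Proposition~2, but the substance is the same; where you use an $\eps$-net over $\S^{d-2}$, the paper bounds $\E\|v^T(\tSijk\el\h - \Sijk\el\h)\|^2$ directly and applies a vector Bernstein, but both routes pick up the same $d$ and the same $2^{-2\el}$ variance scaling.

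One slip: you claim the per-sample variance of $\langle w,X\rangle\langle v,X\rangle$ on a slice is $\lesssim (2^{-\el}/\ell)^2/\alpha$, which would yield a final bound with $\sqrt\alpha$ instead of the stated $\alpha$. But \ref{LCV} is a \emph{lower} bound on the orthogonal variance, not an upper bound, so there is no $1/\alpha$ available here; the correct per-sample variance is $\lesssim R^2(2^{-\el}/\ell)^2$ (the paper gets $\E\|v^T(\tSijk\el\h-\Sijk\el\h)\|^2 \le (\#\Cijk\el\h)^{-1}dR^2\var[v^TX\mid Y\in\Cijk\el\h]$). With that correction your bound matches the theorem's $\alpha$ dependence.
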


Theorem \ref{thm:SVR} not only proves convergence for SVR, but also shows that finer scales give more accurate estimates, provided the number of local samples $n_{\el,\h}^b$ is not too small and we stay above the critical scales $\sigma$ and $\omega$, representing  the noise and the non-monotonicity levels, respectively.
Without assumption \ref{LCM}, both SIR and SVR provide biased estimates of the index vector; it is not known if such bias is removable.
Nevertheless, Theorem \ref{thm:SVR} suggests that the estimation error of SVR could be driven to $0$ by increasing $l$, only limited by the constraint of keeping the scale larger than $\max\{\sigma,\omega\}$.
On the other hand, for distributions not satisfying the assumptions above, the inverse regression curve can deviate considerably from the direction $v$, regardless of the size of the noise (see Figure \ref{fig:SIRvsUS_bias}).
In SVR, assuming for a moment monotonicity ($\omega=0$) and zero noise ($\sigma=0$),
choosing the scale parameter $\el$ according to the lower bound on $n$ yields
a $ O( n^{-2} ) $ convergence rate for the MSE, disregarding logarithmic factors.

To prove Theorem \ref{thm:SVR}, we first establish bounds on the local statistics involved in the computation of the estimator of $v$:

\begin{prop} \label{prop:C}
Suppose \ref{Z} and \ref{Om} hold true.
Let $ T \subset \Ci $ be a bounded interval with $|T|\ge\omega$.
%\new{, and let $ \sigma_T = \sigma / \P \{ Y \in T \} $}.
Then:
\begin{enumerate}[label=\textnormal{(\alph*)},leftmargin=*]
\item \label{it:suppC}
\new{For every $  i = 1,\dots,n$ and} every $ \tau \ge 1 $,
$$ \P \{ | \langle v,X_i\rangle - \E[\langle v,X \rangle \mid Y \in T , \new{\bb} ] | \gtrsim \L (|T| + \sqrt{\tau\log n}\ \sigma) \mid Y_i \in T , \new{\bb_i} \} \le 2n^{-\tau}\,. $$
If $ | \zeta | \le \sigma $ a.s., then
$$ \P \{ | \langle v,X_i\rangle - \E[\langle v,X \rangle\mid Y \in T , \new{\bb} ] | \lesssim \L (|T| + \sigma) \mid Y_i \in T , \new{\bb_i} \} =1\,. $$
\item \label{it:varC}
$ \var[ \langle v,X \rangle\mid Y \in T , \new{\bb} ] \lesssim \L^{2} (|T|^2+\sigma^2) $ .
%\new{If $ | \zeta | \le \sigma $ a.s., then
%$ \var[ \langle v,X \rangle\mid Y \in T , \new{\bb} ] \lesssim \L^{2} (|T|^2+\sigma^2) $ .}
\end{enumerate}
\end{prop}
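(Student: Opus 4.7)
Set $U := \langle v, X\rangle$, so that $Y = f(U) + \zeta$ and the event $\{Y \in T\}$ rewrites as $\{f(U) \in T - \zeta\}$. The whole argument is a direct application of \ref{Om} to appropriate thickenings of $T$, coupled with sub-Gaussian tail control on $\zeta$ from \ref{Z}.

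For part (a), I would first handle the bounded case $|\zeta|\le\sigma$. On $\{Y\in T\}$ one has $f(U)\in T_\sigma:=T+[-\sigma,\sigma]$; since $|T_\sigma|\ge |T|\ge\omega$, applying \ref{Om} to $T_\sigma$ confines $U$ to the deterministic interval $J_\sigma:=[\min f^{-1}(T_\sigma),\max f^{-1}(T_\sigma)]$, whose length is at most $|T_\sigma|/\ell\le\ell^{-1}(|\Ci|+2\sigma)$. Because $\E[U\mid Y\in T]$ sits in the convex hull of the almost-sure values of $U$, it too lies in $J_\sigma$, and hence $|U-\E[U\mid Y\in T]|\le|J_\sigma|$ almost surely, which is the bounded-noise claim. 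In the general sub-Gaussian case, I would truncate $\zeta$ at $C:=\sigma\sqrt{2\tau\log n}$, so that by \ref{Z} the complementary event has probability at most $2n^{-\tau}$; on $\{|\zeta|\le C\}$ the same argument places $U$ in an interval $J_C$ of length $\lesssim\ell^{-1}(|\Ci|+\sqrt{\tau\log n}\,\sigma)$. The remaining step is to transfer the probabilistic bound through the conditioning on $\{Y_i\in T\}$, which I would do via the explicit conditional density $\nu(u)=p_U(u)\,\P(\zeta\in T-f(u))/\P(Y\in T)$: points $u\in J_C^c$ force $\mathrm{dist}(f(u),T)>C$ and hence $\P(\zeta\in T-f(u))\le 2n^{-\tau}$.

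For part (b), I would bound $\var[U\mid Y\in T]\le\E[(U-c)^2\mid Y\in T]$ with $c$ the midpoint of $J_0:=[\min f^{-1}(T),\max f^{-1}(T)]$, an interval of length $\le|T|/\ell$ by \ref{Om}. Splitting the integral at $J_0$: the contribution from $u\in J_0$ is at most $|T|^2/(4\ell^2)$; for $u\notin J_0$, applying \ref{Om} to successive thickenings of $T$ gives $\mathrm{dist}(f(u),T)\gtrsim \ell\cdot\mathrm{dist}(u,J_0)$ modulo a $|T|/2$ correction, so the sub-Gaussian tail of $\zeta$ makes the conditional density decay at rate $e^{-c\,\ell^2\,\mathrm{dist}(u,J_0)^2/\sigma^2}$, and a Gaussian moment computation yields a contribution of order $\sigma^2/\ell^2$. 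Combining both pieces gives the stated $\ell^{-2}(|T|^2+\sigma^2)$ bound.

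The main obstacle is the sub-Gaussian case of (a): the conditioning on $\{Y_i\in T\}$ introduces a factor $1/\P(Y_i\in T)$ into any naive tail estimate. Handling this cleanly requires working with the explicit conditional density rather than a crude union bound, and relying on the fact that in the regimes where this proposition is invoked in Theorem~\ref{thm:SVR} (namely $|T|\asymp 2^{-\el}|\Ci|$ with $\P(Y\in T)$ comparable to $2^{-\el}$, and $\el$ at most logarithmic in $n$) any such penalty is absorbed by the $\sqrt{\tau\log n}$ factor in the statement.
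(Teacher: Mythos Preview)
There is a real gap in your sub-Gaussian case of (a). You show that $U:=\langle v,X_i\rangle$ lands in the interval $J_C$ on the high-probability event $\{|\zeta_i|\le C\}$, but you never control where the \emph{conditional expectation} $\E[U\mid Y\in T]$ sits. In the bounded case you correctly invoke the convex-hull argument; in the sub-Gaussian case the conditional support of $U$ given $\{Y\in T\}$ is unbounded, and the small event $\{|\zeta|>C\}$ can in principle pull the expectation arbitrarily far from $J_C$, so knowing $U\in J_C$ does not yet bound $|U-\E[U\mid Y\in T]|$. The paper addresses exactly this via a shell decomposition $Z_t=\{|\zeta|\in[\sqrt{2t}\,\sigma,\sqrt{2(t+1)}\,\sigma)\}$: one writes $\E[U\mid Y\in T]$ as a weighted average of $\E[U\mid Y\in T,\zeta\in Z_t]$, each of which is confined by \ref{Om} to an interval of length $\lesssim\ell^{-1}(|T|+\sqrt t\,\sigma)$, and then uses the sub-Gaussian weights together with $\sum_t\sqrt t\,e^{-t}<\infty$ to show the tail shells contribute only $O(\ell^{-1}\sigma)$ to the expectation. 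A single truncation at level $C$ does not substitute for this; you would need either a multi-level decomposition of this kind or a separate a priori bound on the tail moment of $U$ given $\{Y\in T\}$ to close the argument.

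For (b), your route (centering at the midpoint of $J_0$ and integrating the sub-Gaussian tail of the conditional density outside $J_0$) is different from the paper's shell decomposition but essentially equivalent; both work once the details are filled in. Your final concern about the $1/\P(Y\in T)$ penalty is legitimate and in fact applies equally to the paper's own proof, which writes the unconditional weight $\P\{\zeta\in Z_t\}$ in the law-of-total-expectation step where $\P\{\zeta\in Z_t\mid Y\in T\}$ is the correct quantity; this is a shared looseness rather than a defect specific to your approach.
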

\begin{proof}
 Let $ Z_t = (-\sqrt{2(t+1)}\sigma,-\sqrt{2t}\sigma] \cup [\sqrt{2t}\sigma,\sqrt{2(t+1)}\sigma) $ for $t\in\N$.
 To prove \ref{it:suppC} we first note that, thanks to \ref{Z},
 we have $ \zeta_i \in \bigcup_{t\le\tau\log n}Z_t $ \new{with probability higher than $ 1- 2n^{-\tau} $, for every $i$}.
Conditioned on this event and on $ Y_i \in T $, $ \langle v,X_i\rangle \in f^{-1}(T+\bigcup_{t\le\tau\log n}Z_t) $.
 On the other hand,
 $ \E [ \langle v,X\rangle\mid Y \in T , \new{\bb} , \zeta \in Z_t ] \in [ \min f^{-1}(T+Z_t) , \max f^{-1}(T+Z_t) ] $.
 It follows from assumption \ref{Om} that
$$
| \langle v,X_i \rangle- \E [ \langle v,X\rangle\mid Y \in T , \new{\bb} , \zeta \in Z_t ] | \lesssim \L ( |T| + \sqrt{\max\{t,\tau\log n\}}\sigma) .
$$
Thus, by the law of total expectation,
\begin{align*}
 | \langle v,X_i\rangle - \E[\langle v,X \rangle\,|\, Y \in T , \new{\bb} ] | & \lesssim \sum_{t=0}^\infty | \langle v,X_i\rangle - \E[ \langle v,X \rangle\,|\, Y \in T , \new{\bb} , \zeta \in Z_t ] | \P\{ \zeta \in Z_t \} \\
 & \lesssim \L \bigg( |T| + \sqrt{\tau\log n}\sigma + \sigma \sum_{t>\tau\log n} \sqrt{t} e^{-t} \bigg) \\
 & \lesssim \L (|T| + \sqrt{\tau\log n}\ \sigma) .
\end{align*}
The case where $ |\zeta| \le \sigma $ almost surely is similar and simpler.
For \ref{it:varC}, we write
\begin{align*}
 \var [ \langle v,X \rangle\mid Y \in T , \new{\bb}] & = \E [ (\langle v,X \rangle- \E[ \langle v,X \rangle\,|\, Y \in T , \new{\bb} ])^2 \,|\, Y \in T , \new{\bb} ] \\
 & = \sum_{t=0}^\infty \E [ (\langle v,X \rangle- \E[ \langle v,X \rangle\,|\, Y \in T , \new{\bb} ])^2 \,|\, Y \in T , \new{\bb} , \zeta \in Z_t ] \P \{ \zeta \in Z_t \} .
\end{align*}
Conditioned on $ \zeta \in Z_t $, assumption \ref{Om} gives
\begin{align*}
 | \langle v,X \rangle- \E[ \langle v,X \rangle\mid Y \in T , \new{\bb} ] | & \lesssim \sum_{s=0}^\infty | \langle v,X \rangle- \E[ \langle v,X \rangle\mid Y \in T , \new{\bb} , \zeta \in Z_s ] | \P\{\zeta\in Z_s \} \\
 & \lesssim \L \bigg( |T| + \sqrt{t} \sigma + \sigma \sum_{s=0}^\infty \sqrt{s} e^{-s} \bigg) \\
 & \lesssim \L (|T| + \sqrt{t}\sigma) ,
\end{align*}
whence
\begin{equation*}
  \var [ \langle v,X \rangle\mid Y \in T , \new{\bb}] \lesssim \L^{2} \bigg( |T|^2 + \sigma^2 \sum_{t=0}^\infty t e^{-t} \bigg) \lesssim \L^{2} ( |T|^2 + \sigma^2) . \qedhere
\end{equation*}
\end{proof}

\begin{prop} \label{prop:eigengap}
Suppose \ref{X}, \ref{Y}, \ref{Z}, \ref{Om}, \ref{SMD} and \ref{LCV} hold true.
Then, for every $l$ such that $ 2^{-\el} \lesssim (\L\sqrt{\alpha})^{-1} $ and $ |{\Cijk\el\h}| \ge \max\{\sigma,\omega\} $, $ h \in \new{\hh_\el^b} $,
$v$ is the eigenvector of smallest eigenvalue of \new{$\Sijk\el\h^b$}, and
$$
 \la_{d-1} ( \new{\Sijk\el\h^b} ) - \la_{d} ( \new{\Sijk\el\h^b} ) \gtrsim R^2 / \alpha .
$$
%with probability higher than $ 1 - 2 e^{-cn} $.
\end{prop}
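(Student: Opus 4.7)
The plan is to decompose the conditional covariance $\Sijk\el\h=\cov[X\mid Y\in T]$ (with $T=\Cijk\el\h$) into a rank-one part aligned with $v$ and a part whose range is contained in $\spn\{v\}^\perp$, then bound the first from above via \ref{Om} and Proposition \ref{prop:C} and the second from below via \ref{LCV}. Once the parallel contribution is strictly dominated by the perpendicular one, $v$ must be the bottom eigenvector of $\Sijk\el\h$, and the spectral gap inherits the perpendicular lower bound.

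First I would apply the law of total covariance conditionally on $\langle v,X\rangle$:
\[
\Sijk\el\h = \E\bigl[\cov[X\mid\langle v,X\rangle,Y\in T]\mid Y\in T\bigr] + \cov\bigl[\E[X\mid\langle v,X\rangle,Y\in T]\mid Y\in T\bigr].
\]
Since $Y=f(\langle v,X\rangle)+\zeta$ and $\zeta\perp X$, given $\langle v,X\rangle$ the event $\{Y\in T\}$ is an event on $\zeta$ alone, so further conditioning on it leaves the conditional law of $X$ given $\langle v,X\rangle$ unchanged. Applying \ref{LCM} to the conditional mean and using that $\cov[X\mid\langle v,X\rangle]$ annihilates $v$, we obtain
\[
\Sijk\el\h = vv^T\,\var[\langle v,X\rangle\mid Y\in T] + \E\bigl[\cov[X\mid\langle v,X\rangle]\mid Y\in T\bigr],
\]
where the first summand is rank one on $\spn\{v\}$ and the second has $v$ in its kernel.

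To finish, I would control the two pieces separately. For every unit $w\perp v$, assumption \ref{LCV} gives $w^T\Sijk\el\h w = \E[\var[\langle w,X\rangle\mid\langle v,X\rangle]\mid Y\in T]\ge R^2/\alpha$. For the coefficient along $v$, Proposition \ref{prop:C}\ref{it:varC} yields $\var[\langle v,X\rangle\mid Y\in T]\lesssim\ell^{-2}(|T|^2+\sigma^2)$; since $|T|\ge\sigma$ and $|T|\asymp R\,2^{-\el}$, this is at most $CR^2\,2^{-2\el}/\ell^2$, which the scale assumption $2^{-\el}\lesssim\ell/\sqrt\alpha$ pushes below $R^2/(2\alpha)$ after fixing the hidden constant small enough. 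Hence $\la_d(\Sijk\el\h)=v^T\Sijk\el\h v\le R^2/(2\alpha)$ while $\la_{d-1}(\Sijk\el\h)\ge R^2/\alpha$, proving both that $v$ is the smallest eigenvector and that the eigengap is $\gtrsim R^2/\alpha$. The probability $1-2e^{-cn}$ arises from the global conditioning on \eqref{eq:cond}, which by \ref{X}, \ref{Y} and Lemma \ref{lem:sub-gauss-ball} holds for at least $n/C$ samples with that probability.

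The main subtlety is the reduction $\cov[X\mid\langle v,X\rangle,Y\in T]=\cov[X\mid\langle v,X\rangle]$: $X$ and $Y$ are not independent, so one must carefully isolate the dependence through $\langle v,X\rangle$ and then use the independence of the noise $\zeta$ to discharge the extra conditioning on $\{Y\in T\}$. Once that reduction is in place, \ref{LCM}, \ref{LCV} and Proposition \ref{prop:C} combine almost automatically to yield the claim.
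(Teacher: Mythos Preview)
Your argument is correct and follows essentially the same route as the paper: both use the conditional independence of $X$ and $Y$ given $\langle v,X\rangle$ to reduce to moments conditioned on $\langle v,X\rangle$, then invoke \ref{LCM} to kill the cross terms, \ref{LCV} to lower bound the perpendicular part, and Proposition~\ref{prop:C}\ref{it:varC} to upper bound the parallel part. The only organizational difference is that you package the reduction via the law of total covariance, which gives the block decomposition $\Sijk\el\h = vv^T\var[\langle v,X\rangle\mid Y\in T] + \E[\cov[X\mid\langle v,X\rangle]\mid Y\in T]$ directly and makes it immediate that $v$ is an eigenvector; the paper instead computes the quadratic forms separately and appeals to an external reference for the eigenvector claim. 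Your version is slightly cleaner in that respect, but the substance is identical.
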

\begin{proof}
%We first lower bound $ \la_{d-1} ( \new{\Sijk\el\h^b} ) $.
We have
\begin{align*}
\new{\Sijk\el\h^b} = \Cov [ X \mid Y \in {\Cijk\el\h} , \new{\bb} ] & = \E [ XX^T \mid Y \in {\Cijk\el\h} , \new{\bb} ] - \E [ X \mid Y \in {\Cijk\el\h} , \new{\bb} ] \E [ X^T \mid Y \in {\Cijk\el\h} , \new{\bb} ] .
\end{align*}
Since $X$ is independent of $\zeta$, \eqref{e:SIMdef} implies that $X$ is independent of $Y$ given $\langle v, X\rangle$, hence
\begin{align*}
 \E [ XX^T \mid Y \in {\Cijk\el\h} , \new{\bb} ] & = \E [ \E [ XX^T \mid \langle v,X \rangle, Y \in {\Cijk\el\h} , \new{\bb} ] \mid Y \in {\Cijk\el\h} , \new{\bb} ] \\
 & = \E [ \E [ XX^T \mid \langle v,X \rangle,\new{\bb}] \mid Y \in {\Cijk\el\h} , \new{\bb} ] .
\end{align*}
For the same reason, we have
\begin{align*}
 \E [ X \mid Y \in {\Cijk\el\h} , \new{\bb} ] = \E [ \E [ X \mid \langle v,X \rangle, Y \in {\Cijk\el\h} , \new{\bb} ] \mid Y \in {\Cijk\el\h} , \new{\bb} ]
 = \E [ \E [ X \mid \langle v,X \rangle,\new{\bb}] \mid Y \in {\Cijk\el\h} , \new{\bb} ] .
\end{align*}
Since $X$ satisfies \ref{SMD}, so does $ X \mid \bb $.
Thus, in particular,
$$
 \E [ X \mid \langle v,X \rangle,\bb] = v \ \E [ \langle v,X\rangle \mid \langle v , X \rangle ,
\bb ] .
$$
Now, applying \cite[Theorem 1.a]{10.2307/2669934} to $X\mid\bb$,
we get that $v$ is an eigenvector of \new{$\Sijk\el\h^b$}.
Let $w$ be a unitary vector orthogonal to $v$.
Then
\begin{align*}
 w^T \E [ X \mid Y \in {\Cijk\el\h} , \new{\bb} ] = \new{ \E [ \langle w , v \rangle \E [ \langle v,X \rangle \mid \langle v , X \rangle , \bb ] \mid Y \in {\Cijk\el\h} , \new{\bb} ] } =  0 .
\end{align*}
Moreover, assumption \ref{LCV} gives
\begin{align*}
w^T \E [ XX^T \mid Y \in {\Cijk\el\h} , \new{\bb} ] w = \E [ \var [ \langle w,X\rangle\mid \langle v,X\rangle,\new{\bb}]  \mid Y \in {\Cijk\el\h} , \new{\bb} ] \ge R^2 / \alpha .
\end{align*}
Therefore,
\begin{align*}
 %\la_{d-1} ( \new{\Sijk\el\h^b} ) = 
 \min_{\substack{w\in\spn\{v\}^\perp \\ \|w\|=1}} w^T \Cov[ X \mid Y \in {\Cijk\el\h} , \new{\bb} ] w \ge R^2 / \alpha .
\end{align*}
To upper bound $ v^T\new{\Sijk\el\h^b}v $, note that $ | {\Cijk\el\h} | = |\Ci| 2^{-\el} \lesssim R 2^{-\el} $.
Thus, assumption \ref{Om} implies by Proposition \ref{prop:C}\ref{it:varC} that
$$
v^T \new{\Sijk\el\h^b} v \lesssim \L^{2} R^2 2^{-2\el} .
$$
We finally put together lower and upper bound.
Taking $ 2^{\el} \gtrsim \L\sqrt{\alpha} $
implies that \new{$ \la_d(\Sijk\el\h^b) $ is the eigenvalue associated to $v$ and}
yields the desired inequality.
\end{proof}

We now establish convergence in probability for the local estimators $ \new{\hvjk\el\h^b} $.

\begin{prop}[local SVR] \label{prop:localSVR}
Suppose \ref{X}, \ref{Y}, \ref{Z}, \ref{Om}, \ref{SMD} and \ref{LCV} hold true.
%\new{Let $ \sigma_\el = \sigma / \inf_{h\in\hh_\el} \P \{ Y \in \Ci_{\el,h} \} $,
%or $ \sigma_\el = \sigma $ for all $\el$ if $ |\zeta| \le \sigma $ a.s.}.
Then, for every $\el$ such that $ 2^{-\el} \lesssim (\L\sqrt{\alpha})^{-1} $ and $ |{\Cijk\el\h}| \gtrsim \max\{\sigma , \omega\} $ for all $ h \in \new{\hh_\el^b} $, for every $ \eps > 0 $ and $ \tau \ge 1 $,
\begin{align*}
\P \{ \new{ \| P_{\hvjk\el\h^b} - P_v \| } > \eps \mid \new{n_{\el,\h}^b} \}\lesssim d \left[ \exp\!\left({ - \tfrac{c \new{n_{\el,\h}^b} \eps^2}{ \alpha^2 \L^{2} d \sqrt{\tau \log n} (2^{-2\el} + 2^{-\el}\eps)}}\right) \! + \exp\!\left({ - \tfrac{c \new{n_{\el,\h}^b}}{\alpha^2 d}}\right) \right] \! + n^{-\tau} .
\end{align*}
If $ |\zeta| \le \sigma $ a.s., then
\begin{align*}
\P \{ \| \hvjk\el\h - v \| > \eps \mid \new{n_{\el,\h}^b} \} \lesssim d\left[ \exp\left({- \tfrac{c \new{n_{\el,\h}^b} \eps^2}{\alpha^2 \L^{2} d (2^{-2\el} + 2^{-\el}\eps)}}\right) \! + \exp\left({ - \tfrac{c \new{n_{\el,\h}^b}}{\alpha^2 d}}\right) \right] \, .
\end{align*}
\end{prop}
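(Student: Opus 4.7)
The plan is to combine a sharp Davis--Kahan sin-$\Theta$ perturbation bound with an \emph{anisotropic} concentration of $E := \hSijk\el\h - \Sijk\el\h$. By Proposition \ref{prop:eigengap}, on an event of probability $\ge 1 - 2e^{-cn}$ (absorbed in the final bound) $v$ is the eigenvector of smallest eigenvalue of $\Sijk\el\h$ with eigengap $\la_{d-1}(\Sijk\el\h) - \la_d(\Sijk\el\h) \gtrsim R^2/\alpha$. The asymmetric sin-$\Theta$ inequality then delivers
\[
\|\hvjk\el\h - v\| \lesssim \frac{\alpha\|Ev\|}{R^2}
\]
as soon as $\|E\| \le cR^2/\alpha$, so that the perturbed gap survives. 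Conditional on $\cnt{\Cijk\el\h}$, the problem thus reduces to two tail bounds, $\P\{\|E\| > cR^2/\alpha\}$ and $\P\{\|Ev\| > c'\eps R^2/\alpha\}$, which should yield respectively the second and first exponentials in the statement.

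For the spectral-norm tail I would apply the matrix Bernstein inequality to the centred summands $(X_i - \mujk\el\h)(X_i - \mujk\el\h)^T - \Sijk\el\h$ (after first replacing $\hmujk\el\h$ by $\mujk\el\h$ at negligible cost). Under the conditioning \eqref{eq:cond}, each summand has spectral norm $\lesssim dR^2$ and matrix variance $\lesssim dR^4 I_d$, yielding $\P\{\|E\| > cR^2/\alpha\} \lesssim d\exp(-c\cnt{\Cijk\el\h}/(\alpha^2 d))$, which is the second exponential.

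For $\|Ev\|$ the key input is the anisotropy of $X \mid Y \in \Cijk\el\h$, supplied by Proposition \ref{prop:C}: first, $|\langle v, X_i - \mujk\el\h\rangle| \lesssim \ell^{-1}2^{-\el}R\sqrt{\tau\log n}$ simultaneously for all slice samples on an event of probability $\ge 1 - 2n^{-\tau}$ (and without the $\sqrt{\tau\log n}$ factor when $|\zeta|\le\sigma$ a.s.); second, $\var[\langle v, X\rangle \mid Y \in \Cijk\el\h] \lesssim \ell^{-2}2^{-2\el}R^2$, using $|\Cijk\el\h|\gtrsim \max\{\sigma,\omega\}$. Writing
\[
(Ev)_k = \frac{1}{\cnt{\Cijk\el\h}} \sum_i (X_i - \mujk\el\h)_k \langle v, X_i - \mujk\el\h\rangle - (\Sijk\el\h v)_k,
\]
the first factor of each product is bounded by $C_X\sqrt{d}R$ from \eqref{eq:cond}, while the second carries the small variance and range just quoted. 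A scalar Bernstein per coordinate $k$, conditional on the support event, followed by a union bound over $k$ and the elementary $\|Ev\| \le \sqrt{d}\max_k|(Ev)_k|$, produces the first exponential; the residual $n^{-\tau}$ arises from the complement of the support event.

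The main technical obstacle is precisely this anisotropy bookkeeping. A naive matrix Bernstein on $Ev$ using $\|X\| \sim \sqrt{d}R$ as the dominant scale would discard the small $v$-direction variance $\ell^{-2}2^{-2\el}R^2$, losing exactly the factor that lets $\hvjk\el\h$ benefit from fine scales $\el$. Additional care items are the swap $\hmujk\el\h \leftrightarrow \mujk\el\h$ (a standard Bernstein estimate shows the cost is $o(R^2/\alpha)$) and the decoupling of the support event so that the Bernstein constants become deterministic. The bounded-noise variant follows verbatim from the deterministic support bound in Proposition \ref{prop:C}\ref{it:suppC}, which removes both the $\sqrt{\tau\log n}$ factor in the denominator and the $n^{-\tau}$ residual.
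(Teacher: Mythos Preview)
Your strategy coincides with the paper's: Davis--Kahan reduces the question to $\|v^TE\|$ (numerator) and $\|E\|$ (to preserve the gap in the denominator), Proposition~\ref{prop:eigengap} supplies the gap, and Proposition~\ref{prop:C} supplies the anisotropy in the $v$-direction. Your matrix-Bernstein treatment of $\|E\|$ delivers exactly the paper's second exponential, and the handling of the $\hmujk\el\h\leftrightarrow\mujk\el\h$ swap and of the bounded-noise variant matches as well.

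The one place your execution would miss the stated exponent is the concentration of $\|Ev\|$. Passing through $\|Ev\|\le\sqrt d\,\max_k|(Ev)_k|$ and running scalar Bernstein coordinate-wise with the crude bound $|(X_i-\mujk\el\h)_k|\le C_X\sqrt d\,R$ forces a per-coordinate variance proxy $\lesssim dR^2\cdot\ell^{-2}R^2\,2^{-2\el}$; combined with the $\sqrt d$ inflation this produces $\alpha^2\ell^{-2}d^{2}\,2^{-2\el}$ in the denominator of the exponent (variance-dominated regime), a factor $d$ worse than the claimed $\alpha^2\ell^{-2}d$. The paper avoids this loss by concentrating the \emph{vector} $v^TE$ directly: it computes the aggregate second moment
\[
\E\bigl[\|v^T(\tSijk\el\h-\Sijk\el\h)\|^2\,\big|\,Y\in\Cijk\el\h\bigr]\;\le\;\frac{1}{\cnt\Cijk\el\h}\,dR^2\,\var[\langle v,X\rangle\mid Y\in\Cijk\el\h]\;\lesssim\;\frac{1}{\cnt\Cijk\el\h}\,d\,\ell^{-2}R^4\,2^{-2\el}
\]
and pairs it with the summand-norm bound $\|v^T(X_i-\mujk\el\h)(X_i-\mujk\el\h)^T\|\lesssim\ell^{-1}R^2\sqrt{d\tau\log n}\,2^{-\el}$ in a vector Bernstein inequality. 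Replacing your coordinate-wise step by this vector argument recovers the stated bound; everything else in your plan goes through unchanged.
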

\begin{proof}
Since \new{ $ \| P_{\hvjk\el\h^b} - P_v \| = \sin \angle ( \hvjk\el\h^b , v ) \le 1 $ (see \cite[Ch. 1, Sec. 5.3, Theorem 5.5]{steward1990}) }, we can
assume $ \eps^2 \le \eps \le 1 $ whenever needed.
The Davis--Kahan Theorem  \cite[Theorem VII.3.1]{bhatia}, together with \new{\cite[Ch. 1, Sec. 5.3, Theorem 5.5]{steward1990} and} Proposition \ref{prop:eigengap}, gives
\begin{align*}
 \new{ \| P_{\hvjk\el\h^b} - P_v \| }
\le \frac{ \| v^T (\new{\hSijk\el\h^b} - \new{\Sijk\el\h^b}) \| }{ | \la_{d-1}(\new{\hSijk\el\h^b})  - \la_d(\new{\Sijk\el\h^b})| } \,.
  \end{align*}
  By Proposition \ref{prop:eigengap} and Weyl's inequality \new{\cite[Theorem 4.5.3]{vershynin_2018}} we get
 \begin{align*}
  | \la_{d-1}(\new{\hSijk\el\h^b}) - \la_d(\new{\Sijk\el\h^b}) |
  & \ge \la_d(\new{\Sijk\el\h^b}) - \la_{d-1}(\new{\Sijk\el\h^b}) - | \la_{d-1}(\new{\hSijk\el\h^b}) - \la_{d-1}(\new{\Sijk\el\h^b}) | \\
  & \gtrsim R^2/\alpha - \| \new{\hSijk\el\h^b} - \new{\Sijk\el\h^b} \| .
 \end{align*}
 We bound $ \| \new{\hSijk\el\h^b} - \new{\Sijk\el\h^b} \| $ using the Bernstein inequality \new{\cite[Theorem 5.4.1]{vershynin_2018}}.  
 First, we introduce the intermediate term
\new{
 $$
 \tSijk\el\h^b= \frac{1}{n_{\el,\h}^b} \sum_i (X_i - \mujk\el\h^b) (X_i - \mujk\el\h^b)^T \II (\{Y_i \in \Ci_{\el,\h} \} \cap \bb_i) ,
 $$
 }
 and split $ \new{\hSijk\el\h^b} - \new{\Sijk\el\h^b} $ into
 \new{
 \begin{align*}
  \hSijk\el\h^b - \Sijk\el\h^b = \tSijk\el\h^b - \Sijk\el\h^b - (\hmujk\el\h^b - \mujk\el\h^b) (\hmujk\el\h^b - \mujk\el\h^b)^T .
 \end{align*}
 }
\new{Conditioned on $\bb_i$} we have
 $ \| X_i - \new{\mujk\el\h^b} \|^2 \lesssim R^2 d $, hence
 \new{
$$
  \P \{ \| \tSijk\el\h^b - \Sijk\el\h^b \| \gtrsim R^2/\alpha \mid n_{\el,\h}^b \} \lesssim d \exp \left( -c \frac{n_{\el,\h}^b}{\alpha^2 d } \right) .
 $$
 }
Also, by similar calculations, $ \| \new{\hmujk\el\h^b} - \new{\mujk\el\h^b} \|^2 \lesssim \new{R}^2/\alpha $ with same probability.
We now apply the Bernstein inequality to concentrate $ v^T(\new{\hSijk\el\h^b} - \new{\Sijk\el\h^b}) $.
\new{
We have
\begin{align*}
 \| v^T(\hSijk\el\h^b - \Sijk\el\h^b) \| & \le 
 \| v^T(\tSijk\el\h^b - \Sijk\el\h^b) \| + | v^T(\hmujk\el\h^b - \mujk\el\h^b) | \| \hmujk\el\h^b - \mujk\el\h^b \| \\
 & \lesssim \| v^T(\tSijk\el\h^b - \Sijk\el\h^b) \| + \sqrt{d} R \ | v^T(\hmujk\el\h^b - \mujk\el\h^b) | .
\end{align*}
We start with $ \| v^T(\tSijk\el\h^b - \Sijk\el\h^b) \| $.}
By Proposition \ref{prop:C}\ref{it:suppC}, \new{conditioned on $ Y_i \in \Cijk \el\h $ and $\bb_i$} we have, with probability no lower than $ 1 - 2 n^{-\tau} $,
\begin{align*}
 | v^T (X_i - \new{\mujk\el\h^b}) | \| X_i - \new{\mujk\el\h^b} \| \lesssim \L R^2 \sqrt{d \tau \log n} 2^{-\el} ,
\end{align*}
or $ | v^T (X_i - \new{\mujk\el\h^b}) | \| X_i - \new{\mujk\el\h^b} ] \| \lesssim \L R^2 \sqrt{d} 2^{-\el} $ when $ |\zeta| \le \sigma $.
Next, we estimate the variance.
We have
 \begin{align*}
  \| v^T (\new{\tSijk\el\h^b} - \new{\Sijk\el\h^b}) \|^2 & = v^T (\new{\tSijk\el\h^b} - \new{\Sijk\el\h^b})^2 v \\
  & = v^T (\new{\tSijk\el\h^b})^2 v - v^T \new{\tSijk\el\h^b \Sijk\el\h^b} v - v^T \new{\Sijk\el\h^b \tSijk\el\h^b} v + v^T (\new{\Sijk\el\h^b})^2 v ,
 \end{align*}
hence, taking the expectation,
\begin{align*}
 \E [ \| v^T (\new{\tSijk\el\h^b} - \new{\Sijk\el\h^b}) \|^2 \mid \new{n_{\el,\h}^b} ] = \E [ v^T (\new{\tSijk\el\h^b})^2 v \mid \new{n_{\el,\h}^b} ] - v^T (\new{\Sijk\el\h^b})^2 v,
\end{align*}
where
\begin{align*}
\E [ v^T (\new{\tSijk\el\h^b})^2 v \mid \new{n_{\el,\h}^b} ] & = \frac{1}{(\new{n_{\el,\h}^b})^2} v^T \E \biggl[ \biggl( \sum_i (X_i - \new{\mujk\el\h^b})(X_i - \new{\mujk\el\h^b})^T \biggr)^2 \mid \new{n_{\el,\h}^b} \biggr] v \\
& \le \frac{1}{\new{n_{\el,\h}^b}} v^T \E [ (X\!-\!\new{\mujk\el\h^b}) \|X\!-\!\new{\mujk\el\h^b}\|^2 (X\!-\!\new{\mujk\el\h^b})^T \mid Y\in\Cijk\el\h , \new{\bb} ] v + v^T (\new{\Sijk\el\h^b})^2 v \\
& \le \frac{1}{\new{n_{\el,\h}^b}} dR^2 \E [ (v^T (X-\new{\mujk\el\h^b}))^2 \mid Y\in\Cijk\el\h , \new{\bb}] + v^T (\new{\Sijk\el\h^b})^2 v \\
& = \frac{1}{\new{n_{\el,\h}^b}} dR^2 \var [ v^T X \mid Y\in\Cijk\el\h , \new{\bb} ] + v^T (\new{\Sijk\el\h^b})^2 v .
\end{align*}
Thus, Proposition \ref{prop:C}\ref{it:varC} gives
$$
\E [ \| v^T (\new{\tSijk\el\h^b} - \new{\Sijk\el\h^b}) \|^2 \mid \new{ n_{\el,\h}^b } ] \le \frac{1}{\new{n_{\el,\h}^b}} \L^{2} d R^4 2^{-2\el} .
$$
Therefore, by the Bernstein inequality we obtain
 \begin{equation*}
 \P \{ \| v^T(\new{\tSijk\el\h^b} - \new{\Sijk\el\h^b}) \| > \alpha^{-1} R^2 \eps \mid\new{n_{\el,\h}^b} \} \lesssim d \exp \left( -c \frac{\new{n_{\el,\h}^b} \eps^2}{ \alpha^2 \L^{2} d \sqrt{\tau \log n} \ (2^{-2\el} + 2^{-\el}\eps)} \right) ,
\end{equation*}
without $ \sqrt{\tau \log n} $ if $ | \zeta | \le \sigma $.
\new{We are now left to bound $ v^T(\hmujk\el\h^b-\mujk\el\h^b) $.
By Proposition \ref{prop:C}\ref{it:suppC} we have that,
conditioned on $ Y_i \in \Cijk \el\h $ and $\bb_i$,
with probability higher than $ 1 - 2 n^{-\tau} $,
$$
 | v^T X_i | \lesssim \L R \sqrt{\tau \log n} 2^{-\el} ,
$$
or $ | v^T X_i | \lesssim \L R 2^{-\el} $ if $|\zeta|\le\sigma$.
Moreover, by Proposition \ref{prop:C}\ref{it:varC},
$$  \var [ v^T X \mid Y\in\Cijk\el\h ,\bb] \lesssim L^2 R^2 2^{-2\el} . $$
Thus, the Bernstein inequality gives
$$
 \P \{ \| v^T( \hmujk\el\h^b-\mujk\el\h^b ) \| > \alpha^{-1} d^{-1/2} R \eps \mid\new{n_{\el,\h}^b} \} \lesssim d \exp \left( -c \frac{\new{n_{\el,\h}^b} \eps^2}{ \alpha^2 \L^{2} d \sqrt{\tau \log n} \ (2^{-2\el} + 2^{-\el}\eps)} \right) ,
$$
where again the factor $ \sqrt{\tau \log n} $ can be dropped in the case of bounded noise.
}
 \end{proof}
We are finally in a position to prove Theorem \ref{thm:SVR}.
  \begin{proof}[Proof of Theorem \ref{thm:SVR}]
      \new{
Since $ \la_1(vv^T) = 1 $ and $ \la_2(vv^T) = 0 $,
  }
the Davis--Kahan Theorem \cite[Corollary 1]{SamWan}, \new{along with \cite[Ch. 1, Sec. 5.3, Theorem 5.5]{steward1990}),}  yields
 \begin{align*}
  \new{ \| P_{ \hvc \el^b } - P_v \| }
 & \lesssim
  \new{ ( \la_1(vv^T) - \la_2(vv^T) )^{-1} } \biggl\| \frac{1}{\sum_{\h\in\hh_{\el,\h}^b} \new{n_{\el,\h}^b}} \sum_{\h\in\hh_{\el,\h}^b} \new{\hvjk \el\h^b (\hvjk \el\h^b)^T n_{\el,\h}^b} - vv^T \biggr\| \\
  & = \biggl\| \frac{1}{\sum_{\h\in\hh_{\el,\h}^b} \new{n_{\el,\h}^b}} \sum_{\h\in\hh_{\el,\h}^b} (P_{\hvjk \el\h^b} - P_v) n_{\el,\h}^b \biggr\| \\
 & \le \frac{1}{\sum_{\h \in \hh_{\el,\h}^b} n_{\el,\h}^b} \sum_{h\in\hh_{\el,\h}^b} \| P_{\hvjk \el\h^b} - P_v \| n_{\el,\h}^b .
\end{align*}
\new{
For each $ h \in \hh_\el $ we have $ n_{\el,\h}^b \ge 2^{-\el} n^b $,
where $ n^b \gtrsim n $ with probability higher than $ 1 - 2 e^{-cn} $ by Lemma \ref{lem:sub-gauss-ball},
hence $ n_{\el,\h}^b \gtrsim 2^{-\el} n $.
We thus apply Proposition \ref{prop:localSVR}.
Taking the union bound over $ \h \in \hh_\el $ gives
$ \| P_{\hvjk\el\h^b} - P_v \| \le \eps $ for all $ h \in \hh_\el $ with probability higher than
 \begin{align*}
1 - C 2^{\el} d \left[ \exp\!\left({ - \tfrac{c 2^{-\el} n \eps^2}{ \alpha^2 \L^{2} d \sqrt{\tau \log n} (2^{-2\el} + 2^{-\el}\eps)}}\right) \! + \exp\!\left({ - \tfrac{c 2^{-\el} n}{\alpha^2 d}}\right) \right] \! - n^{-\tau} .
\end{align*}
We now constrain $ \eps \le 2^{-\el} $. so that the two exponentials are bounded by
\begin{align} \label{eq:int0}
 \exp\!\left({ - \tfrac{c 2^{-\el} n \eps^2}{ \alpha^2 \L^{2} d \sqrt{\tau \log n} (2^{-2\el} + 2^{-\el}\eps)}}\right) \! + \exp\!\left({ - \tfrac{c 2^{-\el} n }{\alpha^2 d}}\right)
 & \le  \exp\!\left({ - \tfrac{c 2^{-\el} n \eps^2}{ \alpha^2 \L^{2} d \sqrt{\tau \log n} 2^{-2\el}}}\right) \! + \exp\!\left({ - \tfrac{c 2^{-\el} n \eps^2}{\alpha^2 d 2^{-2\el}}}\right) \nonumber \\
 & \lesssim \exp\!\left({ - \tfrac{c n \eps^2}{ \alpha^2 \L^{2} d \sqrt{\tau \log n} 2^{-\el}}}\right) . \tag{$*$}
\end{align}
Then, for $ t > 0 $, we take $ \eps = c^{-1/2} \alpha \L \sqrt{t + \el + \log d} \ 2^{-\el/2} \sqrt{\frac{ d }{ n / \sqrt{\tau\log n} }} $,
with the constraint $ \eps \le 2^{-\el} $ translating to
$ \tfrac{n}{\sqrt{\tau\log n}} \gtrsim \alpha^2 \L^{2} (t + \el + \log d) d 2^\el $,
which leads to \ref{it:SVR-P}.
}
For \ref{it:SVR-E}, we first condition on $ \new{\mathcal{Z}} = \{ |\zeta_i| \le \sqrt{2\tau\log n} \sigma \text{ for all $i$'s} \} $ and calculate
\begin{align} \label{eq:int1}
 \E [ \| \new{P_{\hvc\el^b} - P_v} \|^2 ] - n^{-\tau} & \lesssim \int_0^1 \eps \ \P \{ \| \new{P_{\hvc\el^b} - P_v} \| > \eps \mid \new{\mathcal{Z}} \} d\eps \nonumber \\
 & = \int_0^{2^{-\el}} \eps \ \P \{ \| \new{P_{\hvc\el^b} - P_v} \| > \eps \mid \new{\mathcal{Z}} \} d\eps + \int_{2^{-\el}}^1 \eps \ \P \{ \| \new{P_{\hvc\el^b} - P_v} \| > \eps \mid \new{\mathcal{Z}} \} d\eps \nonumber \\
 & \le \int_0^{2^{-\el}} \min \left \{ 1 , 2^\el d \exp \left( -\tfrac{c (n/\sqrt{\tau\log n}) \eps^2}{\alpha^2 \L^{2} d 2^{-\el}} \right) \right\} \eps d\eps \nonumber \\
 & + \int_{2^{-\el}}^1 2^\el d \exp \left( - \tfrac{c (n/\sqrt{\tau\log n})}{\alpha^2 \L^{2} d 2^{\el}} \right) \eps d\eps \nonumber \\
 & \lesssim \alpha^2 \L^{2} d \log(2^\el d) \frac{2^{-\el}}{n/\sqrt{\tau\log n}}
 + 2^\el d \exp \left( -\tfrac{c(n/\sqrt{\tau\log n} \new{)} }{\alpha^2 \L^{2} d 2^{\el}} \right) , \tag{$**$}
\end{align}
where
\new{
for the integral $ \int_0^{2^{-\el}} $ we have used \eqref{eq:int0},
for the integral $ \int_{2^{-\el}}^1 $ we have bounded
\begin{align*}
 \exp\!\left({ - \tfrac{c 2^{-\el} n \eps^2}{ \alpha^2 \L^{2} d \sqrt{\tau \log n} (2^{-2\el} + 2^{-\el}\eps)}}\right)
& \le \exp\!\left({ - \tfrac{c 2^{-\el} n \eps^2}{ 2 \alpha^2 \L^{2} d \sqrt{\tau \log n} 2^{-\el} \eps }}\right) \\
& = \exp\!\left({ - \tfrac{c n \eps}{ 2 \alpha^2 \L^{2} d \sqrt{\tau \log n} }}\right) \\
& \le \exp\!\left({ - \tfrac{c n 2^{-\el}}{ 2 \alpha^2 \L^{2} d \sqrt{\tau \log n} }}\right) ,
\end{align*}
}
and the last inequality follows from Lemma \ref{lem:P-E}.
For $\tau=2$ and $ n $ large enough \new{so that $ \tfrac{n}{\sqrt{\log n}} \gtrsim \alpha^2 \L^{2} d 2^{-\el} \log n $,
the second term in \eqref{eq:int1} is bounded by the first one}, and we obtain \ref{it:SVR-E}.
Analogous computations for the case where $ |\zeta| \le \sigma $ lead to the final claim.
\end{proof}

\subsection{Conditional regression bounds} \label{sec:regression}

In this section we study how partitioning polynomial regression of the link function in a single-index model is affected by an estimate $\hv$ of the index vector,
where regression estimators are viewed as conditioned on $\hv$.
\new{
For simplicity, we assume that $\hv$ was trained on a separate sample of $ (X,Y) $
of cardinality $n$.
As a consequence, $\hv$ is independent of the samples $ \{ (X_i,Y_i) \}_{i = 1}^n $ used to regress the link function.
If we are given a single training set,
independence can be easily enforced by randomly splitting the data in half.
In this case, all rates are understood to scale with $n/2$,
where the factor $1/2$ is absorbed by the other absolute constants.
}

We will focus on one standard class of priors for regression functions,
namely the class $\cc^s$ of H\"older continuous functions.
We recall that a function $ g : \R^d \to \R $ is $ \cc^s$ H\"older continuous ($ g \in \cc^s $) if,
for $ s = m + \alpha $, $ m \ge 0 $ an integer and $ \alpha \in (0,1] $,
$g$ has bounded continuous derivatives up to order $m$ and
$$
 |g|_{\cc^s} = \max_{|\la|=m} \sup_{x\ne z} \frac{\partial^\la g(x) - \partial^\la g(z)}{\|x-z\|^\alpha} < \infty .
$$
The H\"older norm is defined by
$$
 \| g \|_{\cc^s} = \sum_{|\la| \le m} \| \partial^\la g \|_\infty + | g |_{\cc^s} .
$$
\new{In expectation, piecewise polynomial estimators of degree $m$ are min-max optimal on the class $ \cc^{s} $ for all $ s = m + \alpha $ with $ m \ge 0 $ and $ \alpha \in (0,1] $ \cite[Corollary 11.2]{Gyorfi}.
Our analysis will be limited to polynomials of degree $ m \in \{ 0 , 1 \} $,
and thus to functions of smoothness $ s \le 2 $}.
On the other hand, piecewise polynomials of order greater than zero are in general not optimal in high probability \cite[Section 3]{BCDD2}.
For this reason, in the case of a $\cc^s$ H\"older regression function with $s\in(1,2]$
we will assume the following regularity condition:
\begin{enumerate}[label=\textnormal{(R)}]
\item \label{R} \new{Let $\rho$ be the distribution of $X$, and let $ \rho_v $ denote the push-forward measure of $\rho$ along the map $ x \in \R^d \mapsto \langle x , v \rangle \in \R $.
For every interval $ I \subset \supp \rho_v $,
$ \var [ \langle v , X \rangle \mid \langle v , X \rangle \in I ] \gtrsim |I|^2 $.
}
%\item \label{R}
%For every interval $I$ with $ \{ x \in \R^d : \langle v , x \rangle \in I \} \subset \supp \rho $,
%$ \var [ \langle v , X \rangle \mid \langle v , X \rangle \in I ] \gtrsim |I|^2 $.
\end{enumerate}
To control the distributional mismatch of the projection $ \langle \hv , X \rangle $,
we will also make one of the following two assumptions:
\begin{enumerate}[label=\textnormal{(P1)}]
\item \label{P1} $X$ has spherical distribution.
\end{enumerate}
\begin{enumerate}[label=\textnormal{(P2)}]
\item \label{P2} \new{Let $\rho$ be the distribution of $X$, and let $ \rho_v $ denote the push-forward measure of $\rho$ along the map $ x \in \R^d \mapsto \langle x , v \rangle \in \R $.
The distribution $\rho$ has upper bounded density with bounded support,
and for every interval $ I \subset \supp \rho_v $,
$ \rho_v( I ) \gtrsim |I| $}.
\end{enumerate}
As discussed early in Section \ref{s:AnalysisConvergence},
spherical distributions \ref{P1} provide a customary model for conditional regression estimators
and cover, but are not limited to, the Gaussian distribution.
On the other hand, the class \ref{P2} includes a variety of regular densities on compact normal domains, where no special symmetry is required.

We now state our main theorem:
\begin{thm} \label{thm:main}
Assume \ref{X} and \ref{Z}.
Furthermore, assume either \ref{P1} or \ref{P2}.
Let $\hv$ be an estimator for $v$ such that, for every $ \eps > 0 $,
\begin{enumerate}[label = \textnormal{($\widehat{\text{V}}$)}]
\item \label{J} $ \P \{ \| \new{P_\hv} - \new{P_v} \| > \eps \} \le A \exp( - n \eps^2 / B) $ 
\end{enumerate}
for some $ A , B \ge 1 $ possibly dependent on $d$ and specific parameters.
Suppose $ f \in \cc^s $ with $ s \in [1/2,2] $,
and assume \ref{R} in the case $ s>1 $.
Let $ \widehat{F}_{j|\hv} $ be a piecewise constant $(s\le1)$ or linear $(s>1)$ estimator of $F$ at scale $j$ conditioned on $\hv$
\new{and truncated at $ M = \|F\|_\infty $},
as defined in Section \ref{sec:hf} for $ \langle \hv , x \rangle \in I = [-\ar,\ar] $, and $0$ outside.
Then, setting $ 2^{-j} \asymp \sqrt{B} (\log n / n)^{1/(2s+1)} $ and $ r \asymp \sqrt{2d\log n} R $ we have:
 \begin{enumerate}[label=\textnormal{(\alph*)},leftmargin=*]
 \item \label{it:main-prob} For every $ \nu > 0 $ there is $ c_\nu(d,B,R,\|f\|_{\cc^s},s) \ge 1 $ such that \\
 $
  \P \left\{ ( \E_X [ | \hFjhv(X)- F(X) |^2 ] )^{\frac1 2} > \new{c_\nu} (\log n)^{s/2} \left( \frac{\log n}{n} \right)^{\frac{s}{2s+1}} \right\} \lesssim A n^{-\nu}
 $.
% for some $ \kappa(d,B,R,\|f\|_{\cc^{s}},s) $.
 \Item \label{it:main-exp}
 $
 \E [ | \hFjhv(X) - F(X) |^2 ] \le K (\log n)^s \left( \frac{\log n}{n} \right)^{\frac{2s}{2s+1}}
 $ \\

\vspace{\baselineskip}
 for some $ K = K(d,A,B,R,\|f\|_{\cc^{s}},s) $.
 \end{enumerate}
% \vspace{\baselineskip}
The dependence of all constants upon $d$, $A$ and $B$ is polynomial.
 \end{thm}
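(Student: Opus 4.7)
The plan is to decompose the error pointwise as
\begin{align*}
\hFjhv(X) - F(X) = \bigl[\hfjhv(\langle \hv, X\rangle) - f(\langle \hv, X\rangle)\bigr] + \bigl[f(\langle \hv, X\rangle) - f(\langle v, X\rangle)\bigr]
\end{align*}
and bound the two summands separately in $L^2(\rho)$. I work on the event $\mathcal{E}_1 = \{\|\hv - v\| \le \sqrt{B\nu \log n/n}\}$, which by \ref{J} has probability at least $1 - An^{-\nu}$, intersected with an event $\mathcal{E}_2$ controlling $\max_i \|X_i\|$ and $\max_i |\zeta_i|$ up to $\sqrt{\log n}$-factors, which by \ref{X} and \ref{Z} holds with large polynomial-in-$n$ probability.

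For the \emph{projection-mismatch} term, H\"older regularity gives $|f(\langle \hv, X\rangle) - f(\langle v, X\rangle)| \lesssim \|f\|_{\cc^s} |\langle \hv - v, X\rangle|^{\min(s,1)}$, with a Lipschitz-plus-Taylor-remainder bound when $s > 1$. Jensen's inequality, applied to the concave $x \mapsto x^s$ for $s \le 1$, together with the sub-Gaussian bound $\E_X |\langle \hv-v,X\rangle|^2 \lesssim \|\hv-v\|^2 R^2$ from \ref{X}, controls the $L^2(\rho)$ norm of this term on $\mathcal{E}_1$ by $\lesssim \|f\|_{\cc^s} (B\nu R^2 \log n/n)^{s/2}$. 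A direct comparison shows that this is dominated by the target rate $(\log n)^{s/2}(\log n/n)^{s/(2s+1)}$ throughout $s \in [1/2, 2]$.

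For the \emph{1D regression} term, I condition on $\hv$ and treat $\hfjhv$ as a standard partitioning polynomial least squares on the projected samples $(\langle \hv, X_i\rangle, Y_i)$ at scale $2^{-j}$. Under \ref{P1} (spherical $X$, so $\langle \hv, X\rangle$ and $\langle v, X\rangle$ share the same marginal) or \ref{P2} (upper- and lower-bounded projected density), and using closeness of $\hv$ to $v$, the marginal density of $\langle \hv, X\rangle$ is comparable to that of $\langle v, X\rangle$ on $[-r, r]$, so a multiplicative Chernoff bound yields $\gtrsim n 2^{-j}$ samples per cell with high probability. A standard bias-variance decomposition (as in Chapters 10--11 of \cite{Gyorfi}) gives per-cell bias $\lesssim \|f\|_{\cc^s}^2 2^{-2js}$ and per-cell variance $\lesssim \sigma^2/(n2^{-j})$, balanced by the prescribed choice $2^{-j} \asymp \sqrt{B}(\log n/n)^{1/(2s+1)}$. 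For $s > 1$ the local linear fit requires a well-conditioned local Gram matrix, and this is exactly the role of \ref{R}: the lower bound $\var[\langle v, X\rangle \mid \langle v, X\rangle \in I_{j,k}] \gtrsim 2^{-2j}$ transfers by continuity to $\hv$ and yields the required control on the smallest local eigenvalue.

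The main obstacle is the \emph{dependence} between $\hv$ and the samples entering the partitioning regression, since the cell memberships $\{\langle \hv, X_i\rangle \in I_{j,k}\}$ are correlated with the data-driven $\hv$. I would resolve this by a stability argument: on $\mathcal{E}_1 \cap \mathcal{E}_2$, a sample can only switch cell relative to the oracle $\hv=v$ configuration when $|\langle \hv - v, X\rangle| \gtrsim 2^{-j}$, so at most $O(n\|\hv - v\| r \cdot 2^j)$ samples differ; this is of lower order at the chosen $j$, and the conditional analysis with fixed $\hv$ carries over up to negligible corrections. Finally, (b) follows from (a) by tail integration $\E[Z^2] = \int_0^\infty 2t\,\P\{Z > t\}\,dt$, combined with a deterministic $L^\infty$ bound on $\hfjhv$ coming from the truncation at $r$ and the sub-Gaussianity of $Y$, which handles the extreme-tail regime and contributes the extra $(\log n)^s$ factor appearing in (b).
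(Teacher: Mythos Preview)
Your top-level split into a projection-mismatch term and a 1D-regression term matches the paper. The gap is in the 1D term. You treat $\hfjhv(\langle\hv,X\rangle)-f(\langle\hv,X\rangle)$ as a standard partitioning regression error, but the samples satisfy $Y_i=f(\langle v,X_i\rangle)+\zeta_i$, not $f(\langle\hv,X_i\rangle)+\zeta_i$, so the conditional regression target is $g(t)=\E[f(\langle v,X\rangle)\mid\langle\hv,X\rangle=t]\ne f(t)$; the claim ``per-cell bias $\lesssim\|f\|_{\cc^s}^2 2^{-2js}$'' does not follow from the standard theory in \cite{Gyorfi} without first controlling $|g-f|$, which already requires comparing conditional distributions under $\hv$ versus $v$. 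Your stability argument for the dependence is the right intuition but quantitatively insufficient: the fraction of samples that change cells between the $v$- and $\hv$-partitions is $\asymp 2^j\|\hv-v\|$, and on the natural good event $\|\hv-v\|\lesssim 2^{-j}$ this is only $O(1)$ --- a constant fraction, not $o(1)$. For $s=1/2$ one has $2^{-j}\asymp(\log n/n)^{1/2}$, exactly the scale of $\|\hv-v\|$ under \ref{J}, so essentially every cell can have a constant fraction of its membership flip; the ``negligible corrections'' claim fails.

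The paper resolves both issues simultaneously by inserting the \emph{oracle} estimator $\hfjv$ (same samples, true direction $v$) and decomposing further as
\[
|f(\langle\hv,x\rangle)-\hfjhv(\langle\hv,x\rangle)|
\le |f-\fjv| + |\fjv-\hfjv| + |\hfjv-\hfjhv| .
\]
The first two terms are now a genuine bias--variance pair for regressing $f$ in the true $v$-direction and are handled by standard arguments. The third term $|\hfjv-\hfjhv|$ isolates the $\hv$-versus-$v$ discrepancy and is the subject of Proposition~\ref{prop:hFhv-hFv}, whose key ingredient is a Wasserstein bound $W_1(\rho_v(\cdot\mid E_{j,k|v}),\rho_v(\cdot\mid E_{j,k|\hv}))\lesssim r\|\hv-v\|$ (Lemma~\ref{lem:W1}) under \ref{P1} or \ref{P2}. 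This replaces the cell-switching count by a comparison of conditional \emph{means} under the two directions, and yields the H\"older-type control $|\hfjv-\hfjhv|\lesssim t|f|_{\cc^\alpha}(r\|\hv-v\|)^{1/(2-\alpha)}+\eps$ (conditioned on $\|\hv-v\|\le t2^{-j}$) that drives the final rate. This Wasserstein step is the missing technical idea in your proposal.
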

 
 Theorem \ref{thm:main} shows that partitioning poliynomial estimators \new{(of degree $0$ and $1$)} achieve the $1$-dimensional min-max convergence rate (up to logarithmic factors) \new{(for $s$-H\"older functions with $s\in[1/2,2]$)} when conditioned on \emph{any} $\sqrt{n}$-convergent estimate of $v$,
and thus, \emph{in particular}, on the estimate $\widehat v$ obtained with SVR
(under its assumptions).
The same conclusion follows for other prominent index estimators,
including conditional methods such as SIR, SAVE, SCR and DR.
Although formally our theorem requires non-asymptotic $\sqrt{n}$-convergence to the index,
and only asymptotic $\sqrt{n}$-consistency has been established for the aforementioned methods,
finite-sample bounds can be derived as well with similar arguments to the ones employed in Section \ref{sec:index-bounds}.

\new{
The reader will note that the parameter choices made in Theorem \ref{thm:main}
depend on quantities which might be unknown,
notably the bound $ \|F\|_\infty $ in the truncation level $M$
and the smoothness $s$ in the polynomial degree $ m $ and scale $ 2^{-j} $.
While such results are customary in statistical learning theory,
as they still provides the existence of an optimal choice for the parameters,
and thus of a suitable range for cross-validation,
we briefly discuss here this limitation.
The choice $ M = \| F \|_\infty $ is made mostly for simplicity.
In many applications, at least a proxy $ M \ge \|F\|_\infty $ is known.
For tight proxies $ \|F\|_\infty \le M \lesssim \|F\|_\infty $,
the bounds in Theorem \ref{thm:main} hold true up to an additional constant factor.
Otherwise, the constants $c_\nu$ and $K$ would also depend on $M$.
For what concerns the knowledge of $s$, one could use again an upper bound.
An algorithmic solution would instead be trying polynomials of increasing degree
and cross-validate the scale.
Smoothness-independent theoretical bounds are possible
choosing the partition adaptively by wavelet-thresholding techniques \cite{BCDDT1,BCDD2,GMRARegression}.
}

To prove Theorem \ref{thm:main} we first show that, with high probability,
a conditional polynomial estimator \new{(of degree $0$ or $1$)} differs from an oracle estimator (possessing knowledge of $v$)
by the angle between $\widehat v$ and $v$:
\begin{prop} \label{prop:hFhv-hFv}
Assume \ref{X} and \ref{Z}.
Furthermore, assume either \ref{P1} or \ref{P2}.
Suppose $ f \in \cc^\alpha $ with $ \alpha \in [1/2,1] $.
Let $\hv$ be an estimate of $v$ \new{with $ \langle \hv , v \rangle \ge 0 $}.
For $ u \in \{ v , \hv \} $,
let $ \widehat{F}_{j|u} $ be a piecewise constant ($ \alpha \le 1 $) or linear ($ \alpha = 1 $) estimator of $F$ at scale $j$ conditioned on $u$
\new{and truncated at $ M = \|F\|_\infty $},
as defined in Section \ref{sec:hf} for $ \langle u , x \rangle \in I = [-\ar,\ar] $, and $0$ outside.
Assume \ref{R} in case of a linear estimator.
Then, for every $ \eps > 0 $, $\ar \ge 1 $ and conditioned on $ 2^{-j} \ge \| \hv - v \|/t $ for some $ t \ge 1 $,
we have
\begin{align*}
\big (\E_X[ | \hfjhv(\langle \hv , X \rangle) - \hfjv(\langle \hv , X \rangle) |^2 \II \{ X \in B(0,\ar) \} ]\big)^{\frac1 2}
\lesssim t |f|_{\cc^\alpha} \ar^{\frac{1}{2-\alpha}} \| \hv - v \|^{\frac{1}{2-\alpha}} + \eps
\end{align*}
 with probability higher than $ 1 - C \#\K j \exp\left(-\frac{c \ n \eps^2}{\#\K j t^2 \|f\|_{\cc^\alpha}^2 \ar^{2\alpha}} \right) - 2n \exp(-\ar^2/2dR^2) $~.
\end{prop}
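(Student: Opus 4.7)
Since $\hfjhv$ and $\hfjv$ share the partition $\{I_{j,k}\}_{\k\in\K\j}$, the squared $L^2$ error splits bin-by-bin as
\[
\E_X \bigl[ |\hfjhv(\langle\hv,X\rangle) - \hfjv(\langle\hv,X\rangle)|^2 \II\{X\in B(0,\ar)\} \bigr] = \sum_{\k\in\K\j} p_\k\, |\hfjkhv - \hfjkv|^2,
\]
with $p_\k = \P\{\langle\hv,X\rangle\in I_{\j,\k},\, X\in B(0,\ar)\}$. My plan is to reduce to controlling each scalar $|\hfjkhv-\hfjkv|$ via
\[
\hfjkhv - \hfjkv = (\hfjkhv - \mu_{\hv,\k}) - (\hfjkv - \mu_{v,\k}) + (\mu_{\hv,\k} - \mu_{v,\k}),
\]
where $\mu_{u,\k} = \E[Y\mid \langle u,X\rangle\in I_{\j,\k},\, X\in B(0,\ar)]$ for $u\in\{v,\hv\}$. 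The first two addends are sample fluctuations; the last is the population bias induced by the direction error.

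For the sample fluctuations, I would first discard the event that some $X_i$ exits $B(0,\ar)$, which by \ref{X} has probability at most $2n\exp(-\ar^2/2dR^2)$ and accounts for the second tail term in the statement. On the complement, $|Y_i|\lesssim \|f\|_{\cc^\alpha}\ar^\alpha$ becomes effectively bounded (after a standard high-probability truncation of $\zeta_i$ using \ref{Z}). Bernstein's inequality applied on each bin---with $\asymp n/\#\K\j$ samples, the bin-scale constraint $2^{-\j}\ge \|\hv-v\|/t$ entering as the $t^2$ variance inflation---then delivers $|\hfjkhv-\mu_{\hv,\k}|\le \eps/\sqrt{\#\K\j}$ (and likewise for $\hfjkv$) with failure probability $\exp(-cn\eps^2/(\#\K\j\, t^2\|f\|_{\cc^\alpha}^2\ar^{2\alpha}))$. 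A union bound over bins yields the first tail term; squaring and summing against $\sum_\k p_\k \le 1$ contributes the additive $\eps$ in the final $L^2$ bound.

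The heart of the argument, and the main obstacle, is bounding the population bias $\mu_{\hv,\k}-\mu_{v,\k}$ so as to recover the exponent $1/(2-\alpha)$. Under \ref{P1} the spherical symmetry forces $\langle v,X\rangle$ and $\langle\hv,X\rangle$ to share their marginal law, so $\mu_{v,\k}=\E[f(\langle\hv,X\rangle)\mid\langle\hv,X\rangle\in I_{\j,\k}]$ and the bias rewrites as $\E[f(\langle v,X\rangle)-f(\langle\hv,X\rangle)\mid\langle\hv,X\rangle\in I_{\j,\k}]$; H\"older continuity combined with $|\langle v-\hv,X\rangle|\le \ar\|\hv-v\|$ on $B(0,\ar)$ then produces the pointwise bound $|f|_{\cc^\alpha}(\ar\|\hv-v\|)^\alpha$. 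Under \ref{P2}, I would instead integrate against the symmetric difference of the slabs $\{\langle v,X\rangle\in I_{\j,\k}\}$ and $\{\langle\hv,X\rangle\in I_{\j,\k}\}$: lower-boundedness of the density gives its mass $\lesssim \ar\|\hv-v\|$, and dividing by the bin mass $\asymp 2^{-\j}$ and multiplying by the $f$-oscillation $|f|_{\cc^\alpha}(\ar\|\hv-v\|)^\alpha$ on the boundary strip produces the competing ``averaged'' bound $|f|_{\cc^\alpha}\ar\|\hv-v\|\cdot 2^{\j(1-\alpha)}$. Interpolating between these two complementary bounds and invoking $2^{-\j}\ge \|\hv-v\|/t$ to trade $2^{\j(1-\alpha)}$ for a power of $\|\hv-v\|^{-1}t$ yields exactly the exponent $1/(2-\alpha)$ with the $t$ prefactor absorbing the slack of the constraint; since the bound is uniform in $\k$, summing and weighting by $p_\k$ preserves it.

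Finally, the piecewise linear case ($\alpha=1$) requires one additional input: assumption \ref{R} lower bounds $\Var[\langle v,X\rangle\mid\langle v,X\rangle\in I_{\j,\k}]$ by $|I_{\j,\k}|^2$, which ensures the Gram matrix of the local least-squares linear fit is well-conditioned, so that scalar control of the local means propagates to control of the fitted slope and intercept and hence of the pointwise evaluation. Collecting the failure probabilities of the sub-Gaussian exit event, the noise truncation, and the per-bin Bernstein union bound then completes the proof.
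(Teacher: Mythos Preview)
Your overall architecture---the bin-by-bin split, the sub-Gaussian excision of $\{\|X_i\|>r\}$, and Bernstein on each bin for the sampling fluctuation---matches the paper. The genuine gap is in the population bias $\mu_{\hv,k}-\mu_{v,k}$. Your two ``complementary'' bounds live under \emph{different} hypotheses: the rewriting $\mu_{v,k}=\E[f(\langle\hv,X\rangle)\mid\langle\hv,X\rangle\in I_{j,k}]$ needs the rotational invariance of \ref{P1}, while the symmetric-difference estimate needs the density control of \ref{P2}. Since the proposition assumes \ref{P1} \emph{or} \ref{P2}, you never have both bounds at once, so there is nothing to interpolate. Even granting both, your interpolation does not give $1/(2-\alpha)$: writing $A\asymp\|\hv-v\|^\alpha$ and $B\asymp\|\hv-v\|\,2^{j(1-\alpha)}$, any geometric mean $A^\theta B^{1-\theta}$ followed by the substitution $2^j\le t/\|\hv-v\|$ collapses to $t^{(1-\alpha)(1-\theta)}\|\hv-v\|^\alpha$, i.e.\ exponent $\alpha$ again. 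Moreover, your bound $B$ is mis-derived: on the symmetric difference the relevant quantity is $|f-c|$ for a \emph{single} center $c$, which is of order $|f|_{\cc^\alpha}(r2^{-j})^\alpha$ (the bin-scale oscillation), not the strip-scale oscillation $|f|_{\cc^\alpha}(r\|\hv-v\|)^\alpha$.

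The paper closes this gap with a different mechanism: it recognizes $\bar y_{j,k|\hv}-\bar y_{j,k|v}$ as a Kantorovich-type distance $|f|_{\cc^\alpha}K_\alpha$ between the push-forwards $\rho_v(\cdot\mid E_{j,k|\hv})$ and $\rho_v(\cdot\mid E_{j,k|v})$, invokes the inequality $K_\alpha\le W_1^{1/(2-\alpha)}$, and then bounds $W_1\lesssim r\|\hv-v\|$ via a dedicated lemma that treats \ref{P1} and \ref{P2} separately. This Wasserstein route is what produces the exponent $1/(2-\alpha)$ uniformly under either assumption; your argument is missing an analogue of it. Two smaller points: you assume $\asymp n/\#\K j$ samples per bin, but light bins must be discarded first (the paper restricts to bins with $\rho\gtrsim\eps^2/\#\K j|f|_{\cc^0}^2$ and bounds their complement trivially by $\eps^2$); and the linear case requires substantially more than ``control of means propagates to slope''---the paper decomposes $|\hmjkhv-\hmjkv|$ into eight terms $Q_1,\dots,Q_8$ to transfer the Wasserstein bound through the ratio $\hat q/\hat s$.
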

The proof of Proposition \ref{prop:hFhv-hFv} can be found in Appendix \ref{sec:proofsmain}.
The key tool to obtain the dependence on $\|\hv-v\|$ in the upper bound is the Wasserstein distance,
which enables to control the difference between statistics computed on the conditional distribution given $\hv$ rather than $v$.
We now proceed to prove Theorem \ref{thm:main}.
\begin{proof}[Proof of Theorem \ref{thm:main}]
\new{
First, we observe that, since $ F(x) = f ( \langle v , x \rangle ) $,
defining $ \check{f}(t) = f(-t) $
we have $ F(x) = \check{f} ( \langle - v , x \rangle ) $ as well.
Moreover, $ f \in \cc^s $ if and only if $ \check{f} \in \cc^s $,
with $ |f|_{\cc^{t}} = |\check{f}|_{\cc^{t}} $ for all $ t \le s $.
Thus, since we can always replace $v$ with $-v$ and $f$ with $\check{f}$,
in the following we may assume without loss of generality that
$ \langle \hv , v \rangle \ge 0 $,
hence $ \| \hv - v \| \le \sqrt{2} \| P_\hv - P_v \| $.
}

Let $ m = \lceil s \rceil - 1 \in \{ 0 , 1 \} $ and $ \alpha=s\wedge1 \in [\tfrac{1}{2} , 1 ] $.
We start by isolating the error outside a ball $ B(0,\ar) $:
\begin{align*}
 & \E_X [ | F(X) - \hFjhv(X) |^2 ]
 \lesssim \E_X [ | F(X) - \hFjhv(X) |^2 \II \{ \|X\| \le \ar \} ]
 + |f|_{\cc^0}^2  \P \{ \|X\| > r \} ,
 \end{align*}
 whence, using Lemma \ref{lem:sub-gauss-tail}, we get the tail bound
\begin{align} \label{eq:tail}
 \P \{ \|X\| > r \} \lesssim \exp(-\ar^2/2dR^2) \tag{T} .
\end{align}
For $ x \in B(0,r) $ we decompose
\begin{align*}
 | F(x) - \hFjhv(x) | & = | f ( \langle v , x \rangle ) - \hfjhv( \langle \hv , x \rangle ) |
\le \underbrace{| f ( \langle v , x \rangle ) - f ( \langle \hv , x \rangle ) |}_{\text{(\straighttheta1)}}
 + | f ( \langle \hv , x \rangle ) - \hfjhv( \langle \hv , x \rangle ) | ,
\end{align*}
and bound (\straighttheta1) by the angle $ \| \hv - v \| $:
\begin{align*}
 \E_X [ | f ( \langle v , X \rangle ) - f ( \langle \hv , X \rangle ) |^2 \II \{ \|X\| \le \ar \} ]
 \le |f|_{\cc^\alpha}^2 r^{2\alpha} \| \hv - v \|^{2\alpha} .
\end{align*}
Hence, from assumption \ref{J} and Lemma \ref{lem:P-E} we get
%\begin{equation} \label{eq:angle1} \tag{$\Theta1$}
\begin{align}
& \P \{ \E_X [ | f ( \langle v , X \rangle ) - f ( \langle \hv , X \rangle ) |^2 \II \{ \|X\| \le \ar \} ] > \eps^2 \} \le A \exp\left( - c \tfrac{ n \eps^{2/\alpha} }{ B \ar^2 |f|_{\cc^\alpha}^{2/\alpha} } \right) \label{eq:angle1a} \tag{$\Theta1$a} \\
& \E [ | f ( \langle v , X \rangle ) - f ( \langle \hv , X \rangle ) |^2 \II \{ \|X\| \le \ar \} ]
\le  (\log(A)B)^\alpha |f|_{\cc^\alpha}^2 \ar^{2\alpha}n^{-\alpha} \label{eq:angle1} \tag{$\Theta1$b} .
\end{align}
%\end{equation}
We further decompose
\begin{align*}
  & | f ( \langle \hv , x \rangle ) - \hfjhv( \langle \hv , x \rangle ) | \\
  \le \ & \underbrace{| f ( \langle \hv , x \rangle ) - \fjv( \langle \hv , x \rangle ) |}_{\text{(b)}}
  + \underbrace{| \fjv( \langle \hv , x \rangle ) - \hfjv( \langle \hv , x \rangle ) |}_{\text{(v)}}
  + \underbrace{| \hfjv( \langle \hv , x \rangle ) - \hfjhv( \langle \hv , x \rangle ) |}_{\text{(\straighttheta)}} ,
\end{align*}
\new{where $\fjv$ is the $m$-order population estimator of $f$ at scale $j$ conditioned on $v$, namely
\begin{align*}
 & f_{j,k|v} = \argmin_{\deg(p) \le m} \E [ | Y - p(\langle v , X \rangle) |^2 \II\{ \langle v , X \rangle \in I_{j,k} \} ] , \\
 & \fjv(t) = \sum_{k\in\kk_j} T_M[ f_{j,k|v}(t) ] \II\{t\in I_{j,k}\} .
\end{align*}}
Integrating (b) we get a bias term,
that we control exploiting the H\"older continuity of $f$ (\new{see \cite[Appendix A, Example 1]{GMRARegression}\footnote{\label{note1}\new{
\cite{GMRARegression} deals more generally with piecewise polynomial regression
on an unknown manifold $\mm$ of dimension $d$.
In our setting,
$\mm$ is a known Euclidean domain of dimension $d=1$, in fact an interval.
In particular, the dyadic decomposition \ref{it:Ijk} satisfies assumptions (A1)$\div$(A4)
in \cite{GMRARegression} with $\theta_1=1$ and $\theta_2=2\ar$.
Assumption (A5)(i) is satisfied thanks to \ref{R},
while (A5)(ii) is trivial.
}
})}:
\begin{align} \label{eq:bias}
 \E_X [ | f ( \langle \hv , X \rangle ) - \fjv( \langle \hv , X \rangle ) |^2 \II \{ \|X\| \le \ar \} ] \lesssim |f|_{\cc^s}^2 \ar^{2s} 2^{-2js} \tag{B} .
\end{align}
The variable (v) leads to a variance term,
which can be concentrated with known calculations (\new{see \cite[Proposition 2 and Lemma 5]{GMRARegression}\footref{note1}}):
%\begin{equation} \label{eq:variance} \tag{V}
\begin{align}
& \P \{ \E_X [ | \fjv( \langle \hv , X \rangle ) \! - \! \hfjv( \langle \hv , X \rangle ) |^2 \II \{ \|X\| \le \ar \} ] > \eps^2 \} \lesssim \#\K j \exp \left( -c \tfrac{n\eps^2}{\#\K j |f|_{\cc^0}^2 } \right) \label{eq:variancea} \tag{Va} \\
 & \E [ |  \fjv( \langle \hv , X \rangle ) - \hfjv( \langle \hv , X \rangle ) |^2 \II \{ \|X\| \le \ar \} ]
\lesssim |f|_{\cc^0}^2 \frac{\log( \#\K j)  \#\K j }{n} . \label{eq:variance} \tag{Vb}
\end{align}
%\end{equation}
For (\straighttheta) we condition on the event $ \| \hv - v \| \le t 2^{-j} $,
taking into account that, thanks to assumption \ref{J}, the complement has probability
\begin{align} \label{eq:angle2}
 \P \{ \| \hv - v \| > \new{t}2^{-j} \} \le A \exp(- n t^2 2^{-2j} / B) \tag{$\Theta2$} .
\end{align}
Thus, Proposition \ref{prop:hFhv-hFv} along with assumption \ref{J} and Lemma \ref{lem:P-E} gives
%\begin{equation} \label{eq:angle3} \tag{$\Theta3$}
\begin{align}
& \P \{ \E_X [ | | \hfjv( \langle \hv , X \rangle ) - \hfjhv( \langle \hv , X \rangle ) |^2 \II \{ \|X\| \le \ar \} ] > \eps^2 \mid \| \hv - v \| \le t 2^{-j} \} \nonumber \\
\lesssim \ & \#\K j \exp\left(-c \tfrac{n \eps^2}{\#\K j t^2 |f|_{\cc^\alpha}^2 \ar^{2\alpha}} \right) + A \exp\!\left( \! - \tfrac{n \eps^{2(2-\alpha)}}{B t^{2(2-\alpha)} |f|_{\cc^\alpha}^{2(2-\alpha)} \ar^{2} } \!\right) , \label{eq:angle3a} \tag{$\Theta3$a} \\
& \E [ | \hfjv(\langle \hv , X \rangle) - \hfjhv(\langle \hv , X \rangle) |^2 \II \{ \|X\| \le \ar \} \mid \| \hv - v \| \le 2^{-j} ] \nonumber \\
 \lesssim \ &  |f|_{\cc^\alpha}^2 \ar^{2\alpha} \frac{\log(\#\K j) \#\K j}{n}
 + (\log(A) B)^{\frac{1}{2-\alpha}} |f|_{\cc^\alpha}^2 \ar^{\frac{2}{2-\alpha}} n^{-\frac{1}{2-\alpha}} . \label{eq:angle3} \tag{$\Theta3$b}
\end{align}
%\end{equation}

In order to balance the tail \eqref{eq:tail}, the bias \eqref{eq:bias}, the variance \eqref{eq:variance} and the angle terms \eqref{eq:angle1}, \eqref{eq:angle2} and \eqref{eq:angle3},
we choose
$$
r = \sqrt{2d\log n} R\,, \qquad 2^{-j} \asymp \sqrt{B} (\log n / n)^{1/(2s+1)} , \qquad t = 1 ,
$$
and plug in $ \#\K j = 2^j $, which leads to
\begin{align*}
\E [ | \hFjhv(X) - F(X) |^2 ] & \lesssim |f|_{\cc^0}^2 \left(\frac{1}{n}\right)  & \eqref{eq:tail} \\
& + |f|_{\cc^\alpha}^2 R^{2\alpha} (\log(A)B d)^\alpha \left(\frac{\log n}{n}\right)^{\alpha} & \eqref{eq:angle1} \\
& + |f|_{\cc^s}^2 R^{2s} (B d)^s (\log n)^s \left(\frac{\log n}{n}\right)^{\frac{2s}{2s+1}} & \eqref{eq:bias} \\
& + |f|_{\cc^0}^2 \tfrac{1}{2s+1} \left(\frac{\log n}{n}\right)^{\frac{2s}{2s+1}} & \eqref{eq:variance} \\
& + A \exp\left( - n \left(\tfrac{\log n}{n} \right)^{\frac{2}{2s+1}} \right) & \eqref{eq:angle2} \\
& + |f|_{\cc^\alpha}^2 \tfrac{1}{2s+1} R^{2\alpha} d^\alpha (\log n)^\alpha \left(\frac{\log n}{n}\right)^{\frac{2s}{2s+1}} & \eqref{eq:angle3} \\
& + |f|_{\cc^\alpha}^2 R^{\frac{2}{2-\alpha}} (\log(A) B)^{\frac{1}{2-\alpha}} d^{\frac{1}{2-\alpha}} \left(\frac{\log n}{n}\right)^{\frac{1}{2-\alpha}} .
\end{align*}
In \eqref{eq:angle1}, $ (\log n / n)^{\alpha} \le (\log n / n)^{2s/2s+1} $
for $ \alpha = s \in [1/2,1] $, and for $ \alpha = 1 $ and $ s > 1 $.
In \eqref{eq:angle2}, $ \exp( - n (\log / n)^{2/(2s+1)} ) \le 1/n $ for $ s \ge 1/2 $.
In \eqref{eq:angle3}, $ ( \log n / n )^{1/(2-\alpha)} \le ( \log n / n )^{2s/(2s+1)} $
for $ \alpha = s \in (0,1] $, and for $ \alpha = 1 $ and $ s > 1 $.
Collecting the constants we obtain \ref{it:main-exp}.
\new{The bound \ref{it:main-prob} follows similarly from \eqref{eq:tail}, \eqref{eq:bias}, \eqref{eq:variancea}, \eqref{eq:angle1a}, \eqref{eq:angle2} and \eqref{eq:angle3a}} setting
$$
 \new{r = \sqrt{2d\log n} R\,, \quad 2^{-j} \asymp \sqrt{B} (\log n / n)^{1/(2s+1)} ,} \quad \eps = c_\nu \log^{\frac{s}{2}} n \ (\log n / n)^{s/(2s+1)} ,
 \quad t = \sqrt{c_\nu} ,
$$
and taking $c_\nu$ large enough.
\end{proof}

The additional logarithmic factors in \ref{it:main-prob} and \ref{it:main-exp}
are exclusively due to the unboundedness of the distribution
and can be avoided in the bounded case.
While here we are restricting to constant and linear estimators, hence to the smoothness range $ s \le 2 $, the results may be extended to higher order polynomials, and thus to smoother regression functions, with similar proofs.
However, from our decomposition, and specifically from \eqref{eq:angle1},
it seems not possible to maintain min-max optimality in the range $ s < 1/2 $.
We remark that this smoothness constraint does not depend on the regression technique,
that is, the term \eqref{eq:angle1} would still arise when using different regression methods than partitioning polynomials.

\section{Numerical experiments}
\label{s:NumericalExperiments}

In this section we conduct numerical experiments to demonstrate that the theoretical results above have practical relevance and to investigate how relaxations of the assumptions affect the estimators. In order to highlight specific aspects of different algorithms we use three different functions to conduct our experiments.
The first two are
\begin{equation*}
F_1(x)=\exp(\langle v,x \rangle/3)) \,,\qquad F_2(x)= F_1(x) + \sin(20 \langle v,x\rangle)/15   \,.
\end{equation*}
Both  functions are smooth. $F_1$ is monotone and thus we may choose $\omega=0$, while $F_2$ is non-monotone, thus condition \ref{Om} is satisfied only for $\omega>0$.
This allows us to explore the behavior of $\widehat v$ under monotonicity or lack thereof, and how the estimators are effected by the choice of the scales $l$ and $j$. 
To investigate the convergence rate of the regression estimator $\widehat F$, we use a monotone function $F_3$ which is piecewise quadratic on a random partition and continuous. The domain of $x$, and its dimension $d$, will be specified in each experiment.
$F_1, F_2, F_3$ are shown in Figure \ref{fig:different_functions}.

\begin{figure}[h]
\includegraphics[width= 0.6 \textwidth]{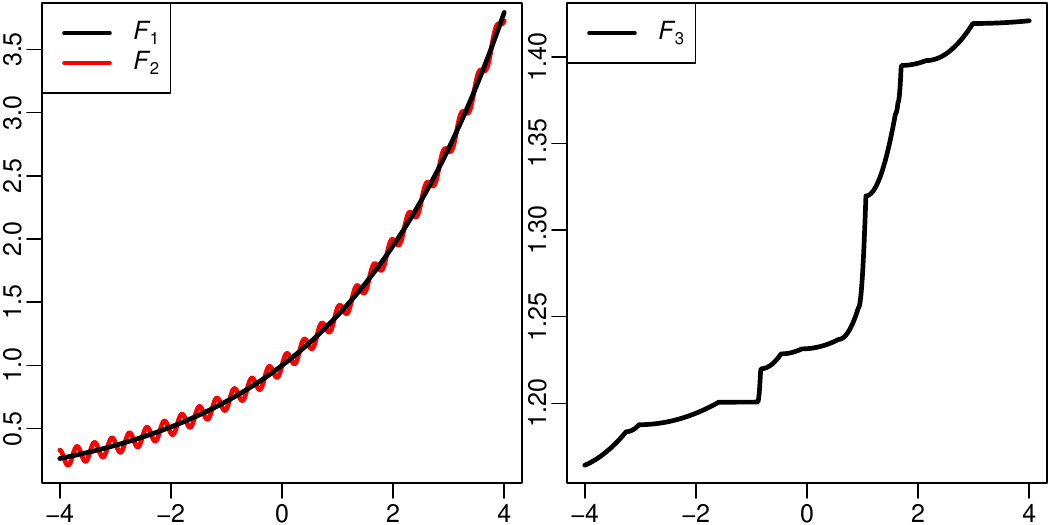}
\caption{Different functions used in the experiments, with horizontal axis representing $\langle v,x\rangle$.}
\label{fig:different_functions}
\end{figure}

\subsection{Estimating the index vector}
\label{s:numestv}
Here we compare the performances of SIR, SAVE and SVR in estimating the index vector $v$.
We consider two settings S1, S2, corresponding to two different non-elliptical, and thus non-spherical, distributions for $X$: $\rho_{X,1},\rho_{X,2}$; $\rho_{X,1}$ is a standard normal $\mathcal{N}(0,1)$ in one coordinate, and a skewed normal with shape parameter $\alpha=5$ in the other coordinate;  $\rho_{X,2}$ is uniform on the triangle with vertices (0,0),(1,1),(0,1); all distributions are normalized to have zero mean and standard deviation equal to one. Note that in both settings conditions \ref{LCM} and \ref{SMD} are not satisfied.
For each setting we draw $n=1000$ i.i.d. samples and generate the response variable $Y_i=F(X_i)+\zeta_i$ using functions $F_1$ and $F_2$, where $\zeta_i\sim\mathcal{N}(0,\sigma^2)$. We use different levels of noise setting $\sigma$ equal to the $0\%$, $1\%$ and $2\%$ of $|f(-4)-f(4)|$.
We chose $ v= (1/\sqrt{5},2/\sqrt{5})$ for setting S1, and $v=(1,0)$ for setting S2.
The results in Table \ref{tab:grad_err_and_time} show the detailed performance of SIR, SAVE and SVR for all settings, functions, and noise levels.
First, we note that the cases of $F_1$ with $1\%$ noise and $F_2$ with zero noise produce similar results.
This is consistent with the intuition manifested from Theorem \ref{thm:SVR} that noise and non-monotonicity levels play a similar role in the accuracy of the estimators.
In all settings SIR performs worse than the two other methods, SVR and SAVE, which on the other hand have similar performance, although SVR produces most of the times slightly better estimates.  
\begin{table}[h]
\caption{Performance of the different algorithms in different settings,
with $\mathrm{err}=\log_{10}(\| \widehat{v} - v \|^2)$, corresponding standard error, and average computational time in $\textnormal{seconds}/100$.  \label{tab:grad_err_and_time}
}
\ra{1.2}
\begin{tabular}{c c l r r r c r r r}
\multicolumn{3}{c}{} &  \multicolumn{3}{c}{\textbf{S1}} & &  \multicolumn{3}{c}{\textbf{S2}} \\
  \cline{4-6} \cline{8-10}
   & $\pmb{\sigma}$ &  & \multicolumn{1}{c}{\textbf{err}} & \multicolumn{1}{c}{\textbf{se}} & \multicolumn{1}{c}{\textbf{time}} & & \multicolumn{1}{c}{\textbf{err}} & \multicolumn{1}{c}{\textbf{se}} & \multicolumn{1}{c}{\textbf{time}} \\
  \cline{2-10}
   \multirow{3}{*}{$\pmb F_1$}
 & \multirow{6}{*}{$0\%$}
   & \textbf{SIR} & -3.04 & -2.83 & 0.60 & & -0.68 & -1.83 & 0.60 \\ 
    & & \textbf{SAVE} & -5.97 & -1.06 & 1.30 & & -7.41 & -7.14 & 1.40 \\ 
% & & \textbf{SCR} & -8.42 & -8.16 & 143.80 & & -8.50 & -8.30 & 118.20 \\ 
  & &  \textbf{SVR} & -6.42 & -6.06 & 1.00 & & -7.60 & -6.65 & 0.70 \\ 
  \cline{3-10}
 \multirow{3}{*}{$\pmb F_2$}  
 &
   &  \textbf{SIR} & -3.11 & -2.99 & 0.50 & & -0.67 & -1.83 & 0.50 \\ 
    & &   \textbf{SAVE} & -4.39 & -4.10 & 1.30 & & -4.13 & -4.01 & 1.30 \\ 
%     & &  \textbf{SCR} & -4.80 & -4.40 & 150.70 & & -4.00 & -3.77 & 115.40 \\ 
 & &   \textbf{SVR} & -4.41 & -4.08 & 0.90 & & -4.16 & -3.92 & 0.70 \\ 
  \cline{2-10}
 \multirow{3}{*}{$\pmb F_1$}  
 & \multirow{6}{*}{$1\%$}
   &   \textbf{SIR} & -2.97 & -2.69 & 0.50 & & -0.68 & -1.81 & 0.50 \\ 
    & &  \textbf{SAVE} & -4.57 & -4.24 & 1.40 & & -4.16 & -4.03 & 1.30 \\ 
%    & &  \textbf{SCR} & -4.85 & -4.55 & 152.70 & & -4.20 & -4.05 & 117.20 \\ 
 & &   \textbf{SVR} & -4.58 & -4.28 & 1.00 & & -4.12 & -3.75 & 0.80 \\ 
  \cline{3-10}
 \multirow{3}{*}{$\pmb F_2$}  
 &
   &   \textbf{SIR} & -2.94 & -2.72 & 0.50 & & -0.68 & -1.77 & 0.50 \\ 
    & &   \textbf{SAVE} & -4.06 & -3.57 & 1.40 & & -3.42 & -3.45 & 1.40 \\
% & &   \textbf{SCR} & -4.41 & -4.07 & 150.80 & & -3.43 & -3.44 & 117.30 \\  
  & &   \textbf{SVR} & -4.08 & -3.87 & 0.90 & & -3.45 & -3.46 & 0.80 \\ 
  \cline{2-10}
 \multirow{3}{*}{$\pmb F_1$}  
 & \multirow{6}{*}{$2\%$} 
   &    \textbf{SIR} & -2.92 & -2.67 & 0.50 & & -0.68 & -1.38 & 0.60 \\ 
    & & \textbf{SAVE} & -3.92 & -3.58 & 1.30 & & -3.08 & -3.01 & 1.40 \\ 
% & &   \textbf{SCR} & -4.20 & -3.87 & 149.90 & & -3.09 & -3.24 & 119.10 \\ 
  & &   \textbf{SVR} & -3.91 & -3.61 & 0.90 & & -3.21 & -3.06 & 0.80 \\ 
  \cline{3-10}
 \multirow{3}{*}{$\pmb F_2$}  
 &
   &    \textbf{SIR} & -2.87 & -2.64 & 0.60 & & -0.68 & -1.55 & 0.60 \\ 
    & &   \textbf{SAVE} & -3.64 & -1.98 & 1.40 & & -2.90 & -2.85 & 1.40 \\ 
%     & &   \textbf{SCR} & -4.00 & -3.68 & 150.40 & & -2.91 & -3.10 & 119.00 \\ 
 & &   \textbf{SVR} & -3.66 & -3.45 & 0.90 & & -3.02 & -2.80 & 0.80 \\ 
  \cline{2-10}
\end{tabular}
\end{table}

The poor performance of SIR in these settings requires a better explanation. In Figure \ref{fig:SIRvsUS_bias} we show graphically how the empirical inverse regression curve may drift away from $v$, resulting in a poor SIR estimate.
On the other hand, the local gradients used by SVR provide good local estimates.
This example shows how methods with higher order statistics are in general more robust to assumptions relaxations.
 
\begin{figure}[h]
\includegraphics[width= .3 \textwidth]{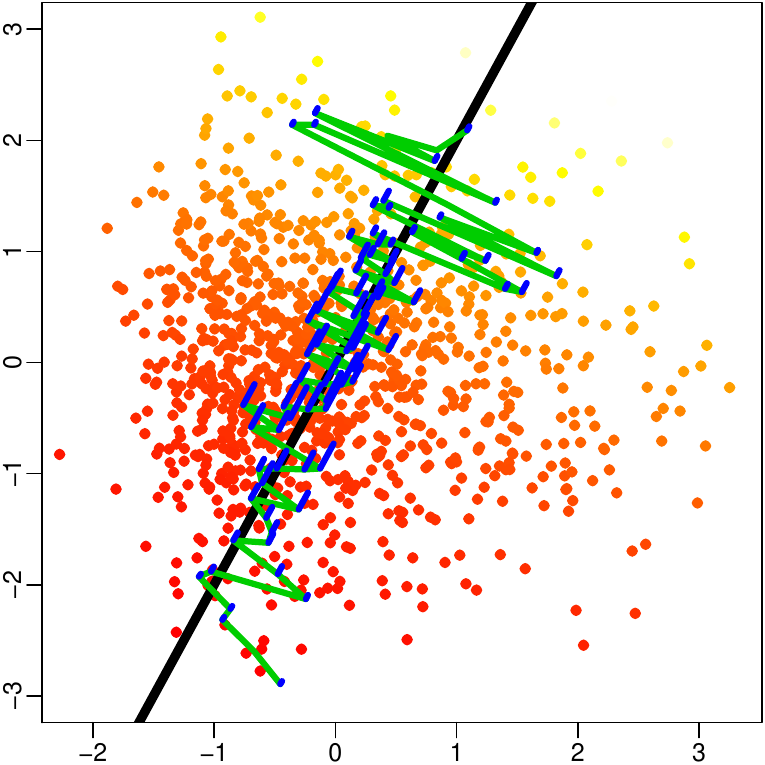}
\includegraphics[width= .3 \textwidth]{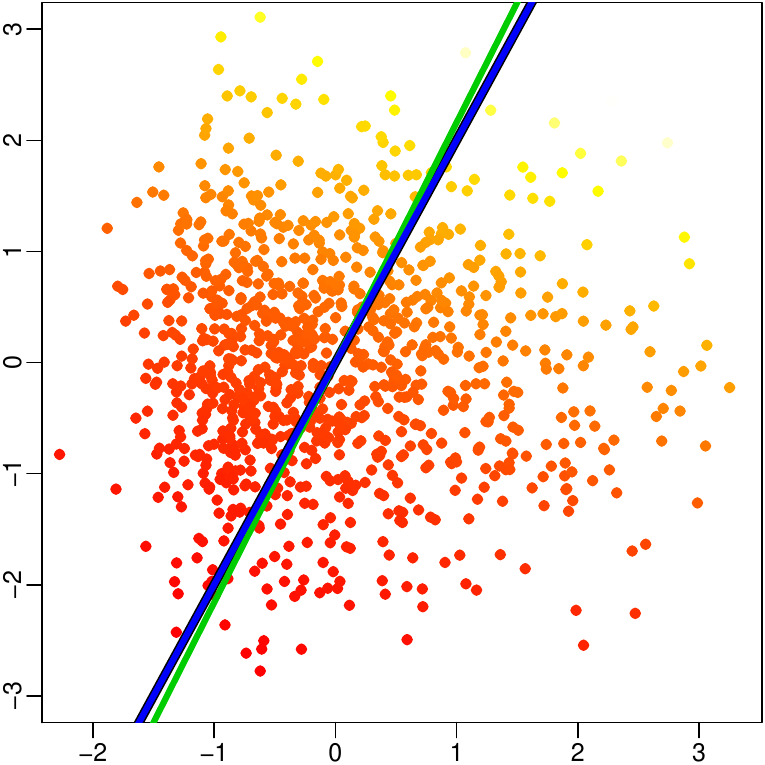}\\
\includegraphics[width= .3 \textwidth]{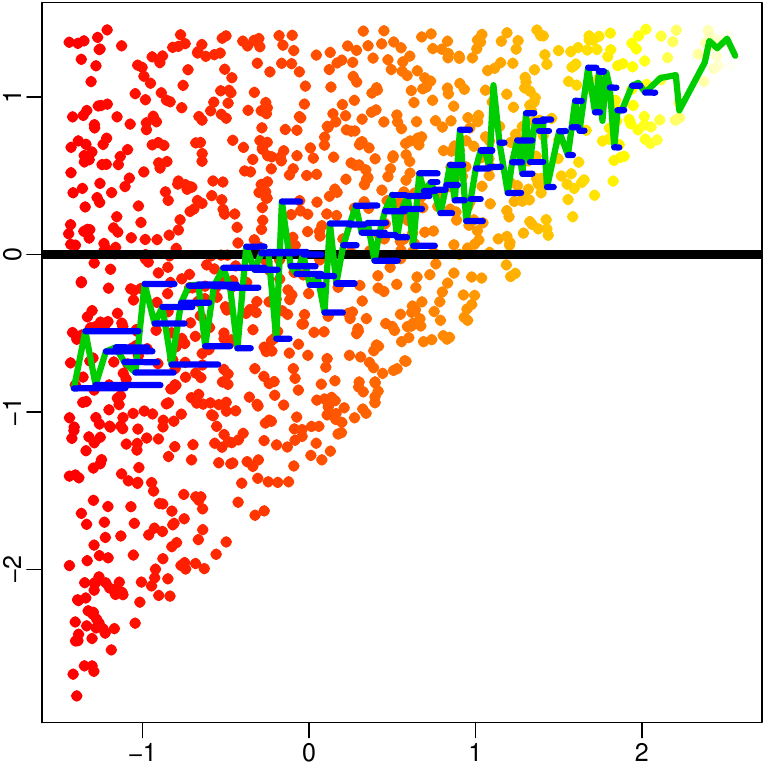}
\includegraphics[width= .3 \textwidth]{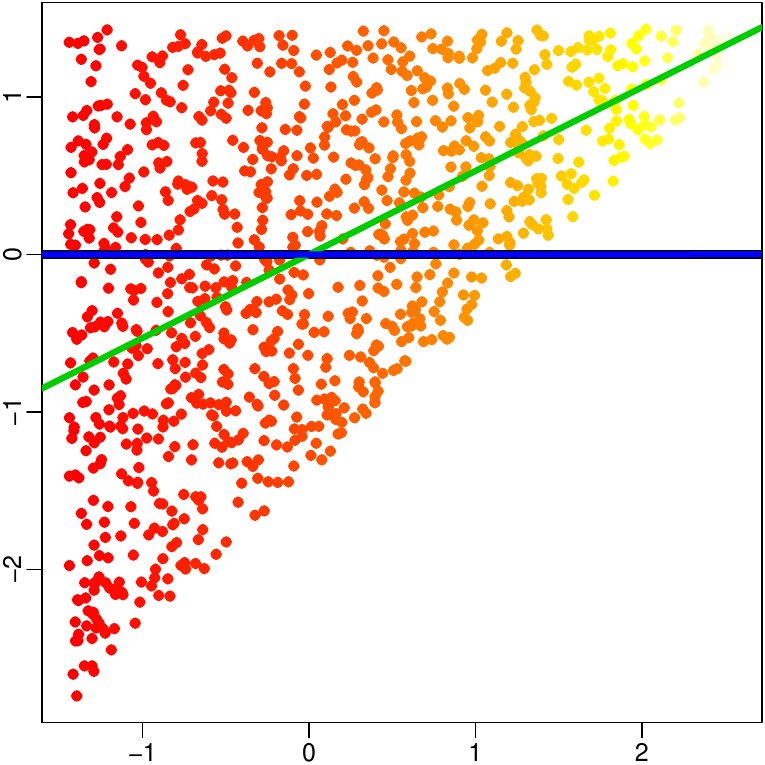}
\caption{\footnotesize Left column displays the ingredients for the estimates: the empirical inverse regression curve (green) used by SIR and the local gradients (blue), with length proportional to the number of samples in the corresponding  level set, used by SVR. Estimates of $v$ using SVR (blue) and SIR (green) are displayed on the right column. The methods are applied on setting S1 (top row) and S2 (bottom row).
The black line indicates $v$, while data points are colored according to the value of the corresponding response variable, generated with $F_2$ and $\sigma=0$, using a red-to-yellow color scale.}
\label{fig:SIRvsUS_bias}
\end{figure}

 \begin{figure}[H]
\hspace{-0.4cm}
\begin{minipage}{0.45\textwidth}
\begin{tikzpicture}
  \node (img)  {\includegraphics[width=.99\textwidth]{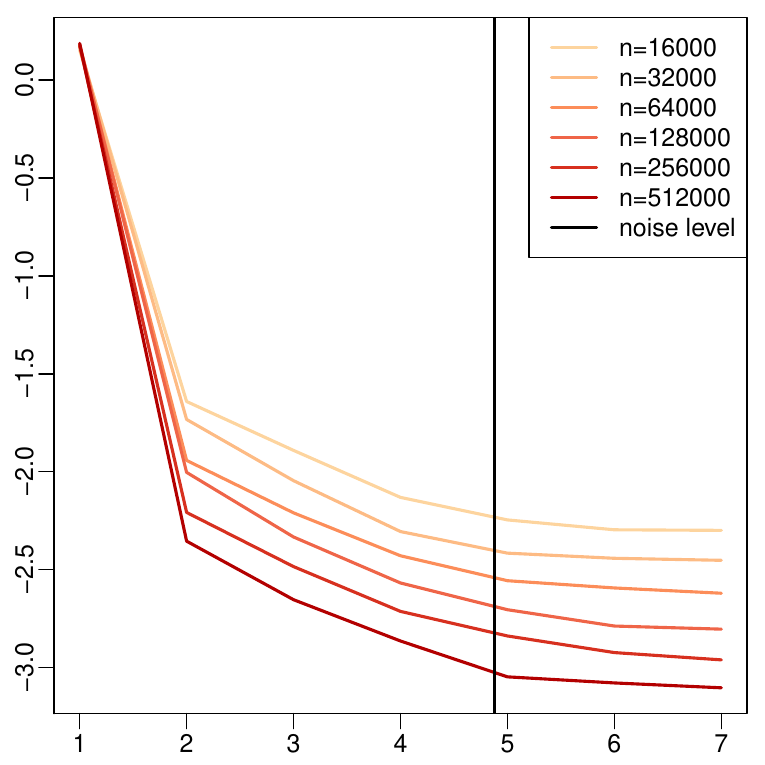}};
  \node[below=of img, node distance=0cm, yshift=1.2cm] {$l$};
  \node[left=of img, node distance=0cm, rotate=90, anchor=center,yshift=-0.9cm] {$\log_{10}(\|\widehat{v}_\el - v \| )$};
 \end{tikzpicture}
\end{minipage}%
\hspace{0.3cm}
\begin{minipage}{0.45\textwidth}
\begin{tikzpicture}
  \node (img)  {\includegraphics[width=.99\textwidth]{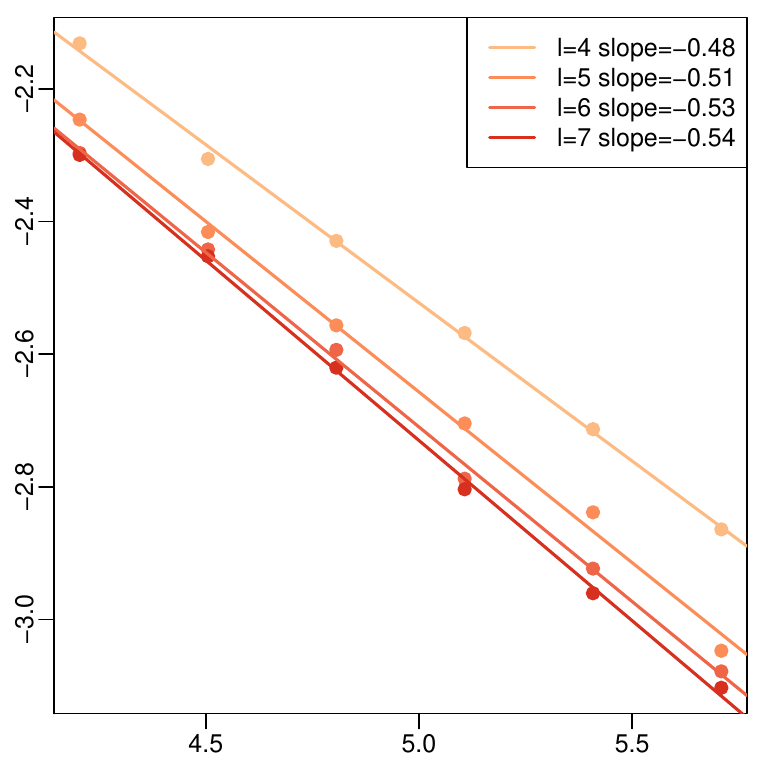}};
\node[below=of img, node distance=0cm, yshift=1.2cm] {$\log_{10}(n)$};
  \node[left=of img, node distance=0cm, rotate=90, anchor=center,yshift=-0.9cm] {$\log_{10}(\|\widehat{v}_\el - v \| )$};
\end{tikzpicture}
\end{minipage}%
\caption{Behavior of the SVR estimate $\widehat{v}_l$ with respect to scale and sample size, for regression of $F_2$ (see text). Left: error versus scale $l$. Right: error versus sample size $n$.}
\label{fig:sim_grad_rates}
\end{figure}

To investigate more extensively the performance of SVR in estimating $v$, we perform another experiment: we draw $X$ from a $10$-dimensional standard normal distribution, and to generate the response variable we use function $F_2$ plus an additive Gaussian noise with standard deviation $\sigma=0.01|f_2(-4)-f_2(4)|$. We repeat the experiment for different values of the sample size $n$. Results are shown in Figure \ref{fig:sim_grad_rates}. The left inset shows that the error in $\widehat v$ stabilizes at scales comparable to the noise level $\sigma$, which suggests that the assumption $ |{\Cijk\el\h}| \gtrsim \sigma $ is needed. The right plot shows that the rate of the error of $\widehat v$, for scales $l$ coarser than the noise level, is approximately $-\frac12$, which is again consistent with Theorem \ref{thm:SVR}.

\subsection{Estimating the regression function}
In this section we perform some experiments to support our theoretical results regarding the regression estimator obtained with SVR.
The first experiment we perform consists on drawing $X_i$, $i=1,...,n$, from a $d$-dimensional standard normal distribution and obtain $Y_i =F_3(X_i)+\zeta_i$ where $\zeta_i \sim \mathcal{N}(0,\sigma^2)$. Here we use function $F_3$ because we want to limit the function smoothness in order to obtain concentration rates comparable with the min-max rate with $s=1$.
We vary the dimension $d=5,10,50,100$, the size of the noise $\sigma$, equal to the $5\%$ and $10\%$ of $|f_3(-4)-f_3(4)|$.
To investigate the convergence rates of the estimator we repeat each experiment for different sample sizes $n$. 
In Figure \ref{fig:sim_rates} we show the empirical MSE, averaged over $10$ repetitions, as a function of the sample size, in logarithmic scale, for both our estimator and the k-Nearest-Neighbor (kNN) regression. We see that the MSE of the SVR estimator decays with a rate slightly better than the optimal value $-2/3$, independently from the dimension $d$ and the noise level $\sigma$: this is all consistent with Theorem \ref{thm:main}. As expected, kNN-regression has a convergence rate which severely deteriorates with the dimension (curse of dimensionality). 
We can also notice that the MSE drops far below the noise level, which confirms the de-noising feature of the SVR estimator.

\begin{figure}[h]
\hspace{-0.4cm}
\begin{minipage}{0.45\textwidth}
\begin{tikzpicture}
  \node (img)  {\includegraphics[width=.99\textwidth]{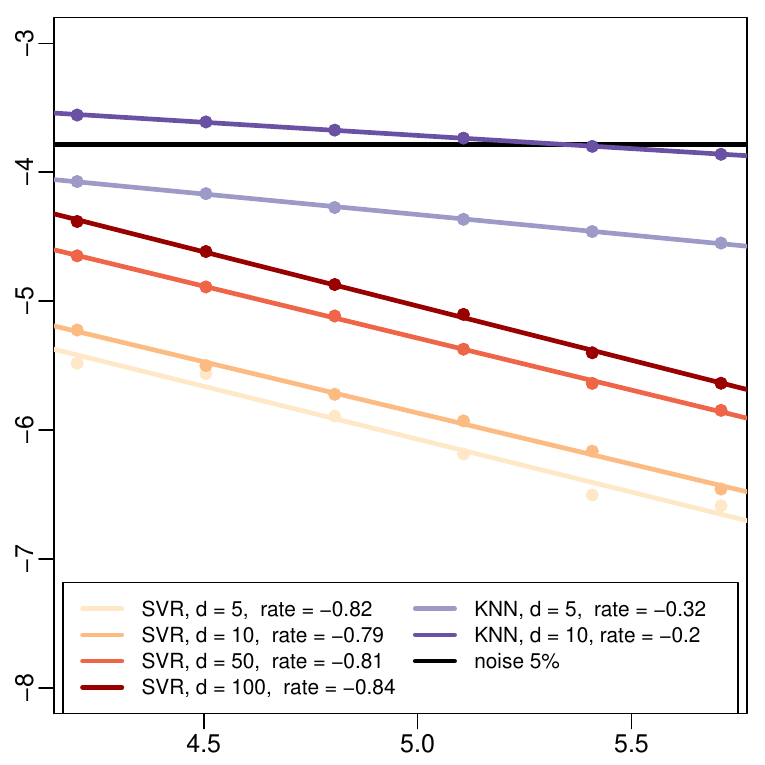}};
  \node[below=of img, node distance=0cm, yshift=1.2cm] {$\log_{10}(n)$};
  \node[left=of img, node distance=0cm, rotate=90, anchor=center,yshift=-0.9cm] {$\log_{10}$(MSE)};
 \end{tikzpicture}
\end{minipage}%
\hspace{0.3cm}
\begin{minipage}{0.45\textwidth}
\begin{tikzpicture}
  \node (img)  {\includegraphics[width=.99\textwidth]{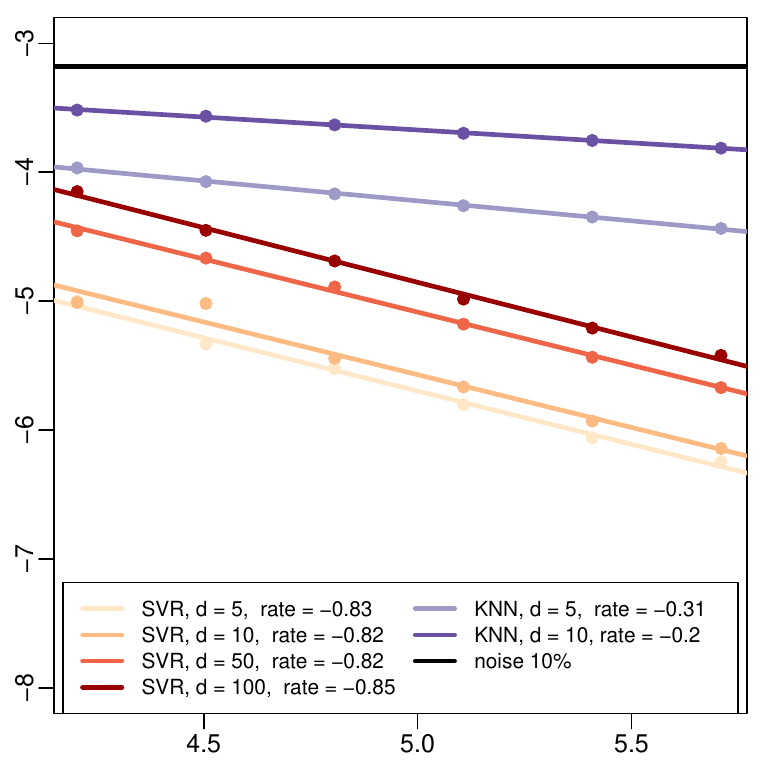}};
\node[below=of img, node distance=0cm, yshift=1.2cm] {$\log_{10}(n)$};
  \node[left=of img, node distance=0cm, rotate=90, anchor=center,yshift=-0.9cm] {$\log_{10}$(MSE)};
\end{tikzpicture}
\end{minipage}%
\caption{Comparison of convergence rates for the regression estimator with SVR and KNN-regression in different settings.}
\label{fig:sim_rates}
\end{figure}

To explore the behavior of the empirical MSE as a function of the scales $l$ and $j$ we conduct another experiment: we draw $X$ from a $10$-dimensional standard normal distribution, and obtain the response variable $Y=F_2(X)+\zeta$, with $\zeta$ Gaussian noise with standard deviation $\sigma=0.01|f_2(-4)-f_2(4)|$.
Figure \ref{fig:heatmaps} shows the behavior of the $\log_{10}(\text{MSE})$, obtained with SVR, for different values of $l$, $j$ and $n$. To obtain robust estimates in regions with high Monte Carlo variability, in regimes where our results do not hold, the errors are averaged over 50 repetition of each setting with a $10\%$ trimming. By observing each row, we notice that the MSE reaches its minimum for low values of $l$ and stays constant for larger $l$. By looking at the plot column-wise, we observe the bias variance trade-off, with coarse scales giving rough estimates, and fine scales resulting in overfitting. As expected, as the sample size grows, the optimal scale $j$ increases.

\begin{figure}[h]
\begin{tikzpicture}
 \node (img) {\includegraphics[width=0.96 \textwidth]{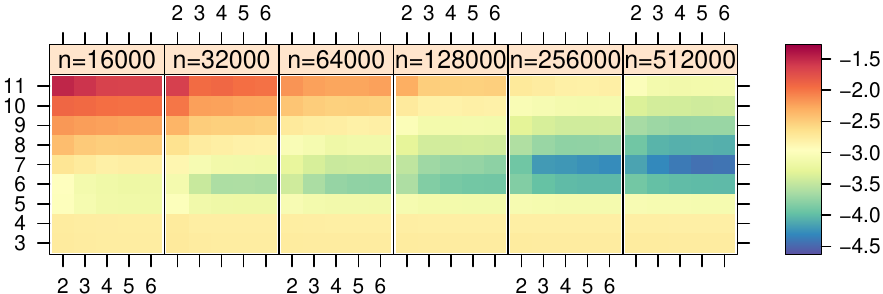}};
 \node[left=of img, node distance=0cm, rotate=90, anchor=center,yshift=-0.9cm] {$j$};
 \node[below=of img, node distance=0cm, yshift=1.2cm] {$\el$};
 \end{tikzpicture}
\caption{Empirical MSE versus sample size $n$ and scales $l$ and $j$.}
\label{fig:heatmaps}
\end{figure}

%\vskip0.5in
%\noindent{\bf{Acknowledgements}}.
\paragraph*{Acknowledgements.}This research was partially supported by AFOSR FA9550-17-1-0280, NSF-DMS-1821211, NSF-ATD-1737984. A.L.
acknowledges support from the de Castro Statistics Initiative, Collegio Carlo Alberto, Torino, Italy.
S.V. thanks Timo Klock for the discussion and the useful exchange of views about this and related problems.

\bibliographystyle{plain}
\bibliography{Ref}

\newpage
\appendix
\section{Proof of Proposition \ref{prop:hFhv-hFv}} \label{sec:proofsmain}

First, we set out some notation and exclude some low-probability events.
In the case of \ref{P1}, we condition on
$ \| X_i \| \le \ar $ for all $i$'s,
which happens with probability higher than $ 1 - 2n \exp(-\ar^2/2R^2) $
thanks to assumption \ref{X} and Lemma \ref{lem:sub-gauss-tail}.
We define $\rho$ to be the distribution of $X$,
$\rho(\cdot \mid E)$ the conditional distribution of $X$ given $ X \in E $,
and also conditioned on $ \| X_i \| \le \ar $ for all $i$'s in the case of \ref{P1},
and $ \rho_v(\cdot \mid E) $ the push-forward of $ \rho( \cdot \mid E) $
along $ x \mapsto \langle v , x \rangle $.
Let $ u \in \{ v , \hv \} $; when a property is stated for $u$, it is meant to hold for both $v$ and $\hv$.
We write
$$
\bIjkl jku = \{ x \in \R^d : \langle u,x\rangle \in I_{j,k} \} .
$$
We will restrict to the sets $ \bIjkl jku $ with 
\begin{equation} \label{eq:heavysets}
\rho(\bIjkl jku) \gtrsim \eps^2 / \#\K j |f|_{\cc^0}^2 . \tag{K}
\end{equation}
Indeed,
\begin{align*}
& \E_X [ | \hfjv ( \langle \hv , X \rangle )  - \hfjhv ( \langle \hv , X \rangle ) |^2 \II\{ \|X\| \le \ar \} ] \\
= \ & \sum_{k\in\K j} \E_X [ | \hfjv ( \langle \hv , X \rangle )  - \hfjhv ( \langle \hv , X \rangle ) |^2 \II \{ X \in \bIjkl jk\hv \cap B(0,\ar) \} ] ,
\end{align*}
where
\new{, thanks to the fact that the estimators are truncated at $ M = \| F \|_\infty = \| f \|_{\cc^0} $,}
the subsum of the terms in $k$ with $ \rho(\bIjkl jk\hv) \cap B(0,\ar) \lesssim \eps^2 / \#\K j |f|_{\cc^0}^2 $ is bounded by $ \eps^2 $;
hence, we can restrict to $ \rho(\bIjkl jk\hv) \ge \rho(\bIjkl jk\hv \cap B(0,\ar) ) \gtrsim \eps^2 / \#\K j |f|_{\cc^0}^2 $.
If we assume \ref{P1}, then
$
\rho(\bIjkl jkv) = \rho(\bIjkl jk\hv) \gtrsim \eps^2 / \#\K j |f|_{\cc^0}^2
$ as well.
Otherwise, if we assume \ref{P2}, we still have $ \rho ( \bIjkl jkv ) \gtrsim | I_{j,k} | > 1 / \#\K j \gtrsim \eps^2 / \#\K j |f|_{\cc^0}^2 $.

\new{Defining $ n_{j,k|u} = \# \{ X_i \in \bIjkl jku \} $,} we further condition on the event
\begin{equation} \label{eq:enoughsamples}
n_{j,k|u} \gtrsim n\rho(\bIjkl jku) \text{ for all $k$'s} , \tag{$\mathcal{E}_1$}
\end{equation}
which has probability at least $ 1 - C \#\K j \exp(-c \ n \eps^2 / \#\K j |f|_{\cc^0}^2 ) $, thanks to Lemma \ref{lem:hrho-rho} and \eqref{eq:heavysets}.
Also recall that we are conditioning on
\begin{equation} \label{eq:th<2-j}
\| \hv - v \| \le t 2^{-j} . \tag{$\mathcal{E}_2$}
\end{equation}
For two probability measures $\mu$ and $\nu$, we define the Kantorovich distance
$$
 K_\alpha(\mu,\nu) = \sup_{g \in \cc^\alpha, |g|_{\cc^\alpha} \le 1} \int g(x) d(\mu-\nu)(x) .
$$
The proof goes through a series of decompositions into statistics that are defined in Table \ref{tab:stats}
and whose concentration properties are stated in Lemma \ref{lem:stats}.

\begin{table}[H]
\caption{Statistics used in the decompositions of the proof of Proposition \ref{prop:hFhv-hFv}.}
\label{tab:stats}
\ra{2}
\centering
\resizebox{\columnwidth}{!}{%
\begin{tabular}{l l}
 \hline
 $ \widehat{y}_{j,k|u} = \frac{1}{n_{j,k|u}} \sum_i Y_i \II\{X_i \in \bIjkl jku\} $
 \\
 $ \widetilde{y}_{j,k|u} = \frac{1}{n_{j,k|u}} \sum_i F(X_i) \II\{X_i \in \bIjkl jku\} $
 &
 $ \overline{y}_{j,k|u} = \E [ F(X) \mid X \in \bIjkl jku ] $
 \\
 $ \widehat{\zeta}_{j,k|u} = \frac{1}{n_{j,k|u}} \sum_i \zeta_i \II \{ X_i \in \bIjkl jku \} $
 \\
 $ \widehat{x}_{j,k|u} = \frac{1}{n_{j,k|u}} \sum_{i} X_i \II \{ X_i \in \bIjkl jku \} $
 &
 $ \xjk jku = \E [ X \mid X \in \bIjkl jku ] $
 \\
 $ \widehat{\beta}_{j,k|u} = \frac{ \sum_i \langle u , X_i - \widehat{x}_{j,k|u} \rangle \left( Y_i - \widehat{y}_{j,k|u} \right) \II \{ X_i \in \bIjkl jku \} }{ \sum_i | \langle u , X_i - \widehat{x}_{j,k|u} \rangle |^2 \II \{ X_i \in \bIjkl jku \} } $
 \\
\multicolumn{2}{l}{ $ \widehat{q}_{j,k|u} = \frac{1}{n_{j,k|u}} \sum_i \langle u , X_i - \widehat{x}_{j,k|u} \rangle ( F(X_i) - \widetilde{y}_{j,k|u} ) \II \{ X_i \in \bIjkl jku \} $ }
 \\
  $ \widetilde{q}_{j,k|u} = \frac{1}{n_{j,k|u}} \sum_i \langle v , X_i - \overline{x}_{j,k|v} \rangle ( F(X_i) - \overline{y}_{j,k|v} ) \II \{ X_i \in \bIjkl jku \} $
 &
 $ q_{j,k|u} = \E [ \langle v , X - \overline{x}_{j,k|v} \rangle ( F(X) - \overline{y}_{j,k|v} ) \mid X \in \bIjkl jku ] $
 \\
 $ \widehat{s}_{j,k|u} = \frac{1}{n_{j,k|u}} \sum_i | \langle u , X_i - \widehat{x}_{j,k|u} \rangle |^2 \II \{ X_i \in \bIjkl jku \} $
 \\
 $ \widetilde{s}_{j,k|u} = \frac{1}{n_{j,k|u}} \sum_i | \langle v , X_i - \xjk jkv \rangle |^2 \II \{ X_i \in \bIjkl jku \} $
 &
 $ s_{j,k|u} = \Var[ \langle v , X \rangle\mid X \in \bIjkl jku ] $
 \\
 $ \widehat{z}_{j,k|u} = \frac{1}{n_{j,k|u}} \sum_i \langle u , X_i - \widehat{x}_{j,k|u} \rangle \zeta_i \II\{ X_i \in  \bIjkl jku \} $
 \\
 \hline
\end{tabular}%
}
\end{table}

\begin{lem} \label{lem:stats}
Under the assumptions of Proposition \ref{prop:hFhv-hFv}
and adopting the definitions in Table \ref{tab:stats},
for all $k$'s satisfying \eqref{eq:heavysets} and conditioned on \eqref{eq:enoughsamples} and \eqref{eq:th<2-j} we have
\begin{enumerate}[label=\textnormal{(\alph*)},leftmargin=*]
\setlength\itemsep{1em}
\item \label{it:hzeta}
 $ \P \left\{ | \widehat{\zeta}_{j,k|u} | > \tfrac{\eps}{\sqrt{\smash[b]{ \#\K j\rho(\bIjkl jku) }}} \right\} \lesssim \exp(-c \ n \eps^2/ \#\K j \sigma^2 )  $
\item \label{it:hy}
$ \P \left\{ | \widetilde{y}_{j,k|u} - \overline{y}_{j,k|u} | > t^{-1} \tfrac{\eps}{\sqrt{\#\K j\rho(\bIjkl jku)}} \right\} \lesssim \exp(- c \ n \eps^2/ \#\K j t^2 |f|_{\cc^0}^2 ) $
\item \label{it:hq}
 $ | \widehat{q}_{j,k|v} | \le |f|_{\cc^1} \ar^2 2^{-2j} \qquad | \widehat{q}_{j,k|\hv} | \lesssim t |f|_{\cc^1} \ar^2 2^{-2j} \qquad (\alpha = 1) $
 \item \label{it:hs}
 $ \P \{ \widehat{s}_{j,k|u} \lesssim t \ar^2 2^{-2j} \}
 \lesssim \exp(-c \ n \eps^2/ \#\K j t^2 |f|_{\cc^0}^2) $
 \item \label{it:hz}
 $
 \P \left\{ | \widehat{z}_{j,k|u} | \gtrsim t^{-1} \ar 2^{-j} \frac{ \eps }{\sqrt{\#\K j \rho(\bIjkl jku)} } \right\}
 \lesssim \exp(-c \ n \eps^2/ \#\K j t^2 \sigma^2)
 $
 \item \label{it:hx}
 $ \P \left\{ | \langle v , \widehat{x}_{j,k|u} - \xjk jku \rangle | > t^{-1} |f|_{\cc^1}^{-1} \tfrac{\eps}{\sqrt{\#\K j \rho(\Ijk jku)}} \right\} \lesssim \exp\left(-c \ n \eps^2/ \#\K j t^2 |f|_{\cc^1}^2 \ar^2 \right) $
 \item \label{it:tq-q}
 $ \P \{ | \widetilde{q}_{j,k|u} - q_{j,k|u} | > \ar 2^{-j} \tfrac{\eps}{\sqrt{\#\K j \rho(\bIjkl jku)}} \} \lesssim \exp\left( -c \ n \eps^2 / \#\K j t^2|f|_{\cc^1}^2 \ar^2 \right) $
\end{enumerate}
\end{lem}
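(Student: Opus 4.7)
The plan is to handle each of the seven bounds by reducing it to a standard Bernstein or sub-Gaussian Hoeffding inequality applied to the sum of i.i.d.\ samples falling in the cell $\bar I_{j,k|u}$. Throughout, we exploit three structural facts: (i) conditioned on \eqref{eq:enoughsamples} the effective sample size is $\gtrsim n\rho(\bar I_{j,k|u})$; (ii) on any cell we have $|\langle u,X_i-\hat x_{j,k|u}\rangle|\le|I_{j,k}|=2^{-j}$ when $u$ matches the slicing direction, which together with assumption \ref{X} and the conditioning $\|X_i\|\le r$ gives $|\langle v,X_i-\bar x_{j,k|\hat v}\rangle|\lesssim(1+tr)2^{-j}$ via \eqref{eq:th<2-j}; (iii) Hölder regularity of $f$ transfers the diameter of the cell into an oscillation bound for $F$.

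For \ref{it:hzeta}, the summand $\zeta_i\,\II\{X_i\in\bar I_{j,k|u}\}$ is conditionally sub-Gaussian with proxy $\sigma^2$; applying Hoeffding with the prescribed threshold $\eps/\sqrt{\#\K j \rho(\bar I_{j,k|u})}$ yields the exponent claimed after multiplying numerator and denominator by $n\rho(\bar I_{j,k|u})$. For \ref{it:hy}, the $F(X_i)$ are bounded by $|f|_{\cc^0}$ and Hoeffding gives the bound; the slack $t^{-1}$ in the threshold is balanced by the $t^2$ in the exponent since $t\ge1$. Item \ref{it:hq} is deterministic: on $\bar I_{j,k|v}$ both factors in the summand are controlled by $2^{-j}$ (modulated by the $C^1$ seminorm), while for $u=\hat v$ one uses (ii) to enlarge the width of $\langle v,X_i-\hat x_{j,k|\hat v}\rangle$ by a factor $\lesssim tr$, producing the $r^2$ and the extra $t$. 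Item \ref{it:hs} proceeds by concentrating $\hat s_{j,k|u}$ around its expectation (bounded via assumption \ref{R} when $\alpha=1$) using bounded-variable Bernstein, the summands being $\le 2^{-2j}$. Items \ref{it:hz} and \ref{it:hx} are Bernstein-type bounds on sums of products of sub-Gaussian variables ($\zeta_i$) with bounded ones ($\langle u,X_i-\hat x\rangle$, of magnitude $\lesssim r 2^{-j}$), respectively bounded variables $\langle v,X_i\rangle$ of the same order, the variance entering the exponents through the $t^2$ factor.

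The delicate part is \ref{it:tq-q}, where the empirical sum is computed over the random cell $\bar I_{j,k|\hat v}$ but the centering $\bar x_{j,k|v}$ and $\bar y_{j,k|v}$ refer to the deterministic cell $\bar I_{j,k|v}$. Here I would first condition on $\hat v$ and use the coupling between $\rho(\cdot\mid\bar I_{j,k|\hat v})$ and $\rho(\cdot\mid\bar I_{j,k|v})$: under \ref{P1} the two push-forwards under $\langle v,\cdot\rangle$ coincide up to a set of mass $\lesssim t 2^{-j}r$ (the symmetric-difference is a slab of width $\|\hat v-v\|\cdot r$), while under \ref{P2} the upper and lower density bounds give the same conclusion. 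This yields a deterministic Wasserstein (or $K_1$) discrepancy between the population versions $q_{j,k|\hat v}$ and $q_{j,k|v}$ of size $\lesssim r\cdot 2^{-j}$, after which Bernstein on bounded variables of magnitude $r 2^{-j}$ and variance $\lesssim r^2 2^{-2j}\rho(\bar I_{j,k|\hat v})$ supplies the stochastic part with the exponent $n\eps^2/(\#\K j t^2|f|_{\cc^1}^2 r^2)$.

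The main obstacle is exactly this interplay between $\hat v$-dependent cells and $v$-referenced quantities: naïvely the data in $\bar I_{j,k|\hat v}$ are not independent of $\hat v$, so I will argue via an auxiliary deterministic direction $v'$ in the $t 2^{-j}$-ball around $v$ (the worst case), together with a net argument on this ball; all other steps are routine once this reduction is in place.
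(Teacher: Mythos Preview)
Your plan for (a)--(c), (e), (f) matches the paper: sub-Gaussian concentration for the noise averages, Bernstein for the bounded summands, and a deterministic bound for (c) via \eqref{eq:th<2-j}. The two places where you diverge are (d) and (g), and the Wasserstein tool you bring in for (g) is exactly what is missing from your (d).

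For (d) you propose to concentrate $\widehat s_{j,k|u}$ around its expectation and then invoke \ref{R}. But \ref{R} is formulated for the direction $v$; when $u=\hv$ the relevant population variance is governed by the $\hv$-cell and need not satisfy \ref{R} directly (certainly not under \ref{P2}). The paper bridges this by decomposing $|\widehat s_{j,k|u}-s_{j,k|v}|$ into six pieces: four absorb the $u$-vs-$v$ and $\widehat x$-vs-$\overline x$ discrepancies using \eqref{eq:th<2-j}, one is the Bernstein fluctuation $|\widetilde s_{j,k|u}-s_{j,k|u}|$, and the last is the population comparison $|s_{j,k|u}-s_{j,k|v}|$, controlled via the Wasserstein Lemma \ref{lem:W1}. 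Only after this reduction does \ref{R} give the lower bound on $s_{j,k|v}$, hence on $\widehat s_{j,k|u}$.

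For (g) you have over-read the target. The statement compares $\widetilde q_{j,k|u}$ and $q_{j,k|u}$, both computed on the \emph{same} cell $E_{j,k|u}$ and with the \emph{same} $v$-centering; no comparison between $q_{j,k|\hv}$ and $q_{j,k|v}$ is called for here. That comparison is the separate term $Q_5$ in the proof of Proposition \ref{prop:hFhv-hFv}, handled there by Lemma \ref{lem:W1}. For the lemma itself the paper simply applies Bernstein after bounding the summands by $\lesssim t|f|_{\cc^1}r^2 2^{-2j}$ via \eqref{eq:th<2-j}. Finally, the paper treats $\hv$ as fixed throughout the lemma (Proposition \ref{prop:hFhv-hFv} is stated conditionally on $\hv$), so your net argument, while a legitimate concern in a fully rigorous account, is not part of the paper's proof.
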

\begin{proof}
\ref{it:hzeta}. Follows directly from \cite[Proposition 5.10]{Vershynin:NARMT},
exploiting \eqref{eq:heavysets} and \eqref{eq:enoughsamples}.

\ref{it:hy}. By the Bernstein inequality
(along with \eqref{eq:heavysets} and \eqref{eq:enoughsamples}),
we get
\begin{align*}
 & \P \{ | \widetilde{y}_{j,k|u} - \overline{y}_{j,k|u} | > \eps/\sqrt{\#\K j\rho(\bIjkl jku)} \} \lesssim \exp\biggl( -c \frac{ n \rho(\bIjkl jku) \frac{\eps^2}{\#\K j\rho(\bIjkl jku)} }{ |f|_{\cc^0}^2 + |f|_{\cc^0} \frac{\eps}{\sqrt{\#\K j \rho(\bIjkl jku)}} } \biggr) \\
 = \ & \exp\biggl( -c \frac{ n \eps^2 / \#\K j}{ |f|_{\cc^0}^2 + |f|_{\cc^0} \frac{\eps}{\sqrt{\#\K j \rho(\bIjkl jku)}} } \biggr)
 \le \exp(- c \ n \eps^2/ \#\K j |f|_{\cc^0}^2 ) .
 \end{align*}
 
 \ref{it:hq}. Follows by definition and condition \eqref{eq:th<2-j}.

\ref{it:hs}. We decompose
\begin{align*}
| \widehat{s}_{j,k|u} - s_{j,k|v} |
& \le \biggl| \frac{1}{n_{j,k|u}} \sum_i \langle u , \xjk jkv - \widehat{x}_{j,k|u} \rangle \langle u , X_i - \widehat{x}_{j,k|u} \rangle \II \{ X_i \in \bIjkl jku \} \biggr| \\
& + \biggl| \frac{1}{n_{j,k|u}} \sum_i \langle u , X_i - \xjk jkv \rangle \langle u , \xjk jkv - \widehat{x}_{j,k|u} \rangle \II \{ X_i \in \bIjkl jku \} \biggr| \\
& + \biggl| \frac{1}{n_{j,k|u}} \sum_i \langle u - v , X_i - \xjk jkv \rangle \langle u , X_i - \xjk jkv \rangle \II \{ X_i \in \bIjkl jku \} \biggr| \\
& + \biggl| \frac{1}{n_{j,k|u}} \sum_i \langle v , X_i - \xjk jkv \rangle \langle u - v , X_i - \xjk jkv \rangle \II \{ X_i \in \bIjkl jku \} \biggr| \\
& + | \widetilde{s}_{j,k|u} - s_{j,k|u} | \\
& + | s_{j,k|u} - s_{j,k|v} | .
\end{align*}
Using the Bernstein inequality (with \eqref{eq:heavysets} and \eqref{eq:enoughsamples}),
condition \eqref{eq:th<2-j},
and Lemma \ref{lem:W1} we obtain
$ | \widehat{s}_{j,k|u} - s_{j,k|v} | \lesssim t \ar^2 2^{-2j} $,
and hence
$ \widehat{s}_{j,k|u} \gtrsim t \ar^2 2^{-2j} $ by \ref{R},
with probability higher than $ 1 - C \exp(-c \ n \eps^2/ \#\K j t^2 |f|_{\cc^0}^2) $.

\ref{it:hz}. Follows from \cite[Theorem 3.1]{Buldygin-Pechuk}, \eqref{eq:heavysets} and \eqref{eq:enoughsamples}.

\ref{it:hx}. Follows by the Bernstein inequality (with \eqref{eq:heavysets} and \eqref{eq:enoughsamples}).

\ref{it:tq-q}. Follows by the Bernstein inequality (with \eqref{eq:heavysets}, \eqref{eq:enoughsamples}) and condition \eqref{eq:th<2-j}.
\end{proof}

We can now work to establish the main bound of Proposition \ref{prop:hFhv-hFv}.
Let $ x \in B(0,\ar) $, and let $k$ be the index such that $ \langle \hv , x \rangle \in I_{j,k} $.
Then
\begin{align*}
| \hfjhv( \langle \hv , x \rangle ) - \hfjv( \langle \hv , x \rangle ) | 
& \new{ =  | T_M [ \hfjkhv( \langle \hv , x \rangle ) ] - T_M [ \hfjkv( \langle \hv , x \rangle ) ] | } \\
& \new{ \le | \hfjkhv( \langle \hv , x \rangle ) - \hfjkv( \langle \hv , x \rangle ) | } ,
\end{align*}
where
\new{
$ \widehat{f}_{j,k|u} $
defines our local empirical estimator with respect to the oracle ($ u = v $) or the estimated ($ u = \hv $) direction.
In the piecewise constant case, we have
$$
 \widehat{f}_{j,k|u} (t) = \widehat{y}_{j,k|u} ,
$$
while in the piecewise linear case we have
$$
 \widehat{f}_{j,k|u} (t) = \widehat{y}_{j,k|u} + \widehat{\beta}_{j,k|u} ( t - \langle u , \widehat{x}_{j,k|u} \rangle ) .
$$
}
We first approach the piecewise constant case. We have
$$
| \widehat{y}_{j,k|\hv} - \widehat{y}_{j,k|v} | \le | \widehat{\zeta}_{j,k|\hv} |
+ | \widetilde{y}_{j,k|\hv} - \overline{y}_{j,k|\hv} |
+ | \overline{y}_{j,k|\hv} - \overline{y}_{j,k|v} |
+ | \overline{y}_{j,k|v} - \widetilde{y}_{j,k|v} |
+ | \widehat{\zeta}_{j,k|v} | .
$$
Now,
$$
 \P \{ | \widehat{\zeta}_{j,k|u} | > \eps/\sqrt{\smash[b]{ \#\K j\rho(\bIjkl jku) }} \} \lesssim \exp(-c \ n \eps^2/ \#\K j \sigma^2 )
$$
and
$$
\P \{ | \widetilde{y}_{j,k|u} - \overline{y}_{j,k|u} | > \eps/\sqrt{\#\K j\rho(\bIjkl jku)} \} \lesssim \exp(- c \ n \eps^2/ \#\K j |f|_{\cc^0}^2 )
$$
by Lemma \ref{lem:stats}\ref{it:hzeta} and \ref{it:hy}.
In view of \cite[Proposition 1.2.6]{kuksin2012} (bounding $K_\alpha$ in terms of $W_1$) and the Kantorovich--Rubinstein duality for $W_1$, we have
\begin{align*}
 | \overline{y}_{j,k|\hv} - \overline{y}_{j,k|v} | &\le |f|_{\cc^\alpha} K_\alpha ( \rho_v( \cdot \mid \bIjkl jk\hv) , \rho_v(\cdot \mid \bIjkl jkv)) \\
 &\le |f|_{\cc^\alpha}  \left( W_1(\rho_v(\cdot \mid \bIjkl jk\hv),\rho_v(\cdot \mid \bIjkl jkv)) \right)^{\frac{1}{2-\alpha}} \\
 &\lesssim |f|_{\cc^\alpha} \ar^{\frac{1}{2-\alpha}} \| \hv - v \|^{\frac{1}{2-\alpha}} ,
\end{align*}
where in the last inequality we have used Lemma \ref{lem:W1}.
\new{
This provides all the necessary bounds to conclude for the piecewise constant case,
with overall probability obtained by taking the union bound over $\kk_j$.
}

We now take care of the piecewise linear case, for which we can assume $ \alpha = 1 $.
Let us separate the constant and the linear components:
$$
| \hfjkhv( \langle \hv,x\rangle ) - \hfjkv( \langle \hv , x \rangle ) |
\le | \hyjkhv - \hyjkv | +
 |\hmjkhv  \langle \hv , x - \hxjkhv \rangle - \hmjkv ( \langle \hv , x \rangle - \langle v , \hxjkv \rangle ) | .
$$
\new{
For the constant component, we already have the bounds obtained in the piecewise constant case.
We thus focus on the linear component, which we split into
$$
|\hmjkhv  \langle \hv , x - \hxjkhv \rangle - \hmjkv ( \langle \hv , x \rangle - \langle v , \hxjkv \rangle ) | \le \ell_1 + \ell_2 + \ell_3 ,
$$
}
where
\begin{align*}
\ell_1 & = |\hmjkv| |\langle v - \hv , x \rangle| \\
\ell_2 & = |\hmjkhv| | \langle \hv , x - \hxjkhv \rangle - \langle v , x - \hxjkv \rangle | \\
\ell_3 & =  | \hmjkhv - \hmjkv | | \langle v , x - \hxjkv \rangle | .
\end{align*}
We have
\begin{align*}
 & | \widehat{\beta}_{j,k|u} | \le \widehat{s}_{j,k|u}^{-1} \left( | \widehat{q}_{j,k|u} | + | \widehat{z}_{j,k|u} | \right)  \\
 & |\hmjkhv - \hmjkv| \le \hvarjkhv^{-1} | \hcovjkhv - \hcovjkv |
 + \hvarjkhv^{-1} \hvarjkv^{-1} |\hvarjkhv - \hvarjkv| |\hcovjkv| \\
  & \phantom{ |\hmjkhv - \hmjkv| } \hspace{0.85pt} + \hvarjkhv^{-1} | \widehat{z}_{j,k|\hv} |
  + \hvarjkv^{-1} | \widehat{z}_{j,k|v} | . 
\end{align*}
Using Lemma \ref{lem:stats}\ref{it:hs}, \ref{it:hq} and \ref{it:hz}, we get
\begin{align*}
 & | \widehat{\beta}_{j,k|v} | \le |f|_{\cc^1} + t^{-1} \ar^{-1} 2^{j} \frac{\eps}{\sqrt{\#\K j \rho(\bIjkl jk\hv)}} \\
 & | \widehat{\beta}_{j,k|\hv} | \le t |f|_{\cc^1} + t^{-1} \ar^{-1} 2^{j} \frac{\eps}{\sqrt{\#\K j \rho(\bIjkl jk\hv)}} \\
 & |\hmjkhv - \hmjkv| \lesssim t^{-1} \ar^{-2} 2^{2j} | \hcovjkhv - \hcovjkv |
 + t^{-1} \ar^{-2} 2^{2j} |f|_{\cc^1} |\hvarjkhv - \hvarjkv| \\
 & \phantom{ |\hmjkhv - \hmjkv| } \hspace{0.85pt} + t^{-1} \ar^{-1} 2^{j} \frac{\eps}{\sqrt{\#\K j \rho(\bIjkl jk\hv)}}
\end{align*}
 with probability higher than $ 1 - C \exp(-c \ n \eps^2/ \#\K j t^2 |f|_{\cc^0}^2) $.
 Hence
\begin{align*}
 \ell_1 & \le |f|_{\cc^1} \ar \| \hv - v \| + \frac{\eps}{\sqrt{\#\K j \rho(\bIjkl jk\hv)}} \\
 \ell_2 & \le t |f|_{\cc^1} \left( | \langle \hv - v , x \rangle | + | \langle v , \hxjkv - \hxjkhv \rangle | + | \langle v - \hv , \hxjkhv \rangle | \right) \\
 & + t^{-1} \ar^{-1} 2^{j} \frac{\eps}{\sqrt{\#\K j \rho(\bIjkl jk\hv)}} \left( | \langle \hv , x - \hxjkhv \rangle | + | \langle v , x - \hxjkv \rangle | \right) \\
 & \lesssim t |f|_{\cc^1} \left( \ar \| \hv -v \| + | \langle v , \hxjkv - \hxjkhv \rangle | \right) \\
 & + \frac{\eps}{\sqrt{\#\K j \rho(\bIjkl jk\hv)}} \\
 \ell_3 & \le \ar^{-1} 2^{j} | \hcovjkhv - \hcovjkv |
 + |f|_{\cc^1} \ar^{-1} 2^{j} |\hvarjkhv - \hvarjkv|
 + \frac{\eps}{\sqrt{\#\K j \rho(\bIjkl jk\hv)}}
 \end{align*}
 with probability higher than $ 1 - C \exp(-c \ n \eps^2/ \#\K j t^2 |f|_{\cc^0}^2) $.
Now,
\begin{align*}
 | \langle v , \hxjkhv - \hxjkv \rangle |
 \le | \langle v , \hxjkhv - \xjk jk\hv \rangle | + | \langle v , \xjk jk\hv - \xjk jkv \rangle | + | \langle v , \xjk jkv - \hxjkv \rangle | ,
\end{align*}
where, by Lemma \ref{lem:stats}\ref{it:hx},
$$
 | \langle v , \widehat{x}_{j,k|u} - \xjk jku \rangle | \le t^{-1} |f|_{\cc^1}^{-1} \frac{\eps}{\sqrt{\#\K j \rho(\Ijk jku)}}
$$
 with probability higher than  $ 1 - C \exp\left(-c \ n \eps^2/ \#\K j t^2 |f|_{\cc^1}^2 \right) $,
and, thanks to Lemma \ref{lem:W1},
$$
| \langle v , \xjk jk\hv - \xjk jkv \rangle | \le W_1(\rho_v(\cdot \mid \bIjkl jk\hv),\rho_v(\cdot \mid \bIjkl jkv)) \lesssim \ar \| \hv - v \| .
$$

We are now left to estimate $ | \hcovjkhv - \hcovjkv | $ and $ | \hvarjkhv - \hvarjkv | $.
First, we break down \linebreak $ | \hcovjkhv - \hcovjkv | \le \sum_{a=1}^8 Q_a $, where
\begin{align*}
& Q_1 = \biggl| \frac{1}{n_{j,k|\hv}} \sum_i \langle\hv , \xjkv - \hxjkhv \rangle (F(X_i) - \widetilde{y}_{j,k|\hv}) \II\{X_i \in \bIjkl jk\hv\} \biggr| \\
& Q_2 = \biggl| \frac{1}{n_{j,k|\hv}} \sum_i \langle\hv - v,X_i - \xjkv\rangle (F(X_i) - \widetilde{y}_{j,k,\hv}) \II\{X_i \in \bIjkl jk\hv\} \biggr| \\
& Q_3 = \biggl| \frac{1}{n_{j,k|\hv}} \sum_i \langle v,X_i - \xjkv\rangle (\yjkv - \widetilde{y}_{j,k|\hv}) \II\{X_i \in \bIjkl jk\hv\} \biggr| \\
& Q_4 = | \widetilde{q}_{j,k|\hv} - q_{j,k|\hv} | \\
& Q_5 = | q_{j,k|\hv} - q_{j,k|v} | \\
& Q_6 = | q_{j,k|v} - \widetilde{q}_{j,k|v} | \\
& Q_7 = \biggl| \frac{1}{n_{j,k|v}} \sum_i \langle v,\hxjkv - \xjkv\rangle (F(X_i) - \yjkv) \II \{ X_i \in \bIjkl jkv \} \biggr| \\
& Q_8 = \biggl| \frac{1}{n_{j,k|v}} \sum_i \langle v , X_i - \hxjkv \rangle (\widetilde{y}_{jk|v} - \yjkv) \II \{ X_i \in \bIjkl jkv \} \biggr| .
\end{align*}
We bound the terms $T_i$'s as follows.

\noindent$ \pmb Q_1 $.
$$ Q_1 \le | \langle \hv , \xjkv - \hxjkhv \rangle | \frac{1}{n_{j,k|\hv}} \sum_i | F(X_i) - \widetilde{y}_{j,k|\hv} | \II \{ X_i \in \bIjkl jk\hv \} $$
with
$ | F(X_i) - \widetilde{y}_{jk|\hv} | \lesssim t |f|_{\cc^1} \ar 2^{-j} $.
Hence
\begin{align*}
\ar^{-1} 2^j Q_1
& \le t |f|_{\cc^1} | \langle \hv , \hxjkhv - \xjkv \rangle | \\
& \le t |f|_{\cc^1} ( | \langle v , \hxjkhv - \xjkhv \rangle | + | \langle v , \xjkhv - \xjkv \rangle | + \ar \| \hv - v \| ) ,
\end{align*}
where
$$ \P \{ | \langle v , \hxjkhv - \xjkhv \rangle | > t^{-1} |f|_{\cc^1}^{-1} \tfrac{\eps}{\sqrt{\#\K j \rho(\bIjkl jk\hv)}} \} \lesssim \exp\left(-c \ n \eps^2 / \#\K j t^2|f|_{\cc^1}^2 \ar^2 \right) $$
by Lemma \ref{lem:stats}\ref{it:hx},
and
$$ | \langle v , \xjkhv - \xjkv \rangle | \le W_1(\rho_v(\cdot \mid \bIjkl jk\hv),\rho_v(\cdot \mid \bIjkl jkv)) \lesssim \ar \| \hv - v \| $$
by Lemma \ref{lem:W1}.

\noindent$ \pmb Q_2 $.
$ Q_2 \lesssim  2^{-j} t |f|_{\cc^1} \ar^2 \| \hv -v \| $,
hence
$ \ar^{-1} 2^j Q_2 \le t |f|_{\cc^1} \ar \| \hv - v \| $.

\noindent$ \pmb Q_3 $.
$$ Q_3 \le \frac{1}{n_{j,k|\hv}} \sum_i | \langle v,X_i - \xjkv\rangle| |\yjkv - \widetilde{y}_{jk|\hv}| \II \{ X_i \in \bIjkl jk\hv \} $$
with
$ |\langle v,X_i - \xjkv\rangle)| \lesssim t \ar 2^{-j} $.
Hence
$$ \ar^{-1} 2^j Q_3 \le t | \widetilde{y}_{jk|\hv} - \yjkv | \le t (|\widetilde{y}_{jk|\hv} - \yjkhv | + | \yjkhv - \yjkv |) , $$
where
$$ \P \{ |\widetilde{y}_{jk|\hv} - \yjkhv | > t^{-1} \tfrac{\eps}{\sqrt{\#\K j \rho(\Ijkhv)}} \} \le C\exp\left(-c \ n \eps^2 / \#\K j t^2 |f|_{\cc^0}^2 \right) $$
by Lemma \ref{lem:stats}\ref{it:hy},
and
$$ | \yjkhv - \yjkv | \le |f|_{\cc^1} W_1(\rho(\cdot \mid \Ijkhv),\rho(\cdot \mid \Ijkv)) \lesssim |f|_{\cc^1} \ar \| \hv - v \| $$
by Lemma \ref{lem:W1}.

\noindent$ \pmb Q_4 $ and $ \pmb Q_6$.
We apply Lemma \ref{lem:stats}\ref{it:tq-q}.

\noindent$ \pmb Q_5 $.
$$ Q_5 = \left| \int g(\new{z}) (d\rho_v(\new{z} \mid \bIjkl jk\hv) - d\rho_v(\new{z} \mid \bIjkl jkv)) \right| $$
where $ g(\new{z}) = ( \new{z} - \langle v , \xjkv \rangle ) (f(\new{z}) - \yjkv) $
is Lipschitz of constant $\lesssim t |f|_{\cc^1} 2^{-j} \ar$:
\begin{align*}
 | g(\new{z}) - g(s) | \le | \new{z} - s | | f(\new{z}) - \yjkv | + | s - \langle v , \xjkv \rangle | | f(\new{z}) - f(s) |
 \lesssim t |f|_{\cc^1} \ar 2^{-j}  | \new{z} - s | .
\end{align*}
Thus, by Lemma \ref{lem:W1},
$$ \ar^{-1} 2^j Q_5 \lesssim t |f|_{\cc^1} W_1(\rho_v(\cdot \mid \bIjkl jk\hv) - \rho_v(\cdot \mid \bIjkl jkv)) \lesssim t |f|_{\cc^1} \ar \| \hv - v \| . $$

\noindent$ \pmb Q_7 $.
We apply Lemma \ref{lem:stats}\ref{it:hx} on
$ Q_7 \le | \langle v , \hxjkv - \xjkv \rangle | |f|_{\cc^1} \ar 2^{-j} $.

\noindent$ \pmb Q_8 $.
We apply Lemma \ref{lem:stats}\ref{it:hy} on
$ Q_8 \le \ar 2^{-j} | \hyjkv - \yjkv | $.

\new{The quantity $ |\hvarjkhv - \hvarjkv| $ can be estimated analogously through the decomposition. $ |\hvarjkhv - \hvarjkv| \le \sum_{a=1}^9 S_a $, with
\begin{align*}
& S_1 = \biggl| \frac{1}{n_{j,k|\hv}} \sum_i \langle \hv , \xjkv - \hxjkhv \rangle \langle \hv , X_i - \hxjkhv \rangle \II\{X_i \in \bIjkl jk\hv\} \biggr| \\
& S_2 = \biggl| \frac{1}{n_{j,k|\hv}} \sum_i \langle \hv , X_i - \xjkv \rangle \langle \hv , \xjkv - \hxjkhv \rangle \II\{X_i \in \bIjkl jk\hv\} \biggr| \\
& S_3 = \biggl| \frac{1}{n_{j,k|\hv}} \sum_i \langle \hv - v , X_i - \xjkv \rangle \langle \hv , X_i - \xjkv \rangle \II\{X_i \in \bIjkl jk\hv\} \biggr| \\
& S_4 = \biggl| \frac{1}{n_{j,k|\hv}} \sum_i \langle v , X_i - \xjkv \rangle \langle \hv - v , X_i - \xjkv  \rangle \II\{X_i \in \bIjkl jk\hv\} \biggr| \\
& S_5 = | \widetilde{s}_{j,k|\hv} - s_{j,k|\hv} | \\
& S_6 = | s_{j,k|\hv} - s_{j,k|v} | \\
& S_7 = | s_{j,k|v} - \widetilde{s}_{j,k|v} | \\
& S_8 = \biggl| \frac{1}{n_{j,k|v}} \sum_i \langle v , \hxjkv - \xjkv \rangle \langle v , X_i - \xjkv \rangle \II\{X_i \in \bIjkl jkv\} \biggr| \\
& S_9 = \biggl| \frac{1}{n_{j,k|v}} \sum_i \langle v , X_i - \hxjkv \rangle \langle v , \hxjkv - \xjkv \rangle \II\{X_i \in \bIjkl jkv\} \biggr| .
\end{align*}}

\new{We can finally put all the bounds and probabilities together.
Each addend of the decomposition is bounded either by $ t |f|_{\cc^1} \ar \| \hv - v \| $
or by $\eps$,
with probability lower bounded by
$$
1 - C\exp\left(-c \ n \eps^2 / \#\K j t^2 \|f\|_{\cc^1}^2 \ar^2 \right) .
$$
Taking the union bound over $\K j$ completes the proof.}
\qed

\section{Proofs of technical results} \label{sec:proofs}

In our proofs, we make use of the following Lemma to ensure that we have enough local samples,
or to concentrate the empirical measure on the underlying distribution.
\begin{lem} \label{lem:hrho-rho}
Let $X$ be a random variable, and let $ X_1,\dots,X_n $ be independent copies of $X$.
Given a measurable set $E$, define $ \rho(E) = \P\{X\in E\} $ and $ \hrho(E) = n^{-1} \sum_i \mathbbm{1}\{X_i \in E\} $.
Then
$$
\P \{ | \hrho(E) - \rho(E) | > t \} \le 2 \exp \left( - \frac{n t^2/2}{\rho(E) + t/3} \right) .
$$
In particular, for $ t = \rho(E)/2 $ we have
 \begin{align*}
 \P \left\{ \hrho(E) \notin \left[ \frac{1}{2}\rho(E) , \frac{3}{2}\rho(E) \right] \right\} \le \P \left\{ | \hrho(E) - \rho(E) | > \frac{1}{2}\rho(E) \right\} \le 2 \exp\left(-\tfrac{3}{28} n \rho(E)\right) .
 \end{align*}
\end{lem}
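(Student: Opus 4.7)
The plan is to recognize this as a direct application of Bernstein's inequality for bounded i.i.d. random variables. First, I would define the centered indicator random variables $Z_i = \mathbbm{1}\{X_i \in E\} - \rho(E)$, so that $\hrho(E) - \rho(E) = n^{-1} \sum_i Z_i$. These variables are i.i.d., centered, and bounded by $|Z_i| \le 1$, with variance $\var(Z_i) = \rho(E)(1-\rho(E)) \le \rho(E)$.

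Next, I would invoke the classical Bernstein inequality in the form
\begin{equation*}
\P\left\{ \left| \sum_i Z_i \right| > s \right\} \le 2 \exp\left( - \frac{s^2/2}{\sum_i \var(Z_i) + s/3} \right),
\end{equation*}
and substitute $s = nt$, using $\sum_i \var(Z_i) \le n\rho(E)$ and the uniform bound $|Z_i| \le 1$. Dividing numerator and denominator by $n$ yields exactly the first claimed bound. This is the key (and essentially only nontrivial) step, though it is entirely routine.

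For the second statement, I would simply substitute $t = \rho(E)/2$ into the first bound and compute: the numerator becomes $n\rho(E)^2/8$, while the denominator equals $\rho(E) + \rho(E)/6 = 7\rho(E)/6$, giving an exponent of $-\tfrac{6}{8\cdot 7} n \rho(E) = -\tfrac{3}{28} n\rho(E)$. The only subtlety is noting that the event $\{\hrho(E) \notin [\tfrac{1}{2}\rho(E), \tfrac{3}{2}\rho(E)]\}$ is contained in $\{|\hrho(E) - \rho(E)| > \tfrac{1}{2}\rho(E)\}$, which is immediate from the definition of the two-sided deviation.

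There is no real obstacle here; the lemma is a textbook concentration bound, and the proof is a one-line invocation of Bernstein followed by arithmetic. The only thing to be careful about is using the variance bound $\rho(E)(1-\rho(E)) \le \rho(E)$ rather than the weaker $\le 1/4$, since retaining the factor of $\rho(E)$ is what makes the denominator $\rho(E) + t/3$ appear in the stated bound (and what makes the second inequality scale with $n\rho(E)$ rather than merely $n$).
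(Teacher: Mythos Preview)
Your proposal is correct and matches the paper's approach exactly: the paper's proof is the single sentence ``The bound follows by a direct application of the Bernstein inequality to the random variables $\mathbbm{1}\{X_i \in E\}$,'' and you have simply spelled out the details of that application, including the variance bound $\rho(E)(1-\rho(E))\le\rho(E)$ and the arithmetic for the particular case $t=\rho(E)/2$.
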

\begin{proof}
 The bound follows by a direct application of the Bernstein inequality
 to the random variables $ \mathbbm{1}\{X_i \in E \} $.
\end{proof}

When working with possibly unbounded distributions, we need some control on their tails. A common choice is to assume sub-Gaussian decay.
We recall that a random variable $X$ is sub-Gaussian of variance proxy $R^2$ if
$$ \P \{ |X| > t \} \le 2 \exp \left( - \frac{t^2}{2 R^2} \right) . $$
A random vector $ X \in \R^d $ is sub-Gaussian if $ \langle u,X\rangle $ is sub-Gaussian for every $ u \in \S^{d-1} $.
In particular, bounded and normal distributions are sub-Gaussian.

\begin{lem} \label{lem:sub-gauss-tail}
 Let $ X \in \R^d $ be a sub-Gaussian vector with variance proxy $ R^2 $.
 Then
 $$ \P \{ \|X\| > t \} \le 2 \exp\left( - \frac{t^2}{2dR^2} \right) . $$
\end{lem}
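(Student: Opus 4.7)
The plan is to reduce the bound on $\|X\|$ to the sub-Gaussian tail of the scalar projections $\langle u, X\rangle$ for $u \in \S^{d-1}$, which is exactly what assumption gives us. The key identity to exploit is that, in any orthonormal basis $(e_i)_{i=1}^d$,
\[
\|X\|^2 = \sum_{i=1}^d \langle e_i, X\rangle^2,
\]
and each $X_i := \langle e_i, X\rangle$ is sub-Gaussian with variance proxy $R^2$ because $e_i \in \S^{d-1}$.

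From this decomposition I would proceed via a Chernoff-type argument rather than a naive coordinate union bound (which would introduce a spurious factor $d$ in the prefactor). Specifically, applying Jensen's inequality to the convex function $z \mapsto e^{\lambda z}$ with uniform weights over $i=1,\dots,d$,
\[
\exp\!\left(\lambda \frac{\|X\|^2}{d}\right) \;=\; \exp\!\left(\lambda\,\frac{1}{d}\sum_{i=1}^d X_i^2\right) \;\le\; \frac{1}{d}\sum_{i=1}^d e^{\lambda X_i^2},
\]
so that, taking expectations, $\E[\exp(\lambda \|X\|^2/d)] \le \frac{1}{d}\sum_i \E[\exp(\lambda X_i^2)]$. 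I would then bound $\E[\exp(\lambda X_i^2)]$ from the sub-Gaussian tail by integrating $\int_0^\infty \lambda e^{\lambda u} \P\{X_i^2>u\}\,du$, which for $\lambda < 1/(2R^2)$ yields a finite constant independent of $d$. Markov's inequality then gives
\[
\P\{\|X\|>t\} \;=\; \P\!\left\{e^{\lambda\|X\|^2/d} > e^{\lambda t^2/d}\right\} \;\le\; 2\,\exp\!\left(-\tfrac{t^2}{2dR^2}\right),
\]
after choosing $\lambda$ so that the $d$-independent prefactor equals $2$.

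The main obstacle is pinning down the exponent constant: a careless choice of $\lambda$ gives the correct form of decay $\exp(-c\,t^2/(dR^2))$ but with a worse constant $c$, while the stated bound requires calibrating $\lambda$ near $1/(2R^2)$ where $\E[\exp(\lambda X_i^2)]$ blows up. The cleanest route is to work directly with the stated sub-Gaussian tail definition $\P\{|X_i|>s\}\le 2\exp(-s^2/(2R^2))$ inside a Chernoff bound and absorb the residual prefactor into the $2$ (or, alternatively, exploit that $\|X\|^2/d \le \max_i X_i^2$ together with the MGF of $\max_i X_i^2$, for which the logarithm of the union bound over $d$ coordinates can be traded against a slight loss in the variance proxy). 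Both routes are standard; the only genuine technical step is the choice of $\lambda$ to recover precisely the stated constants.
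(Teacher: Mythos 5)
Your approach is essentially the paper's: both reduce $\P\{\|X\|>t\}$ to a Chernoff bound on the moment generating function of $\|X\|^2$, controlled in turn by the scalar moment generating functions $\E[\exp(X_k^2/(2R^2))]$ of the coordinates. The single technical difference is the convexity inequality used in the reduction: you apply Jensen's inequality to compare $\exp(\lambda\|X\|^2/d)$ with the \emph{average} $\tfrac1d\sum_k e^{\lambda X_k^2}$, while the paper applies H\"older's inequality to the \emph{product}, $\E\bigl[\prod_k \exp(X_k^2/(2dR^2))\bigr]\le \prod_k \bigl(\E\exp(X_k^2/(2R^2))\bigr)^{1/d}$. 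Both are one-line tricks and both produce the same bound ($\le 2$) once the scalar MGF is controlled, so neither buys anything over the other here. Your concern in the final paragraph is well founded and is precisely the gap the paper sidesteps by citation: under the paper's definition of sub-Gaussian (the tail bound $\P\{|X_k|>s\}\le 2\exp(-s^2/(2R^2))$), the scalar MGF $\E[\exp(\lambda X_k^2)]$ at $\lambda=1/(2R^2)$ need not be $\le 2$, nor even finite; one only gets $\E[\exp(X_k^2/(cR^2))]\le 2$ for a somewhat larger absolute constant $c$. The paper absorbs this discrepancy by invoking \cite[Proposition 2.5.2]{vershynin_2018}, which gives equivalence of the sub-Gaussian characterizations only up to absolute constants in the variance proxy --- so the stated lemma holds with $2dR^2$ replaced by $cdR^2$, and the choice $c=2$ is a convention rather than a tight consequence of the tail definition. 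Your plan would need to make the same concession on the constant; beyond that it is correct.
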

\begin{proof}
Let $ X_k $ be the $k$-th coordinate of $X$. Then
\begin{align*}
 \E \left[ \exp\left( \frac{\| X \|^2}{2 d R^2} \right) \right] = \E \left[ \prod_{k=1}^d \exp\left( \frac{|X_k|^2}{2 d R^2} \right) \right]
\le \left( \prod_{k=1}^d \E \left[ \exp\left( \frac{|X_k|^2}{2 R^2} \right) \right] \right)^{1/d}
\hspace{-10pt} \le 2\, . 
\end{align*}
The result follows from \cite[Proposition 2.5.2]{vershynin_2018}.
\end{proof}

%The lemma below shows that most samples from a $d$-dimensional sub-Gaussian distribution of variance proxy $R^2$ fall into a ball of radius $ \sqrt{d} R $.
%\begin{lem} \label{lem:sub-gauss-ball}
%Let $ X_1,\dots,X_n $ be independent copies of a sub-Gaussian vector $ X \in \R^d$ with variance proxy $R^2$.
%Then, for every $ \alpha \ge 2 $ and $ \beta \in (0,1) $,
%$$
%\P \left\{ \# \{ X_i \in B(0,\sqrt{2d\log(2\alpha)} R)) \} < \left(1-\tfrac{1}{\alpha}\right)\beta n \right\} \le 2 e^{- \left(1-\tfrac{1}{\alpha}\right) \tfrac{(1-\beta)^2/2}{1+(1-\beta)/3} n}.
%$$
%\end{lem}
%\begin{proof}
%Let $ B = B(0,\sqrt{2d\log(2\alpha)} R)) $ and $ \rho(B) = \P\{ X \in B \} $.
%Lemma \ref{lem:sub-gauss-tail} gives
%\begin{align*}
%  \rho( B ) \ge 1 - 2 \exp(-\log (2\alpha) ) = \left(1-\tfrac{1}{\alpha}\right)\,.
% \end{align*}
%An application of Lemma \ref{lem:hrho-rho} with $ t = (1-\beta) \rho(B) $ yields
% \begin{align*}
%  & \P \left\{ \# \{ X_i \in B \} < \left(1-\tfrac{1}{\alpha}\right)\beta n \right\} \le \P \left\{ \# \{ X_i \in B \} < \beta \rho(B) n \right\} \\
%  \le \ & 2 \exp\left(- \tfrac{(1-\beta)^2/2}{1+(1-\beta)/3} \rho(B) n \right)  \le 2 \exp\left(- \left(1- \tfrac{1}{\alpha}\right) \tfrac{(1-\beta)^2/2}{1+(1-\beta)/3} n\right) . \qedhere
% \end{align*}
%\end{proof}

\new{
The lemma below shows that, with high probability,
most samples from a $d+1$-dimensional sub-Gaussian pair of variance proxy $R^2$ fall into a cylinder of radius proportional to $ \sqrt{d} R $ and height proportional to $R$.
\begin{lem} \label{lem:sub-gauss-ball}
Let $ (X_1,Y_1),\dots,(X_n,Y_n) $ be independent copies of a sub-Gaussian pair $ (X,Y) \in \R^{d+1}$ with variance proxy $R^2$.
Then, for every $ \gamma \in (0,1) $,
 $$
 \P \{ \# \{ (X_i,Y_i) \in B(0,C_X\sqrt{d}R) \times [-C_Y R , C_YR] \} < \delta_{X,Y} \gamma n \} \le 2 \exp\left( - \delta_{X,Y} \tfrac{(1-\gamma)^2/2}{1+(1-\gamma)/3} n \right) ,
 $$
 where $ \delta_{X,Y} =  \delta_X \delta_{Y|X} $ with
 $ \delta_X = 1-2 e^{-C_X^2/2} $ and
 $ \delta_{Y|X} = 1 - 2 e^{-C_Y^2 \delta_X / 2} $.
\end{lem}
\begin{proof}
 Let $ B = B(0,C_X \sqrt{d} R) $, $ \Ci = [ - C_Y R , C_Y R ] $
 and $ \rho(B\times\Ci) = \P \{ (X,Y) \in B \times \Ci \} $.
Then
$$
 \rho(B\times\Ci) = \P \{ X \in B \} \P \{ Y \in \Ci \mid X \in B \} .
$$
By Lemma \ref{lem:sub-gauss-tail}, we have
$
\P \{ X \in B \} \ge \delta_X
$
and
$
 \P \{ Y \in \Ci \mid X \in B \} \ge \delta_{Y|X}.
$
Hence,
$$
 \rho(B\times\Ci) \ge \delta_X \delta_{Y|X} = \delta_{X,Y} ,
$$
and therefore
\begin{align*}
 \P \{ n^b < \delta_{X,Y} \gamma n \} \le \P \{ n^b < \rho(B \times \Ci) \gamma n \} .
\end{align*}
An application of Lemma \ref{lem:hrho-rho} with $ t = (1-\gamma)\rho(B \times \Ci) $ gives now
\begin{equation*}
 \P \{ n^b < \rho(B \times \Ci) \gamma n \}
 \le 2 \exp\left( - \rho(B\times \Ci) \tfrac{(1-\gamma)^2/2}{1+(1-\gamma)/3} n \right)
 \le 2 \exp\left( - \delta_{X,Y} \tfrac{(1-\gamma)^2/2}{1+(1-\gamma)/3} n \right) . \qedhere
\end{equation*}
\end{proof}
}

We often carry out the following integration to obtain expectation bounds from bounds in probability.
\begin{lem} \label{lem:P-E}
 Let $ X $ be a random variable.
 Suppose there are $ p \in [1,2] $, $ a \ge e $ and $ b > 0 $ such that $ \P \{ |X| > \eps \} \le a e^{-b\eps^{2p}} $ for every $ \eps > 0 $.
 Then $ \E |X|^2 \le \bigl(\frac{\log a}{b}\bigr)^{1/p} $.
\end{lem}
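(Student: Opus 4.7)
The plan is to use the layer-cake representation of the second moment and split the resulting integral at the crossover where the two natural upper bounds on $\P\{|X|>\eps\}$ — namely the trivial bound $1$ and the stated bound $ae^{-b\eps^{2p}}$ — coincide. Writing
$$
\E|X|^2 \;=\; \int_0^\infty \P\{|X|^2>t\}\,dt \;=\; \int_0^\infty \P\{|X|>\sqrt{t}\}\,dt \;\le\; \int_0^\infty \min\!\bigl(1,\, a e^{-bt^p}\bigr)\,dt,
$$
I would set $t_0 := (\log a/b)^{1/p}$, which is the unique point where $a e^{-bt_0^p}=1$ (this is where $a\ge e$ ensures $t_0>0$).

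The portion of the integral on $[0,t_0]$ is bounded by $t_0 = (\log a/b)^{1/p}$, which is the target main term. For the tail on $[t_0,\infty)$, I would substitute $u = bt^p$, giving $dt = (pb^{1/p})^{-1} u^{1/p-1}\,du$ and lower limit $u=\log a \ge 1$. Since $p\in[1,2]$ we have $1/p-1\le 0$, so $u^{1/p-1}\le 1$ on the relevant range, and the tail becomes
$$
\int_{t_0}^\infty a e^{-bt^p}\,dt \;=\; \frac{a}{pb^{1/p}} \int_{\log a}^\infty u^{1/p-1} e^{-u}\,du \;\le\; \frac{a}{pb^{1/p}}\int_{\log a}^\infty e^{-u}\,du \;=\; \frac{1}{pb^{1/p}}.
$$
Combining the two pieces gives $\E|X|^2 \le (\log a/b)^{1/p} + (pb^{1/p})^{-1}$, and since $\log a \ge 1$ the second term is dominated by the first (up to an absolute constant that is absorbed in the $\lesssim$-convention used throughout the paper).

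There is no real obstacle here: the only care needed is in the change of variables and in using $p\ge 1$ together with $\log a \ge 1$ to guarantee the tail is of smaller (or comparable) order to the main term $(\log a/b)^{1/p}$.
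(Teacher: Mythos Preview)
Your argument is essentially the same as the paper's: split the layer-cake integral at the crossover point $(\log a/b)^{1/p}$ (in the $t=\eps^2$ variable), bound the near part trivially, and handle the tail via the substitution $u=bt^p$ together with $u^{1/p-1}\le 1$ for $u\ge \log a\ge 1$. The paper carries out the identical computation in the $\eps$ variable (writing $\int \eps\,\P\{|X|>\eps\}\,d\eps$ and splitting at $\eps_0=(\log a/b)^{1/2p}$), so your proof matches it up to the harmless change of variables and the constant you flag.
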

\begin{proof}
 Integrating over $ \eps > 0 $ we get
  $
  \E |X|^2 \le \int_0^{\eps_0} \eps \ d\eps + \int_{\eps_0}^\infty a e^{-b\eps^{2p}}\eps d\eps
  $
  with $ \eps_0 = (\log a/b)^{1/2p} $.
  The first integral is equal to $ \frac{1}{2} (\log a/b)^{1/p} $,
  while the substitution $ b \eps^{2p} \to \eps $ in the second integral gives
 \[
   \frac{a}{2p} \int_{\log a}^\infty \eps^{1/p - 1} e^{-\eps} d\eps \left(\frac{1}{b}\right)^{1/p}
   \le \frac{a}{2} \int_{\log a}^\infty e^{-\eps} d\eps \left(\frac{1}{b}\right)^{1/p}
   = \frac{1}{2} \left(\frac{1}{b}\right)^{1/p} . \qedhere
  \]
\end{proof}

In the proof of Proposition \ref{prop:hFhv-hFv}
we use the following bound on the 1$^\text{st}$ Wasserstein distance $W_1$
between two conditional distributions.
\begin{lem} \label{lem:W1}
 Let $\rho$ be a probability distribution in $\R^d$,
 $ v , w \in \S^{d-1} $,
 $ \pi_u x = \langle u , x \rangle $
 and $ I \subset \R $ an interval with $ \rho ( \pi_u^{-1}I ) > 0 $ for  $ u \in \{ v , w \} $.
\begin{enumerate}[label=\textnormal{(\alph*)},leftmargin=*]
\item Suppose $\rho$ is spherical. Then
 $$
  W_1 ( \rho ( x \mid \pi_v x \in I ) , \rho ( x \mid \pi_w x \in I ) ) \lesssim \sin (\angle(v,w) ) \E_{X\sim\rho} [ \|X\| \mid \pi_v X \in I ] .
 $$
 \item
Suppose $\rho$ has an upper bounded density
 and $ \rho ( \pi_v^{-1} I ) \gtrsim |I| $,
 and let $\pi_{u\#}$ denote the push-forward.
Then
 $$
 W_1 ( \pi_{v\#} \rho ( x \mid \pi_v x \in I ) , \pi_{v\#} \rho ( x \mid \pi_w x \in I ) ) \lesssim \sin (\angle(v,w) ) \diam(\supp\rho) .
 $$
 \end{enumerate}
\end{lem}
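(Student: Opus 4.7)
My plan handles the two parts separately, since (a) relies on rotational symmetry while (b) requires a pushforward comparison.

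For part (a), I will let $R \in SO(d)$ be the planar rotation in $\spn\{v,w\}$ that sends $v$ to $w$, so $\|R - I\|_{\mathrm{op}} = 2\sin(\angle(v,w)/2) \lesssim \sin(\angle(v,w))$. Writing $A = P_v^{-1} I$ and $B = P_w^{-1} I$, spherical symmetry gives $R_\# \rho = \rho$, while $Rv = w$ implies $RA = B$ (and hence $\rho(A) = \rho(B)$), so the conditionals satisfy $R_\# \rho(\cdot \mid A) = \rho(\cdot \mid B)$. The coupling $(X, RX)$ with $X \sim \rho(\cdot \mid A)$ then has cost $\E\|X - RX\| \le \|R-I\|_{\mathrm{op}}\, \E[\|X\| \mid P_v X \in I]$, which is the bound in (a).

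For part (b), spherical symmetry is unavailable and the rotated measure $R_\# \rho(\cdot \mid A)$ is no longer $\rho(\cdot \mid B)$. My plan is to interpose $P_{w\#}\rho(\cdot \mid B)$ via triangle inequality,
\[
W_1\bigl(P_{v\#}\rho(\cdot|A),\,P_{v\#}\rho(\cdot|B)\bigr) \le W_1\bigl(P_{v\#}\rho(\cdot|A),\,P_{w\#}\rho(\cdot|B)\bigr) + W_1\bigl(P_{w\#}\rho(\cdot|B),\,P_{v\#}\rho(\cdot|B)\bigr),
\]
where the rightmost summand admits the immediate coupling $(P_w X, P_v X)$, $X \sim \rho(\cdot|B)$, with pointwise cost $|\langle v-w,X\rangle| \le \|v-w\|\diam(\supp\rho)$. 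The remaining summand compares normalized restrictions to $I$ of the 1D pushforwards $\mu := P_{v\#}\rho$ and $\nu := P_{w\#}\rho$; I would apply Kantorovich--Rubinstein against a 1-Lipschitz test $g$ and decompose
\[
\int g\, d\Bigl(\tfrac{\mu|_I}{\mu(I)} - \tfrac{\nu|_I}{\nu(I)}\Bigr) = \tfrac{1}{\mu(I)} \int_I g\, d(\mu-\nu) + \Bigl(\tfrac{1}{\mu(I)} - \tfrac{1}{\nu(I)}\Bigr) \int_I g\, d\nu,
\]
bounding each piece via the global estimate $W_1(\mu,\nu) \le \|v-w\|\diam$ (from the pointwise coupling $(P_v X, P_w X)$ applied on $\R^d$), the interval estimate $|\mu(J) - \nu(J)| \lesssim \|v-w\|\diam$ (since $P_v^{-1}J \triangle P_w^{-1}J$ lies in a slab of width $\|v-w\|\diam$ whose $\rho$-mass is controlled by the density bound in (P2)), and the lower bound $\mu(I) \gtrsim |I|$ from (P2) that absorbs the stray $|I|$ factors.

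The main obstacle is part (b): a naive coupling identifying $X_A = X_B = X$ on $A \cap B$ produces only the overly loose bound $\|v-w\|\diam^2/|I|$, because it charges cost $\diam$ per unit of mass in the symmetric difference. Achieving the target rate $\|v-w\|\diam$ requires testing against Lipschitz functions to exploit cancellation; the subtle step is to extend $g|_I$ to a 1-Lipschitz $\bar g$ on all of $\R$ (e.g., by clamping to its values at the endpoints of $I$), so that $\int \bar g\, d(\mu-\nu)$ is controlled by $W_1(\mu,\nu)$, while the boundary correction $\int_{I^c} \bar g\, d(\mu-\nu)$ is absorbed by half-line estimates $|F_\mu(t) - F_\nu(t)| \lesssim \|v-w\|\diam$ coming again from the density bound.
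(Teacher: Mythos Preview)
Your argument for (a) is correct and essentially identical to the paper's: both use the rotation $R$ with $Rv=w$ and the coupling $(X,RX)$ under $\rho(\,\cdot \mid P_vX\in I)$.

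For (b) your route differs from the paper's, and as written it contains a gap. The paper applies the one-dimensional CDF identity $W_1=\int |M-N|\,dt$ directly to the two conditional pushforwards, bounds the integrand pointwise by $\rho(P_v^{-1}I)^{-1}\rho(I_v\triangle I_w)\lesssim |I|^{-1}\|v-w\|\diam$, and integrates over a set of length $\lesssim|I|$, so the $|I|$'s cancel. In your plan the second triangle-inequality term $W_1(P_{w\#}\rho(\cdot\mid B),P_{v\#}\rho(\cdot\mid B))$ is handled correctly, but for the remaining term you extend the test function $g$ to $\bar g$ on $\R$ and invoke the \emph{global} bound $\bigl|\int_\R \bar g\,d(\mu-\nu)\bigr|\le W_1(\mu,\nu)\lesssim\|v-w\|\diam$. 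After dividing by $\mu(I)$ this contributes $\|v-w\|\diam/\mu(I)\lesssim\|v-w\|\diam/|I|$, and the hypothesis $\mu(I)\gtrsim|I|$ does not absorb this factor --- it is precisely the inequality that produces it. Since $|I|$ can be arbitrarily small (in the paper's application $|I|\asymp r\,2^{-j}$), the target bound $\sin(\angle(v,w))\diam$ does not follow.

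The repair is to drop the global $W_1(\mu,\nu)$ and bound $\int_I g\,d(\mu-\nu)$ directly via your interval estimate: integrating by parts on $I=[a,b]$ with the normalization $g(a)=0$,
\[
\Bigl|\int_I g\,d(\mu-\nu)\Bigr|
\le |g(b)|\,\bigl|(\mu-\nu)(I)\bigr|+\int_a^b\bigl|(\mu-\nu)([a,t])\bigr|\,dt
\lesssim |I|\,\|v-w\|\diam,
\]
using $|(\mu-\nu)(J)|\le\rho(P_v^{-1}J\,\triangle\,P_w^{-1}J)\lesssim\|v-w\|\diam$ for every interval $J$. Now the factor $|I|$ is exactly canceled by $\mu(I)^{-1}$ and the bound follows. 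Once this is in place, the triangle-inequality detour through $P_{w\#}\rho(\cdot\mid B)$ becomes unnecessary: the same integration by parts applies directly to the original pair, and you have essentially reproduced the paper's CDF computation through Kantorovich--Rubinstein duality.
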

\begin{proof}
Let $X$ be a random vector distributed according to $\rho$, and let $ \theta = \angle(v,w) $.
If $\rho$ is spherical, then
$$
 \rho ( x \mid \pi_w x \in I ) = \rho ( R x \mid \pi_v x \in I )  ,
$$
where $R$ is a rotation of angle $\theta$.
Hence,
\begin{align*}
 W_1 ( \rho ( x \mid \pi_v x \in I ) , \rho ( x \mid \pi_w x \in I ) )
 & = \int \| x - R x \| \ d\rho(x\mid \pi_v x \in I) \\
 & = 2 \sin(\theta/2) \E [ \|X\| \mid \pi_v X \in I ] .
\end{align*}
Assume now the absolutely continuous case.
Let
$$
 \mu = \pi_{v\#} \rho ( x \mid \pi_v x \in I ) ,
 \qquad
 \nu = \pi_{v\#} \rho ( x \mid \pi_w x \in I ) .
$$
By virtue of \cite[Teorema 1]{dallaglio} (\cite[Equation (1.2)]{delbarrio1999}),
we have
$$
 W_1 ( \mu , \nu ) = \int_{-\infty}^{+\infty} | M(t) - N(t) | dt
$$
with
$$
 M(t) = \P \{ \pi_v X \le t \mid \pi_v X \in I \} ,
 \qquad
 N(t) = \P \{ \pi_v X \le t \mid \pi_w X \in I \} .
$$
Let $ J' = \pi_v ( \partial \pi_w^{-1} I \cap \supp\rho) $,
where $\partial$ denotes the boundary, and let $ J = I \setminus J' $.
Then
$$
 \int_{-\infty}^{+\infty} | M(t) - N(t) | dt
 \le | J' |
 + \int_{J} | M(t) - N(t) | dt .
$$
The first term is bounded as
$
| J' | \lesssim \sin(\theta) \diam(\supp\rho)
$.
For the second term, define
$$
 I_u = \pi_u^{-1} I, \qquad I_{u,t} = \pi_v^{-1}(-\infty,t] \cap \pi_u^{-1}I \qquad u \in \{ v,w \} ,
$$
and note that $ \rho(I_{u,t}) \le \rho(I_u) $.
Then
\begin{align*}
 | M(t) - N(t) |
 & = \left | \frac{ \rho( I_{v,t} ) }{ \rho(I_v) }
  - \frac{ \rho( I_{w,t} ) }{ \rho(I_w) } \right |
  \le \frac{ \rho( I_{w} ) | \rho( I_{v,t} ) - \rho( I_{w,t} ) | + | \rho( I_w ) - \rho( I_v ) | \rho(I_{w,t}) }{ \rho(I_v) \ \rho(I_w) } \\
  & \le \rho(I_v)^{-1} \left( | \rho( I_{v,t} ) - \rho( I_{w,t} ) | + | \rho( I_w ) - \rho( I_v ) | \right) .
\end{align*}
Denoting $ A \, \triangle \, B = (A \cup B) \setminus (A \cap B) $, we have
\begin{align*}
 & | \rho( I_{v,t} ) - \rho( I_{w,t} ) | \le \rho( I_{v,t} \ \triangle \ I_{w,t} ) \le \rho( I_{v} \ \triangle \ I_{w} ) , \\
 & | \rho( I_w ) - \rho( I_v ) | \le \rho( I_w \ \triangle \ I_v )
 = \rho( I_v \ \triangle \ I_w )
 \lesssim \sin(\theta) \diam(\supp\rho) .
\end{align*}
The claim now follows from the assumption $ \rho(I_v) \gtrsim |I| \ge |J| $.
\end{proof}

\end{document}